\numberwithin{equation}{section}
\pgfplotsset{compat=1.18}
\newtheorem{proposition}{Proposition}[section]
\newtheorem{lemma}[proposition]{Lemma}
\newtheorem{theorem}[proposition]{Theorem}
\newtheorem{notita}[proposition]{Remark}
\newenvironment{remark}{\begin{notita}\rm}{\hfill$\Box$\\[0.5ex]\end{notita}}
\newenvironment{proof}{{\it Proof}. }{\hfill$\Box$\\[0.5ex]}
\DeclareMathOperator*{\argmin}{arg\,min}
\title{Fast and stable computation of highly oscillatory and/or exponentially decaying integrals using a Clenshaw-Curtis product-integration rule}
\author{V\'{\i}ctor Dom\'{\i}nguez\thanks{Dep. Estadística, Informática y Matemáticas. Campus de Tudela 31500 - Tudela, Spain. INAMAT2 Institute for Advanced Materials and Mathematics.
E-mail: victor.dominguez@unavarra.es.
ORCID: \orcidlink{0000-0002-6095-619X}}}
\date{\today}
\begin{document}

\maketitle 

\begin{abstract}
We propose, analyze, and implement a quadrature method for evaluating integrals of the form $\int_0^2 f(s)\exp(zs)\, {\rm d}s$, where $z$ is a complex number with a possibly large negative real part. The integrand may exhibit exponential decay, highly oscillatory behavior, or both simultaneously, making standard quadrature rules computationally expensive. Our approach is based on a Clenshaw-Curtis product-integration rule: the smooth part of the integrand is interpolated using a polynomial at Chebyshev nodes, and the resulting integral is computed exactly. We analyze the convergence of the method with respect to both the number of nodes and the parameter $z$. Additionally, we provide a stable and efficient implementation whose computational cost is essentially independent of $z$ and  {scales linearly with the number of Chebyshev nodes}. Notably, our approach avoids the use of special functions, enhancing its numerical robustness.
\end{abstract}

\paragraph{Keywords:} Exponential and Oscillatory Integrals, Product Clenshaw-Curtis Quadrature, Efficient Algorithm Implementation.

\paragraph{AMS subject classifications:} 65D30, 65D32, 65M70.

\section{Introduction}
In this work, we consider the numerical approximation of the following model integral, expressed in the form we will use throughout this work: for $ \Re z \leq {\mu_0} $, where $ {\mu_0} $ is either negative or has a moderate value (to prevent the integral from becoming excessively large),
\begin{equation}\label{eq:theintegral:0}
 \int_0^{2} f(s) \exp(s z) \, {\rm d}s.
\end{equation}
When $\Re z=0$, these integrals are often referred to in the literature as Fourier-type oscillatory integrals (cf. \cite{Is:2003}) and have received considerable attention in recent years in the broader field of oscillatory integrals involving the general exponential term $\exp({\rm i}\eta g(s))$. We briefly mention several methods for such oscillatory integrals: The Steepest Descent methods \cite{Huybrechs2006, DeHu:2009}, which rewrite the integral as a non-oscillatory contour integral in the complex plane with exponential decay, the Levin methods \cite{MR645668}, which reformulate the problem by reducing it to the solution of a suitable ordinary differential equation, and the Filon-type methods \cite{Is:2003, MR4244341, Xi:2007}, similar to those considered in this paper, based on replacing the function $f$ in the integral by a suitable polynomial and computing this new integral analytically. We refer the reader to the textbook \cite{MR3743075} and references therein for a comprehensive introduction to this topic.

In the case of smooth functions $f$, the idea of using the product integration rule in \eqref{eq:theintegral:0}
\begin{equation}\label{eq:themethod}
\int_0^{2} p_L(s) \exp(s z)\, {\rm d}s,
\end{equation}
where $p_L$ is an interpolating polynomial at $L+1$ well-distributed nodes, making it a natural approach to consider. Given their excellent properties, Chebyshev nodes—defined as the set of extrema of $T_{\ell+1}'(1+\cdot)$ together with the endpoints $\{0,2\}$—are a natural choice. Indeed, again when $\Re z=0$—therefore, the integral becomes oscillatory—the so-called Filon-Clenshaw-Curtis quadrature rules have gained popularity due to their excellent performance for small, moderate, and large values of $z$. We refer to the pioneering work of Piessens et al. \cite{PiPo:1971}, which led to the implementation of a closely related rule in the Fortran library QUADPACK \cite{QUADPACK}. This method can be framed within the broader Chebyshev-based approach to numerical computations, which has remained popular since then. See, for instance, the survey \cite{Pi:2000} or \cite{Tr:2008} for quadrature rules, as well as the extensive use of Chebyshev polynomials in numerical computations within the Chebfun package \cite{Driscoll2014}.

The convergence {analysis} of this rule, along with its fast and stable implementation for purely oscillatory integrals, was previously carried out in \cite{DoGrSm:2010}. There, we showed that this rule exhibits a {\em dual convergence behavior}: We have convergence in $L$, the number of nodes of the rule, as a consequence of the convergence of the interpolant and a Filon-type convergence as $|z| \to \infty$ since the error decays as $\mathcal{O}(|z|^{-2})$ for fixed $L$ for smooth enough functions $f$. The convergence rate with respect to $L$ depends on the regularity of $f$; for smooth functions $f$, it can be superalgebraic or even exponential for analytic integrands (cf. \cite{XiChWa:2010, Xiang:2016} or \cite[Ch.~8]{Trefethen2020}). This convergence in $z$ follows, instead, from an {integration-by-parts argument}, which exploits the fact that the endpoints of the integration interval, $0$ and $2$, serve as nodes in the interpolation problem that defines $p_L$. Further improvement in convergence with respect to $z$ can be achieved by using a modification of this technique involving Hermite interpolation, yielding an $\mathcal{O}(|z|^{-r-1})$ convergence rate if the first $r$ derivatives are interpolated at the endpoints. This modification, which we do not explore in this work, can be found in related contexts (see, for instance, \cite{GaIs:2017, Is:2003,IseNor:2005}).

Integrals that combine both oscillatory and decaying behavior, as in \eqref{eq:theintegral:0} with $z \in \mathbb{C}_-$ (i.e., a non-positive real part), have received comparatively less attention. One possible reason is that when $ \Re z \leq 0 $, the integrand can often be split into two parts: a subinterval where it is exponentially small and can safely be approximated by zero, and another where standard or oscillatory quadrature rules can be applied. This strategy is somewhat similar to those used in the aforementioned Steepest Descent methods, which reduce oscillatory integrals to exponentially decaying contour integrals in the complex plane. However, this approach is not always practical.
 For instance, when the integral \eqref{eq:theintegral} must be computed for a very wide range of $ z $ values, the function $ f $ may need to be evaluated at many different points depending on how $ z $ influences the splitting. Thus, designing robust quadrature rules that remain stable for varying values of $z$ is a worthwhile endeavor.

At first glance, the convergence in $L$ and $z$ of the rule may appear to be an extension of previous work on oscillatory cases, but key subtleties distinguish it from previous approaches. For instance, previous convergence analyzes relied on a change of variables (a cosine transformation) that converted the integral into a $\pi$-periodic form. Since we no longer assume $\Re z = 0$, we encounter substantial differences that require a different convergence {analysis}. Indeed, we are forced to perform the {analysis} using the original variables instead of the cosine change of variables applied in \cite{DoGrSm:2010}.

In analogy with the oscillatory case, the interpolating polynomial can be written in terms of Chebyshev polynomials using FFT techniques, which reduces the applicability of the rule to computing coefficients
\begin{equation}\label{eq:Tn1}
 \int_{0}^2 T_n(s-1)\exp(s z) \, {\rm d}s,
\end{equation}
where $T_n$ denotes the Chebyshev polynomial of degree $n$. Such coefficients satisfy a recurrence relation very similar to the periodic case; however, unlike that case, it proves significantly less stable, rendering the approach we proposed in \cite{DoGrSm:2010} valid only for $|L| \lesssim 2|z|^{1/2}$.
This should be compared to the stability condition $|L|\leq |z|$ when $\Re z=0$, which was proven analytically and verified numerically in the pure oscillatory case. This situation represents a not uncommon mid-range scenario when aiming for a sufficiently accurate approximation of the integral, which requires $L$ to be of moderate size and $|z| \lesssim L^2$. For this reason, among others, the algorithms developed for computing \eqref{eq:Tn1} in the oscillatory case are not directly applicable and require a complete redesign. This is precisely one of the key contributions: our new algorithms can perform these calculations for any $z$ with a very similar cost.

We conclude this introduction by considering possible extensions of the present work. First, we could address the case where $f$ presents singularities. There are two immediate approaches to consider: either factorizing the singularity and incorporating it into the weight functions alongside the complex exponential, or using graded meshes towards the singularities. The first approach requires a precise identification of the singularity, modifying the weights in \eqref{eq:Tn1} accordingly, and developing new algorithms for their computation. Examples of such techniques, applied specifically in the pure oscillatory case, can be found in \cite{Dom:2014,Pi:2000,PIESSENS1992159} for the first approach and in \cite{DoGrKi:2013} for the second one.

On the other hand, we can consider more complex {\em oscillatory-decaying} behavior, i.e., cases where the exponential term takes the alternative form $\exp(g(s)z)$. Let us emphasize that one of the key ideas in \cite{DoGrKi:2013} was to reduce such cases to the setting considered in this paper by performing the change of variables $\tilde{s} = g(s)$.
This transformation can introduce a new singularity in the integrand if $g(s)$ has stationary points, that is, points $s_1$ where the derivative of $g(s)$ vanishes, thus making the integral fall within the framework of singular integrals discussed above. We leave these two extensions for future work.

This paper is structured as follows: In Section 2, we introduce the integral and the proposed product quadrature rule for its approximation, discussing practical implementation aspects once the coefficients are computed. Section 3 provides convergence estimates in terms of $ L $, the number of nodes, and $ |z| $. In Section 4, we focus on the computation of weights that make the quadrature rule usable, providing robust algorithms for this purpose. Finally, in Section 5, we present extensive numerical experiments that demonstrate the method's efficiency and validate our theoretical results. Additionally, we illustrate how these quadrature rules can be incorporated as an intermediate step into more advanced methods, such as Laplace-based numerical solvers for fractional-order evolution problems.

%
% \replaced[id=R2]%
% {A MATLAB implementation of the algorithm is available on GitHub \cite{CleCurRules}, which also includes codes in the {\em training area} folder that, after minor modifications, can be used to replicate some of the numerical experiments in the last section. In addition, a Python implementation is provided in \cite{CleCurRulesPython} with the aim of making the algorithm more accessible and easier to integrate into a broader range of scientific workflows.}%
% {A Matlab implementation of the algorithm is available on  GitHub \cite{CleCurRules}, which also includes codes in the {\em training area} folder that, after minor modifications, can be used to replicate some of the numerical experiments in the last section.}%

{A Matlab implementation of the algorithm is available on  GitHub \cite{CleCurRules}, which also includes codes in the {\em training area} folder that, after minor modifications, can be used to replicate some of the numerical experiments in the last section.}%

\section{Oscillatory and/or exponentially decaying integrals and product Clenshaw-Curtis rules}\label{sec:QR}

In this work, we define
\begin{equation}
\label{eq:theintegral} I_{z}(f):=\int_0^2 \exp(s z)f(s)\,{\rm d}s. \end{equation}
where
\begin{equation}
\label{eq:defCmu} z\in{\mathbb C}_{\mu_0}:= \left\{z\in\mathbb{C} \ :\ \sigma:=\Re z\leq \mu_0\right\}
\end{equation}
where $\mu_0$ is a negative or moderately small positive real number (to prevent numerical blow-up of the integral). Therefore, depending on $z$, the integrand may exhibit oscillatory behavior, exponential decay, or a combination of both.

{We also observe that, by integration by parts,
\begin{equation}
|I_{z}(f)| \le \frac{1}{|z|}\left(|f(0)| + |f(2)| e^{2\sigma}\right)
  + \frac{1}{|z|}\left|\int_{0}^{2} f'(s)e^{ zs}\,{\rm d}s\right|,
\end{equation}
which provides the asymptotic behavior of the integral we wish to approximate. Furthermore, by iterating the same argument and invoking the Sobolev embedding theorem together with a Poincaré-type inequality, we easily see that whenever $f^{(m)} \in L^1(0,2)$ and its derivatives up to order $m-1$ vanish at $0$, then for all $|z| > \zeta_{0} > 0$ there exists a constant $c_{m,\zeta_{0}} > 0$, independent of $f$, such that
\begin{equation}
|I_{z}(f)| \le c_{m,\zeta_{0}}\left(|z|^{-1}e^{2\sigma} + |z|^{-m}\right)\int_{0}^2 |f^{(m)}(s)|\,{\rm d}s.
\end{equation}}

The proposed approximation of $ I_{z}(f) $ relies on the (modified or product) Clenshaw-Curtis method. Thus, for an integer $L \ge 1$, we construct the interpolating polynomial using the (shifted) Chebyshev nodes:
\begin{equation}
\label{eq:theIntegral}
\mathbb{P}_{L} \ni Q_L f, \quad
(Q_L f)\Big(1+\cos \frac{\ell \pi}{L} \Big)
= f\Big(1+\cos \frac{\ell \pi}{L} \Big),
\quad \ell = 0, \ldots, L,
\end{equation}
and use it to obtain the numerical approximation given by
\begin{equation}
\label{eq:therule}
 I_{L,z}(f):=\int_{0}^2 (Q_L f)(s)\exp(s z)\,{\rm d}s\approx I_{z}(f). 
\end{equation}
The interpolant $Q_L f$ can be written as
\[
 Q_Lf{(s)}= \sum_{\ell=0}^L {}'' \alpha_{\ell,L}(f) T_\ell(s-1)\,
\]
where, in the sum above,
\[
T_\ell(x) = \cos(\ell\arccos(x)),
\]
is the Chebyshev polynomial of the first kind and degree $\ell$, and
$\sum''$ means that the first and last terms are halved. Furthermore, the vector of
coefficients $(\alpha_{\ell,L})_{\ell=0}^L$ can be computed by
\begin{equation}\label{eq:alphal}
 \alpha_{\ell,L}(f) = \frac{2}{L} \sum_{j=0}^L {}''
\cos\Big(\frac{j \ell \pi }{L}\Big) f\Big( 1+\cos(j\pi/L)\Big),
\qquad \ell=0,\ldots, L.
\end{equation}
These calculations correspond to the Discrete Cosine Transform of type I (DCT-I) (see, for instance,
\cite[\S 4.7.25]{DaAA:2008}), which can be efficiently computed via FFT techniques in $ \mathcal{O}(L\log L) $ operations.

Then, summarizing,
\begin{subequations}\label{eq:thequadformula}
\begin{equation}\label{eq:thquadformula:a}
 I_{L,z}(f)=\sum_{\ell=0}^L {}'' \alpha_{\ell,L}(f) \omega_\ell (z),
\end{equation}
where
\begin{equation}\label{eq:thequadformula:b}
 \omega_\ell(z):=\int_{0}^2 T_\ell(s-1) \exp(sz)\,{\rm d}s.
\end{equation}
\end{subequations}
We point out that, although these coefficients can theoretically be evaluated analytically, the resulting expressions are highly complex and numerically unstable. A stable algorithm for this problem is postponed and analyzed in Section 4.

\begin{remark}
 We have formulated the model integral on the interval $[0,2]$ for two reasons. First, the presence of the complex exponential forces us either to work with intervals starting at zero or, on arbitrary intervals $[a,b]$ to write the exponential term as $\exp((s+a)z)$. This results in a less convenient notation.

 Second, the interval $[0,2]$ preserves the characteristic length~$2$ of the Chebyshev polynomials $T_\ell$, a feature that will be used repeatedly, particularly in the next section. One could certainly work on $[0,1]$ by using the standard rescaling $T_\ell(2s-1)$ of the Chebyshev polynomials. However, this modification affects several identities involving these polynomials—especially those related to derivatives and changes of variables. Although these adjustments are straightforward, we prefer to keep the identities as close as possible to those satisfied by the classical Chebyshev polynomials, which motivates our choice.
In any case, integrals defined on a general interval $[a,b]$ can, of course, be rewritten via a simple affine transformation into the form considered in this work.
\end{remark}

\section{Error for the product Clenshaw-Curtis rule}

This section aims to derive convergence estimates for the quadrature rule in terms of both the number of nodes, $L$, and the complex parameter, $z$. Clearly, a first estimate can be straightforwardly derived from
\begin{equation}
\label{eq:intr:01}
| I_{z}(f)- {I_{L,z}}(f)|\leq C(\sigma)\|e_L\|_{L^1(0,2)},\quad \text{with } C(\sigma):=\begin{cases}
                           \big|\frac{e^{2\sigma}-1}{\sigma}\big |, &\sigma\neq 0,\\
                               2,& \sigma=0,
                                                           \end{cases}
\end{equation}
where
\begin{equation}\label{eq:def:eL}
 e_L:= f-Q_{L}f
\end{equation}
is the interpolation error. Hence, using Cauchy-Schwarz inequalities, convergence in $L^2$ weighted norms for the interpolation error suffices to derive error estimates for the quadrature rule. We then establish key convergence results for the interpolant and its first two derivatives in appropriate $L^2$-weighted and $L^\infty$ norms. While some of these results—particularly those related to Chebyshev polynomial properties—are well-documented in the literature, we include the proofs here for completeness.
Let
\begin{equation}\label{eq:21}
 \varrho_0(s):=(s(2-s))^{-1/2},
\end{equation}
and define the associated weighted $L^2$ Hilbert space
\[
L^2_{\varrho_0} := \left\{ f \in L^1(0,2) \; : \; \|f\|_{\varrho_0} < \infty \right\}, \quad
  \|f\|_{\varrho_0}^2 := \int_0^2 |f(s)|^2\varrho_0(s)\,{\rm d}s,
\]
endowed with the inner product naturally defined by the corresponding weighted integral.

An orthonormal basis for this space is given by (see, for instance, \cite[\S 1.5]{Riv:1990}, \cite[Ch. 22]{AbrSte:1964}, or \cite[\S 18.3]{NIST:DLMF})
\[
\Big\{ \frac{1}{\sqrt{\pi}}, \sqrt{\frac{2}{\pi}} \widetilde{T}_1(s), \sqrt{\frac{2}{\pi}} \widetilde{T}_2(s), \ldots \Big\},
\]
where $ \widetilde{T}_n(s) $ is the shifted Chebyshev polynomial of the first kind and degree $n$:
\[
\widetilde{T}_n(s) := \cos\left( n \arccos(s - 1) \right),\quad s \in[0,2].
\]
Thus, any function $f\in L^2_{\varrho_0}$ in this space can be expanded as
\[
f = \frac{1}{2} \widehat{f}(0) + \sum_{n=1}^\infty \widehat{f}(n) \widetilde{T}_n,
\]
where
\[
\widehat{f}(n) := \frac{2}{\pi} \int_0^2 f(s) \widetilde{T}_n(s) \varrho_0(s) \, \mathrm{d}s, \quad n = 0, 1, \ldots
\]
with convergence in $ L^2_{\varrho_0} $. Additionally, the Bessel identity holds
\[
\|f\|_{\varrho_0}^2 = \frac{\pi}{2} \Big[ \frac{1}{2} |\widehat{f}(0)|^2 + \sum_{n=1}^\infty |\widehat{f}(n)|^2 \Big].
\]
Our goal is to extend this relation to the closely related $L^2$-weighted integral and Fourier-coefficient weighted $\ell^2$-based norms:
\begin{equation}
\begin{aligned}
  \|f\|_{\varrho_{m}}^2\ &:=\ \int_0^2 |f(s)|^2\varrho_{m}(s)\,{\rm d}s,
 \quad \text{where } \varrho_{m} (s)\ :=\ (s(2-s))^{m-1/2}, \\
 \|f\|_{m}^2\ &:=\ \frac{\pi}{2} \Big[ |\widehat{f}(0)|^2 + \sum_{n=1}^\infty |n|^{2m} |\widehat{f}(n)|^2 \Big].
\end{aligned}
\end{equation}
The following lemma establishes the first step towards the required relation:
\begin{lemma}
For $n \ge m+1 \ge 1$, and for $f$ sufficiently smooth, the following identity holds:
\[
  \int_{0}^2 f'(s) \widetilde{T}_{n}^{(m+1)}(s) \varrho_{m+1}(s) \,{\rm d}s =  (n^2 - m^2)
  \int_{0}^2 f(s) \widetilde{T}_{n}^{(m)}(s) \varrho_{m}(s) \,{\rm d}s.
\]
\end{lemma}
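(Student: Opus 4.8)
The plan is to reduce the identity to the differential equation satisfied by the derivatives of the Chebyshev polynomial, and to dispose of the weight factors by a single integration by parts. Since $\widetilde{T}_n(s) = T_n(s-1)$ and $\varrho_m(s) = (s(2-s))^{m-1/2} = (1-(s-1)^2)^{m-1/2}$, it is convenient to substitute $x = s-1$, so that the interval $[0,2]$ becomes $[-1,1]$, the weight becomes $(1-x^2)^{m-1/2}$, and every derivative of $\widetilde{T}_n$ turns into the corresponding derivative of the classical Chebyshev polynomial $T_n$. Writing $g(x) := f(x+1)$ (so that $f'(s)=g'(x)$), the claim is equivalent to
\[
\int_{-1}^{1} g'(x)\, T_n^{(m+1)}(x)\,(1-x^2)^{m+1/2}\,{\rm d}x
= (n^2-m^2)\int_{-1}^{1} g(x)\, T_n^{(m)}(x)\,(1-x^2)^{m-1/2}\,{\rm d}x.
\]

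First I would integrate the left-hand side by parts, integrating the factor $g'$ to $g$ and differentiating the bracket $W(x):=T_n^{(m+1)}(x)(1-x^2)^{m+1/2}$. Because $(1-x^2)^{m+1/2}$ vanishes at $x=\pm 1$ for every $m\ge 0$, while $g$ and $T_n^{(m+1)}$ remain bounded there, the boundary term $[g\,W]_{-1}^{1}$ drops out and the left-hand side becomes $-\int_{-1}^{1} g(x)\,W'(x)\,{\rm d}x$. Carrying out the differentiation and factoring out $(1-x^2)^{m-1/2}$ gives
\[
W'(x) = (1-x^2)^{m-1/2}\Big[(1-x^2)\,T_n^{(m+2)}(x) - (2m+1)x\,T_n^{(m+1)}(x)\Big],
\]
so that, comparing with the target right-hand side, the whole identity reduces to the pointwise relation
\begin{equation}
(1-x^2)\,T_n^{(m+2)}(x) - (2m+1)x\,T_n^{(m+1)}(x) + (n^2-m^2)\,T_n^{(m)}(x) = 0. \tag{$\star$}
\end{equation}

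Finally I would establish $(\star)$ by differentiating the Chebyshev equation $(1-x^2)T_n'' - x T_n' + n^2 T_n = 0$ exactly $m$ times via the Leibniz rule. Since $1-x^2$ and $x$ are polynomials of degree $\le 2$, only the first few Leibniz terms survive: the term $(1-x^2)T_n''$ contributes $(1-x^2)T_n^{(m+2)} - 2mx\,T_n^{(m+1)} - m(m-1)T_n^{(m)}$, the term $-xT_n'$ contributes $-x\,T_n^{(m+1)} - m\,T_n^{(m)}$, and the last term gives $n^2 T_n^{(m)}$; collecting the coefficient of $T_n^{(m)}$ as $-m(m-1)-m+n^2 = n^2-m^2$ reproduces $(\star)$ exactly, with the case $m=0$ recovering the Chebyshev equation as a sanity check. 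The hypothesis $n\ge m+1$ is precisely what makes the statement nontrivial, guaranteeing $T_n^{(m+1)}\not\equiv 0$ and $n^2-m^2\neq 0$; for $n\le m$ both sides vanish identically. I do not anticipate a genuine obstacle: the only points demanding care are the vanishing of the boundary term, which hinges on $m+\tfrac12>0$, and the correct bookkeeping of the binomial coefficients in the Leibniz step.
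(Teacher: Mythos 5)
Your proof is correct and follows essentially the same route as the paper's: both rest on differentiating the Chebyshev equation $m$ times (Leibniz) to get $(1-x^2)T_n^{(m+2)}-(2m+1)xT_n^{(m+1)}+(n^2-m^2)T_n^{(m)}=0$, recognizing that multiplication by the weight turns this into the exact-derivative identity $\bigl((1-x^2)^{m+1/2}T_n^{(m+1)}\bigr)'=-(n^2-m^2)(1-x^2)^{m-1/2}T_n^{(m)}$, and then integrating by parts with the boundary terms killed by the vanishing of the weight at the endpoints. The only difference is cosmetic ordering (you integrate by parts first and verify the pointwise ODE afterwards, with the Leibniz bookkeeping written out explicitly), which the paper leaves implicit.
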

\begin{proof}
The Chebyshev polynomial $T_n$ satisfies the differential equation
\[
 (1 - x^2) y''(x) - x y'(x) + n^2 y(x) = 0,
\]
see \cite[\S 22.6]{AbrSte:1964} or \cite[\S 18.8]{NIST:DLMF}. Differentiating this equation $m$ times, we find that $T_n^{(m)}$ satisfies the closely related differential equation
\[
  (1 - x^2) y''(x) - (2m+1)x y'(x) + (n^2 - m^2) y(x) = 0.
\]
Multiplying both sides by $(1 - x^2)^{m-1/2}$ yields
\[
   \big( (1 - x^2)^{m+1/2} T_n^{(m+1)}(x) \big)'  = -(n^2 - m^2)(1 - x^2)^{m-1/2} T_n^{(m)}(x).
\]
Hence,
\[
\big(\varrho_{m+1}(s) \widetilde{T}_n^{(m+1)}(s)\big)' =   -(n^2 - m^2) \varrho_{m}(s) \widetilde{T}_n^{(m)}(s).
\]
The result follows readily from this property  by applying integration by parts to the first integral in the lemma's statement, noting that $\varrho_{m}(0) = \varrho_{m}(2) = 0$ for $m \ge 1$.
\end{proof}

We point out that the central argument in the preceding lemma is a special case of the recurrence relation for derivatives of Jacobi polynomials   (cf. \cite[eq. 18.9.15]{NIST:DLMF}). However, we prefer to provide a proof here for the sake of completeness.

\begin{proposition}\label{prop:Cheby}
For any positive integer $m$,
\[
  \|f^{(m)}\| _{\varrho_{m}}\leq  \|f-S_{m-1}f\| _{m}\leq  \Bigg[\frac{2\,m^{2m}}{(2m)!}\Bigg]^{1/2} \|f^{(m)}\| _{\varrho_{m}},
\]
where
\begin{equation}\label{eq:defSm}
 S_{m} f:=\frac{1}{2}\widehat{f}(0)+\sum_{n=1}^m \widehat{f}(n) \widetilde{T}_n
\end{equation}
is the $m$th Fourier sum in the basis $\{\widetilde{T}_n\}_{n\ge 0}$.
\end{proposition}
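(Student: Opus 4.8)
The plan is to reduce both norms to weighted $\ell^2$ sums of the Chebyshev coefficients $\widehat{f}(n)$ and then compare the resulting weight sequences term by term. Write $\langle g,h\rangle_k := \int_0^2 g(s)h(s)\varrho_k(s)\,{\rm d}s$, so that $\langle f,\widetilde{T}_n\rangle_0 = \tfrac{\pi}{2}\widehat{f}(n)$ and $\|\widetilde{T}_n\|_{\varrho_0}^2 = \tfrac{\pi}{2}$ for $n\ge 1$. First I would iterate the preceding lemma: applying it successively to $f^{(m-1)}, f^{(m-2)},\dots,f$ (with the lemma's index running from $m-1$ down to $0$) telescopes into the identity
\[
  \langle f^{(m)},\widetilde{T}_n^{(m)}\rangle_m \;=\; P_m(n)\,\langle f,\widetilde{T}_n\rangle_0, \qquad P_m(n):=\prod_{j=0}^{m-1}(n^2-j^2),
\]
valid for every $n\ge m$, since each stage requires $n$ to exceed the current index and $n\ge m$ guarantees this throughout. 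Taking $f=\widetilde{T}_k$ in this identity and using the orthogonality of the $\widetilde{T}_n$ shows that $\{\widetilde{T}_n^{(m)}\}_{n\ge m}$ is an orthogonal family in $L^2_{\varrho_m}$ with $\|\widetilde{T}_n^{(m)}\|_{\varrho_m}^2 = \tfrac{\pi}{2}P_m(n)$; since $\widetilde{T}_n^{(m)}$ has exact degree $n-m$, these span all polynomials and hence form a complete orthogonal basis of $L^2_{\varrho_m}$.

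The key consequence is a clean cancellation: expanding $f^{(m)}$ in this basis, its coefficients are
\[
  \frac{\langle f^{(m)},\widetilde{T}_n^{(m)}\rangle_m}{\|\widetilde{T}_n^{(m)}\|_{\varrho_m}^2}
  = \frac{P_m(n)\,\tfrac{\pi}{2}\widehat{f}(n)}{\tfrac{\pi}{2} P_m(n)} = \widehat{f}(n),
\]
so Parseval gives $\|f^{(m)}\|_{\varrho_m}^2 = \tfrac{\pi}{2}\sum_{n\ge m}P_m(n)\,|\widehat{f}(n)|^2$. On the other hand, since $S_{m-1}f$ is a polynomial of degree $m-1$ we have $(f-S_{m-1}f)^{(m)}=f^{(m)}$, while $\widehat{(f-S_{m-1}f)}(n)$ equals $\widehat{f}(n)$ for $n\ge m$ and vanishes otherwise; thus $\|f-S_{m-1}f\|_m^2 = \tfrac{\pi}{2}\sum_{n\ge m}n^{2m}\,|\widehat{f}(n)|^2$. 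Both quantities of interest are therefore weighted $\ell^2$ sums over $n\ge m$ with weights $P_m(n)$ and $n^{2m}$, and the proposition follows once I bound the ratio $n^{2m}/P_m(n)$ above and below uniformly in $n\ge m$.

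For the comparison, note that $n^{2m}/P_m(n)=\prod_{j=0}^{m-1} n^2/(n^2-j^2)\ge 1$, which immediately yields $P_m(n)\le n^{2m}$ and hence the lower bound $\|f^{(m)}\|_{\varrho_m}\le\|f-S_{m-1}f\|_m$. For the upper bound I would observe that each factor $n^2/(n^2-j^2)$ is decreasing in $n$, so the ratio is maximal at the smallest admissible index $n=m$; evaluating $P_m(m)=\prod_{j=0}^{m-1}(m-j)(m+j)=m!\,(2m-1)!/(m-1)! = m\,(2m-1)!$ gives $\max_{n\ge m} n^{2m}/P_m(n)=m^{2m}/(m\,(2m-1)!)=2m^{2m}/(2m)!$, precisely the stated constant, and substituting it term by term completes the proof.

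I expect the main obstacle to be organizational rather than computational: correctly tracking the index ranges when iterating the lemma (confirming that $n\ge m$ legitimizes every stage), and, above all, verifying the completeness of $\{\widetilde{T}_n^{(m)}\}_{n\ge m}$ in $L^2_{\varrho_m}$—these are the shifted ultraspherical/Gegenbauer polynomials, and one must check they generate all polynomials so that Parseval applies with no missing components. The sharpness of the constant then hinges on the elementary but essential factorial identity $P_m(m)=m\,(2m-1)!$, which must reproduce $2m^{2m}/(2m)!$ exactly.
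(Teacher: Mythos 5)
Your proof is correct and follows essentially the same route as the paper's: iterate the lemma to get $\langle f^{(m)},\widetilde{T}_n^{(m)}\rangle_{\varrho_m}=\prod_{j=0}^{m-1}(n^2-j^2)\cdot\tfrac{\pi}{2}\widehat{f}(n)$, recognize $\{\widetilde{T}_n^{(m)}\}_{n\ge m}$ as an orthogonal basis of $L^2_{\varrho_m}$, apply Parseval, and compare the weights $\prod_{j=0}^{m-1}(n^2-j^2)$ and $n^{2m}$ term by term, with the extremal ratio at $n=m$ giving the constant $2m^{2m}/(2m)!$. Your treatment is in fact slightly more careful than the paper's on the completeness of the derived basis and on the monotonicity argument locating the extremal index.
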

\begin{proof}
For $m=0$, there is nothing to prove. Assume then that $m\ge 1$.
By successively applying the previous lemma, we obtain
\[
\int_{0}^2 f^{(m)}(s) \widetilde{T}_{n}^{(m)}(s)\varrho_{m}(s)\,{\rm d}s = c_{m,n}^2 \widehat{f}(n),\quad \forall n\ge m,
\]
where
\[
 c_{m,n}:= \bigg[\prod_{j=0}^{m-1} (n^2-j^2)\bigg]^{1/2} = \bigg[
 \frac{(n+m-1)!n}{(n-m)!}\bigg]^{1/2}.
\]
In particular, it follows that $\{c_{m,n}^{-1}\sqrt{\frac{2}{\pi}}\widetilde{T}^{(m)}_n\}_{n\ge m}$ forms an orthonormal basis of $L^2_{\varrho_{m}}$.

Then,
\begin{eqnarray}
\|f^{(m)}\|_{\varrho_{m}}^2 &=& \frac{\pi}{2}\sum_{n=m}^\infty c_{m,n}^{-2}\bigg[ \int_{0}^2
f^{(m)}(s) \widetilde{T}^{(m)}_{n}(s) \varrho_{m}(s)\,{\rm d}s\bigg]^2 = \frac{\pi}{2} \sum_{n=m}^\infty c_{m,n}^{2} |\widehat{f}(n)|^2. \label{eq:03:prop:Cheby}
\end{eqnarray}
The result now follows from the inequalities
\[
 1\ge  n^{-2m}c_{m,n}^ 2\ge  n^{-2m}c_{m,m}^ 2 = \frac{(2m)!}{2\,m^{2m}}.
\]
\end{proof}

For the next result, recall that we define the interpolation error as $e_L = f - Q_Lf$, as introduced in \eqref{eq:def:eL}.

\begin{proposition}\label{prop:est}
For all $m\ge 1$, there exists a constant $C_{m}$ such that for all $L\ge m-1$ it holds
\begin{equation}\label{eq:est02}
\|e_L\|_{\varrho_0}+L^{-1}\|e'_L\|_{\varrho_1}\leq C_{m} L^{-m}\|f^{(m)}\|_{\varrho_{m}}.
\end{equation}
Furthermore, for any $m\ge m_0>  5/2$, and again for any $L\ge m-1$, there exists a constant $C_{m_0,m}>0$ such that
\begin{equation}\label{eq:01:lemma:02:CleCur}
\|e'_L\|_{L^\infty(0,2)}\leq C_{m_0,m} L^{m_0-m}\|f^{(m)}\|_{\varrho_{m}}
\end{equation}
and
\begin{equation}\label{eq:02:lemma:02:CleCur}
 \|e''_L\varrho_1\|_{L^\infty(0,2)}\leq C_{m_0,m} L^{m_0-m}\|f^{(m+1)}\|_{\varrho_{m+1}}.
\end{equation}
\end{proposition}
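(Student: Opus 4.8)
The plan is to reduce every quantity on the left-hand side to a weighted $\ell^2$ (or $\ell^1$) sum of the Chebyshev coefficients $\widehat{e}_L(n)$ of the error, and then to control those coefficients through the aliasing formula for interpolation at the Gauss--Lobatto nodes. The starting point is that $\widetilde{T}_n$ and $\widetilde{T}_{n'}$ agree at all nodes $1+\cos(\ell\pi/L)$ precisely when $n\equiv\pm n'\pmod{2L}$. Writing $n'\sim n$ for this relation, one finds $\widehat{e}_L(n)=\widehat{f}(n)$ for $n>L$, while for $0\le n\le L$ the coefficient is $\widehat{e}_L(n)=-\sum_{n'\sim n,\,n'>L}\widehat{f}(n')$, collecting exactly the high modes that fold onto $n$. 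Two elementary facts drive the argument: first, the sets $\{n'>L:\ n'\sim n\}$, $n=0,\dots,L$, partition $\{n'>L\}$; second, since every such $n'$ has the form $2kL\pm n$ with $k\ge1$, one has the aliasing-cost bound $\sum_{n'\sim n,\,n'>L}(n')^{-2k}\le C_k L^{-2k}$ for any $k\ge1$, by comparison with $L^{-2k}\sum_j j^{-2k}$. Combined with Cauchy--Schwarz in the form $|\widehat{e}_L(n)|^2\le\big(\sum_{n'\sim n}(n')^{-2k}\big)\big(\sum_{n'\sim n}(n')^{2k}|\widehat{f}(n')|^2\big)$, this lets me trade each factor $L^{-1}$ against one order of smoothness, uniformly in $n$.

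For the $L^2$-weighted estimate \eqref{eq:est02} I would first record the Parseval identities $\|e_L\|_{\varrho_0}^2=\tfrac{\pi}{2}\big(\tfrac12|\widehat{e}_L(0)|^2+\sum_{n\ge1}|\widehat{e}_L(n)|^2\big)$ and $\|e_L'\|_{\varrho_1}^2=\tfrac{\pi}{2}\sum_{n\ge1}n^2|\widehat{e}_L(n)|^2$, the latter being immediate from the orthonormal system $\{n^{-1}\sqrt{2/\pi}\,\widetilde{T}_n'\}_{n\ge1}$ of $L^2_{\varrho_1}$ exhibited in the proof of Proposition~\ref{prop:Cheby} (the case $m=1$, $c_{1,n}=n$). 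Splitting each sum into the tail $n>L$ and the aliased part $n\le L$, using $n^{-2m}\le L^{-2m}$ on the tail, the aliasing-cost bound (at order $k=m$) on the folded part, and $n^2\le L^2$ in the $\varrho_1$ sum, both contributions collapse after re-summing over the partition to $CL^{-2m}\sum_{n>L}n^{2m}|\widehat{f}(n)|^2$ and $CL^{2-2m}\sum_{n>L}n^{2m}|\widehat{f}(n)|^2$ respectively. The final step is Proposition~\ref{prop:Cheby}, which gives $\sum_{n\ge m}n^{2m}|\widehat{f}(n)|^2=\tfrac{2}{\pi}\|f-S_{m-1}f\|_m^2\le C_m\|f^{(m)}\|_{\varrho_m}^2$; since $L\ge m-1$ forces $\{n>L\}\subseteq\{n\ge m\}$, this yields \eqref{eq:est02}.

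The two $L^\infty$ bounds follow the same scheme, now measured in $\ell^1$. For \eqref{eq:01:lemma:02:CleCur} I use $\|e_L'\|_{L^\infty}\le\sum_{n\ge1}|\widehat{e}_L(n)|\,\|\widetilde{T}_n'\|_{L^\infty}$ together with $T_n'=nU_{n-1}$ (so $\|\widetilde{T}_n'\|_{L^\infty}=n^2$), split $n^2=n^{2-m_0}\cdot n^{m_0}$, and apply Cauchy--Schwarz: the factor $\sum_n n^{2(2-m_0)}$ converges exactly when $m_0>5/2$, which is where that threshold enters. The remaining factor $\sum_n n^{2m_0}|\widehat{e}_L(n)|^2$ is handled by the identical tail/aliasing decomposition with weight $n^{2m_0}$ (using $n^{2m_0}\le L^{2m_0}$ on the folded part and $m_0\le m$ on the tail), producing $CL^{2(m_0-m)}\sum_{n>L}n^{2m}|\widehat{f}(n)|^2$ and hence \eqref{eq:01:lemma:02:CleCur} after Proposition~\ref{prop:Cheby}.

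The delicate point, and the one I expect to be the main obstacle, is \eqref{eq:02:lemma:02:CleCur}. The naive estimate $\|e_L''\varrho_1\|_{L^\infty}\le\sum_n|\widehat{e}_L(n)|\,\|\widetilde{T}_n''\|_{L^\infty}$ is useless, since Markov's inequality gives $\|\widetilde{T}_n''\|_{L^\infty}\sim n^4/3$ (attained at the endpoints) and the split $n^4=n^{3-m_0}\cdot n^{m_0+1}$ would demand $m_0>7/2$. The fix is a \emph{weighted} Markov-type bound $\|\varrho_1\widetilde{T}_n''\|_{L^\infty}\le Cn^3$, exploiting that $\varrho_1(s)=\sin\theta$ (with $s=1+\cos\theta$) vanishes at the endpoints precisely where $T_n''$ peaks. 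To prove it I would use $T_n''(\cos\theta)=nU_{n-1}'(\cos\theta)$ to write $\varrho_1\widetilde{T}_n''=nF_n(\theta)/\sin^2\theta$ with $F_n(\theta)=\cos\theta\sin(n\theta)-n\cos(n\theta)\sin\theta$; a short computation gives $F_n'(\theta)=(n^2-1)\sin\theta\sin(n\theta)$ and $F_n(0)=F_n(\pi)=0$, whence, integrating $|\sin t\sin nt|\le|\sin t|\le\operatorname{dist}(t,\{0,\pi\})$ from the nearer endpoint, $|F_n(\theta)|\le\tfrac12(n^2-1)\operatorname{dist}(\theta,\{0,\pi\})^2$, while Jordan's inequality gives $\sin^2\theta\ge(2/\pi)^2\operatorname{dist}(\theta,\{0,\pi\})^2$. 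This yields $\|\varrho_1\widetilde{T}_n''\|_{L^\infty}\le\tfrac{\pi^2}{8}n(n^2-1)\le Cn^3$. With the $n^3$ weight, the split $n^3=n^{2-m_0}\cdot n^{m_0+1}$ again converges thanks to $m_0>5/2$, and the tail/aliasing decomposition with weight $n^{2(m_0+1)}$ (aliasing cost at order $k=m+1$), followed by Proposition~\ref{prop:Cheby} applied at order $m+1$ to bound $\sum_{n>L}n^{2(m+1)}|\widehat{f}(n)|^2$ by $C\|f^{(m+1)}\|_{\varrho_{m+1}}^2$, gives \eqref{eq:02:lemma:02:CleCur}. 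The only bookkeeping subtlety is an off-by-one in the index ($L\ge m-1$ versus the $L\ge m$ needed to guarantee $\{n>L\}\subseteq\{n\ge m+1\}$ here), which I would absorb by treating the single mode $n=m$ separately or by a harmless adjustment of the constant.
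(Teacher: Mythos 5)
Your proof is correct in substance, but it follows a genuinely different route from the paper's. Where you control the error coefficients $\widehat{e}_L(n)$ directly through the aliasing identity for interpolation at the Chebyshev extreme points (the folding $n'\equiv\pm n\pmod{2L}$, the partition property, and the aliasing-cost bound $\sum_{n'\sim n,\,n'>L}(n')^{-2k}\le C_kL^{-2k}$), the paper instead transfers the problem to the periodic setting via $f_\#(\theta)=f(1+\cos\theta)$, identifies $\|\cdot\|_m$ with the periodic Sobolev norm $H^m_\#$, and invokes the known error estimate for trigonometric interpolation on a uniform grid \eqref{eq:Qlf:02} (Saranen--Vainikko), together with the invariance $Q_Lf-f=Q_L(f-S_Lf)-(f-S_Lf)$. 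Your version is self-contained -- you essentially reprove the cited interpolation estimate by hand in this special case -- at the cost of the folding bookkeeping; the paper's is shorter but leans on an external theorem. Both arguments then use Proposition~\ref{prop:Cheby} in the same way to convert $\sum_{n>L}n^{2m}|\widehat f(n)|^2$ into $\|f^{(m)}\|_{\varrho_m}^2$, both hit the same $m_0>5/2$ threshold from the $\ell^1$--$\ell^2$ Cauchy--Schwarz step, and your weighted Markov bound $\|\varrho_1\widetilde T_n''\|_{L^\infty}\le\tfrac{\pi^2}{8}n^3$ (proved via $F_n'=(n^2-1)\sin\theta\sin n\theta$) plays exactly the role of the paper's bound $\|\varrho_1\widetilde T_j''\|_{L^\infty}\le\tfrac12 j^3$, which the paper obtains instead by differentiating $\sin(j\theta)/\sin\theta$ and using \eqref{exp:Tj}.

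One point needs correcting, and it is precisely the ``bookkeeping subtlety'' you flagged at the end. Your instinct that something is off with the range $L\ge m-1$ in \eqref{eq:02:lemma:02:CleCur} is right, but your proposed fix -- treating the single mode $n=m$ separately or adjusting the constant -- cannot work: the coefficient $\widehat f(m)$ is annihilated by $S_m$ and is therefore completely invisible to $\|f^{(m+1)}\|_{\varrho_{m+1}}$, so no constant can absorb it. Indeed, for $L=m-1$ the estimate \eqref{eq:02:lemma:02:CleCur} is genuinely false: take $m=3$, $L=2$, $f=\widetilde T_3$. Then $f^{(4)}\equiv 0$, so the right-hand side vanishes, while $Q_2\widetilde T_3=\widetilde T_1$ (aliasing on the three-point grid), hence
\begin{equation*}
e_L''(s)=\widetilde T_3''(s)=24(s-1),\qquad \|e_L''\varrho_1\|_{L^\infty(0,2)}=12>0 .
\end{equation*}
The correct resolution is simply to require $L\ge m$ for \eqref{eq:02:lemma:02:CleCur} (the natural condition, since this estimate lives at regularity order $m+1$); with that range your argument goes through verbatim, as the tail $\{n>L\}$ is then contained in $\{n\ge m+1\}$ and Proposition~\ref{prop:Cheby} applies at order $m+1$. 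Note that this is a defect of the paper's statement rather than of your proof: the paper's own argument for \eqref{eq:02:lemma:02:CleCur} also needs $L\ge m$ at the step where the interpolation estimate and Proposition~\ref{prop:Cheby} are applied with $m+1$ in place of $m$, so both proofs establish the result on the same (corrected) range.
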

\begin{proof}
In this proof only, we denote
\[
 f_\# = f(1+\cos\theta).
\]
We first observe that since $(\widetilde{T}_j)_{\#}(\theta)=\cos j\theta$ it holds
\begin{equation}\label{eq:equality_norms}
 \|f\|_{m} = \|f_{\#}\|_{H_\#^ m},
\end{equation}
where
\[
 \|\varphi\|_{H_\#^r}^2=   \pi
 \sum_{n=-\infty}^ \infty \max\{1,|n|^{2r}\}|\widetilde\varphi(n)|^ 2,\quad
 \widetilde\varphi(n) =\frac{1}{2\pi}\int_{-\pi}^\pi \varphi(\theta)\exp(-{\rm i}n \theta)\,{\rm d}\theta
\]
is the periodic Sobolev norm of order $r$.

Then, by Proposition \ref{prop:Cheby}, for $k=0,1$, it holds that
\begin{equation}\label{eq:Qlf}
\begin{aligned}
  \|e^{(k)}_L\|_{\rho_k} &= \| (I-S_{k-1})(Q_{L} f  -f) \|_{k}\leq \| Q_{L} f  -f  \|_{k}
  \\
  &=  \| Q_{L} (f-S_Lf)  -(f-S_Lf)  \|_{k} =  \|Q_{L,\#} (f-S_Lf)_\# -(f-S_Lf)_\#\|_{H_\#^ k},
  \end{aligned}
\end{equation}
where
\[
 \begin{array}{l}
  Q_{L,\#} \varphi\in {\rm span}\{\cos r \theta,\sin s\theta\   :\  r=0,\ldots,L,\ s=1,\ldots,L-1\},   \\[1.2ex]
   (Q_{L,\#} \varphi) (j\pi/L) = \varphi (j\pi/L),\quad j\in\mathbb{Z},
   \end{array}
\]
is the trigonometric interpolant on the periodic uniform grid $\{j\pi/L\}$. (Notice that $g_\#$ is an even function for any $g$).

From \cite[Th 8.3.1]{SaVa:2002}, for any pair $p\ge q\ge 0$ with $p>1/2$, there exists a constant $C_{p,q}>0$ such that
\begin{equation}\label{eq:Qlf:02}
\|Q_{L,\#} \varphi-\varphi\|_{{H_\#^ q}}\leq C_{p,q}  L^{q-p}\|\varphi\|_{{H_\#^p}}.
\end{equation}
Applying estimate \eqref{eq:Qlf:02} to $\varphi:= (f-S_Lf)_\#$ with $q=k\in\{ 0,1\}$ and $p=m\leq L+1$, and using Proposition \ref{prop:Cheby} again, we obtain
\begin{equation}
 \begin{aligned}
\|e^{(k)}_L\|_{\varrho_k}
\leq\ &  C_{k,m} L^{k-m}\|(f-S_L f)_\#\|_{{H_\#^ m}}
= C_{k,m} L^{k-m}\|f-S_L f\|_{m}\\
\leq\ &   C _{k,m} L^{k-m}\| f-S_{m-1}f  \|_{m}\leq C'_{k,m} L^{k-m}\|f^{(m)}\|_{\varrho_{m}},
\end{aligned}
\end{equation}
which completes the proof of the first and second estimate in \eqref{eq:est02}.

On the other hand, recalling that $T_j(\cos\theta) = \cos (j\theta)$, we have
\begin{equation}\label{exp:Tj}
\begin{aligned}
\frac{1}{j} T_j'(\cos \theta) &= \frac{\sin j\theta}{\sin\theta} =  e^{{\rm i} (j-1) \theta} \:\frac{1-e^{-2{\rm i} j\theta  }}{1-e^{-2{\rm i}  \theta} }
 = \sum_{\ell =0}^{j-1} \cos((j-2\ell-1)\theta),
\end{aligned}
\end{equation}
which implies by taking $\theta =0$ that
\[
 \|T_j'\|_{L^\infty(-1,1)}\leq |T_j'(\pm 1)| = j^2.
\]
Estimate \eqref{eq:01:lemma:02:CleCur} can be then shown to be consequence of the following argument: for $m_0\in (5/2,3]$
\begin{eqnarray*}
 \|e'_L\|_{L^\infty(0,2)} &\leq& \sum_{j =1}^\infty |\widehat{e}_L(j)|  \|T'_j\|_{L^\infty(-1,1)}\leq
  \sum_{j =1}^\infty |j|^2 |\widehat{e}_L(j)|  \\
  &\leq&  \bigg[\sum_{j=1}^\infty \frac{1}{|j|^{2m_0-4}}\bigg]^{1/2}
  \bigg[\sum_{j=1}^\infty |j|^{2m_0} |\widehat{e}_L(j)|^2\bigg]^{1/2} \\
  &\leq& C_{m_0}\|e_L \|_{m_0}=C_{m_0}\|Q_{L}  (f-S_L f)-  (f-S_Lf) \|_{m_0}\\
  &=&  C_{m_0}\|Q_{L,\#}  (f-S_L f)_\#-  (f-S_L f)_{\#}\|_{H_{\#}^{m_0}}
   \leq  C_{m_0,m} L^{m-m_0}\|   f-S_L f \|_{m}\\
   &\leq& C'_{m_0,m} L^{m-m_0}\|   f^{(m)}\|_{\rho_m}
\end{eqnarray*}
where we have applied sequentially Cauchy-Schwarz inequality, \eqref{eq:equality_norms}, estimate \eqref{eq:Qlf:02} and Proposition \ref{prop:Cheby} (using again that $L\geq m-1$) to bound the last term.

The last estimate \eqref{eq:02:lemma:02:CleCur} follows from a very similar argument, using now
\[
  \|e''_L\varrho_1\|_{L^\infty(0,2)}  \leq \sum_{j =2}^\infty |\widehat{e}_L(j)|  \|\varrho_1\widetilde{T}''_j\|_{L^\infty(0,2)}
\]
and the bound
\[
 \|\varrho_1 \widetilde{T}_j''\|_{L^\infty(0,2)} \leq \frac{1}{2} j^3.
\]
which is a consequence of
\[
\sin\theta \, T_j''(\theta) = -j \frac{\rm d}{{\rm d}\theta} \left( \frac{\sin j\theta}{\sin\theta} \right)
\]
and \eqref{exp:Tj} (see also \cite[Lemma 2.2]{Dom:2014}).

\end{proof}

\begin{lemma}\label{lemma:03:CleCur}
There exists $C_1,\ C_0>0$ such that all $\nu {>0}$
\begin{equation}\label{eq:01:lemma:03:CleCur}
 \int_0^2 \exp(-\nu s)\,\varrho_1(s)\, {\rm d}s \leq   C_1 \nu^{-3/2},\quad
\end{equation}
and
\begin{equation}\label{eq:02:lemma:03:CleCur}
 \int_0^2  \exp(-\nu s)\,\varrho_{0}(s)\,{\rm d}s \leq C_0 \nu^{-1/2}.
\end{equation}
\end{lemma}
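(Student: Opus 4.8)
The plan is to estimate each integral by recognizing that, near the endpoints $s=0$ and $s=2$, the weight $\varrho_m(s)=(s(2-s))^{m-1/2}$ behaves like a simple power of the distance to the nearest endpoint, and that the dominant contribution to the integral for large $\nu$ comes from a neighborhood of $s=0$ (since $\exp(-\nu s)$ is largest there). The factor $(2-s)^{m-1/2}$ is bounded above and below by positive constants on a neighborhood of $0$, so it can be absorbed into the constant; the genuine scaling behavior is dictated entirely by the $s^{m-1/2}$ factor. Concretely, for \eqref{eq:02:lemma:03:CleCur} I would use $\varrho_0(s)=(s(2-s))^{-1/2}\le C\, s^{-1/2}$ on $[0,2]$ (the singularity at $s=2$ is integrable and contributes only exponentially small terms, controlled by $e^{-\nu}$), and for \eqref{eq:01:lemma:03:CleCur} I would use $\varrho_1(s)=(s(2-s))^{1/2}\le C\, s^{1/2}$.

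\medskip
\noindent
First I would reduce both bounds to the model integral
\[
\int_0^2 \exp(-\nu s)\, s^{\beta}\,{\rm d}s,
\]
with $\beta=-1/2$ for the second estimate and $\beta=1/2$ for the first. The key step is the substitution $t=\nu s$, which turns this into
\[
\nu^{-\beta-1}\int_0^{2\nu} \exp(-t)\, t^{\beta}\,{\rm d}t
\le \nu^{-\beta-1}\int_0^{\infty} \exp(-t)\, t^{\beta}\,{\rm d}t
= \nu^{-\beta-1}\,\Gamma(\beta+1),
\]
valid because the integrand is nonnegative, so extending the upper limit from $2\nu$ to $\infty$ only increases the value and supplies the clean Gamma-function constant. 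Taking $\beta=-1/2$ gives the exponent $-\beta-1=-1/2$, matching \eqref{eq:02:lemma:03:CleCur} with $C_0=\Gamma(1/2)\,C=\sqrt{\pi}\,C$; taking $\beta=1/2$ gives $-\beta-1=-3/2$, matching \eqref{eq:01:lemma:03:CleCur} with $C_1=\Gamma(3/2)\,C=\tfrac{\sqrt{\pi}}{2}\,C$.

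\medskip
\noindent
The only delicate point is justifying that replacing the two-sided weight $(s(2-s))^{\beta}$ by the one-sided power $s^{\beta}$ costs merely a multiplicative constant and does not alter the rate in $\nu$. For $\beta\ge 0$ this is immediate since $(2-s)^{\beta}\le 2^{\beta}$ on $[0,2]$. For $\beta=-1/2$ the factor $(2-s)^{-1/2}$ blows up at $s=2$, but that singularity is integrable and sits where $\exp(-\nu s)\le e^{-\nu}$ is exponentially small; splitting the interval at $s=1$, the contribution from $[1,2]$ is bounded by $e^{-\nu}\int_1^2(s(2-s))^{-1/2}\,{\rm d}s = \mathcal{O}(e^{-\nu})$, which is dominated by $\nu^{-1/2}$ for $\nu$ bounded below, while on $[0,1]$ the bound $(2-s)^{-1/2}\le 1$ holds and the $s^{\beta}$ argument above applies directly. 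I expect no real obstacle here; the main care is simply keeping the constants uniform in $\nu$, which the nonnegativity of the integrand and the global majorant $\Gamma(\beta+1)$ guarantee.
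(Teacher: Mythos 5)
Your proof is correct and follows essentially the same route as the paper: the substitution $t=\nu s$ together with the bound $(2-s)^{1/2}\le\sqrt{2}$ for the $\varrho_1$ case, and the split at $s=1$ with $(2-s)^{-1/2}\le 1$ on $[0,1]$ and $e^{-\nu s}\le e^{-\nu}$ on $[1,2]$ for the $\varrho_0$ case. The only cosmetic difference is that where you say the $\mathcal{O}(e^{-\nu})$ tail is dominated by $\nu^{-1/2}$ ``for $\nu$ bounded below,'' the paper makes this uniform over all $\nu>0$ via $\sup_{\nu>0}e^{-\nu}\nu^{1/2}=e^{-1/2}/\sqrt{2}<\infty$ (and for $\nu\le 1$ the domination is trivial anyway, since $\nu^{-1/2}\ge 1\ge e^{-\nu}$).
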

\begin{proof}
{Using the change of variables $t=\nu s$, we obtain}
\[
\begin{aligned}
\int_0^2 \exp(-\nu s)\,\varrho_1(s)\, {\rm d}s &\ =\
\nu^{-1}\int_0^{2\nu} \exp(-t)\,\varrho_1(t/\nu)\, {\rm d}t\le
\sqrt{2}\nu^{-3/2}\int_0^\infty \exp(-t)\,t^{1/2}\, {\rm d}t \\
&\ = \sqrt{2}\Gamma(3/2)\nu^{-3/2}=\frac{1}{\sqrt{2}}\pi \nu^{-3/2}
\end{aligned}
\]
{which proves the first result }

{For the second result, we can simply proceed as follows }
\[
\begin{aligned}
 \int_0^2 \exp(-\nu s)\,\varrho_0(s)\, {\rm d}s \ &\le\  \int_0^1 e^{-\nu s} s^{-1/2}\,{\rm d}s + e^{-\nu} \int_1^{2}  (2-s)^{-1/2}\,{\rm d}s\\
 &\le\  \nu^{-1/2}\int_0^\infty e^{-t} t^{-1/2}\,{\rm d}t + 2 e^{-\nu}\le  \left(\Gamma(1/2)+2\max_{\nu \ge 0} e^{-\nu}\nu^{1/2}\right)\nu^{-1/2}
\\
 &\le\  (\sqrt{\pi}+\sqrt{2}e^{-1/2}) \nu^{-1/2}.
 \end{aligned}
\]

\end{proof}

We are now ready to present the main result of this section, which presents different estimates for the error of the quadrature rule. These estimates account for $L$, the number of quadrature points, and varying ranges of $ \sigma = \Re z$, which dictates the exponential decay, and $|z|$, which also captures the potential oscillatory behavior.

Recall the definition of $\mathbb{C}_{\mu}$ given in \eqref{eq:defCmu}.
\begin{theorem}\label{theo:CleCur}
For all $\mu_0>0$ and $m\geq 1$, there exists a constant $C_{m,\mu_0}>0$ such that for all $L\geq m-1$
and $z\in\mathbb{C}_{\mu_0}$ with $\sigma = \Re z$, the following holds:
 \begin{equation}\label{eq:01:theo:CleCur}
  |I_{L,z}(f)-I_{z}(f)| \leq C_{m,\mu_0} L^{1-m} (1+|\sigma|)^{-3/4} \min\bigg\{\frac{1}{L},\frac{1}{|z|}(1+|\sigma|)^{1/2} \bigg\}\|f^{(m)}\|_{\varrho_{m}}.
\end{equation}
Moreover, for all $m\geq m_0>7/2$,
there exists a constant $C'_{m,m_0,\mu_0}$, depending on $m$, $m_0$ and $\mu_0$ but independent of $L\geq m-1$ and  $z\in\mathbb{C}_{\mu_0}$ such that
 \begin{equation}\label{eq:02:theo:CleCur}
   |I_{L,z}(f)-I_{z}(f)|\leq C'_{m,m_0,\mu_0} (1+|z|)^{-2} L^{m_0-m-1}\bigg(
   1+ (1+|\sigma|)^{-1/2} L
   \bigg)\|f^{(m)}\|_{\varrho_{m}} .
\end{equation}
\end{theorem}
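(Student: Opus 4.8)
The plan is to derive both estimates from the basic bound \eqref{eq:intr:01}, namely $|I_{z}(f)-I_{L,z}(f)|\le C(\sigma)\|e_L\|_{L^1(0,2)}$, by combining two independent handles on the error: a direct $L^1$ estimate of $e_L$ coming from Propositions \ref{prop:Cheby}--\ref{prop:est}, and an integration-by-parts argument in the original variable that extracts decay in $|z|$. First I would bound $\|e_L\|_{L^1(0,2)}$ by the weighted $L^2$ norm via Cauchy--Schwarz. Since $\|e_L\|_{L^1(0,2)}=\int_0^2 |e_L(s)|\,\mathrm{d}s = \int_0^2 |e_L(s)|\varrho_0(s)^{1/2}\varrho_0(s)^{-1/2}\,\mathrm{d}s$, applying Cauchy--Schwarz with weight $\varrho_0$ gives $\|e_L\|_{L^1(0,2)}\le \|e_L\|_{\varrho_0}\,\big(\int_0^2\varrho_0(s)^{-1}\,\mathrm{d}s\big)^{1/2} \le C\,\|e_L\|_{\varrho_0}$, because $\int_0^2(s(2-s))^{1/2}\,\mathrm{d}s$ is finite. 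Then \eqref{eq:est02} yields $\|e_L\|_{L^1(0,2)}\le C_m L^{-m}\|f^{(m)}\|_{\varrho_m}$, and since $C(\sigma)\le C(1+|\sigma|)^{-1}\le C(1+|\sigma|)^{-3/4}\min\{1,\dots\}$ absorbs the $\sigma$-dependence, this already gives the $1/L$ branch of \eqref{eq:01:theo:CleCur}. The extra factor $(1+|\sigma|)^{-3/4}$ requires sharpening the $L^1$ bound: instead of $\varrho_0$ I would use the exponential weight and Lemma \ref{lemma:03:CleCur}, estimating $\int_0^2|e_L(s)|e^{\sigma s}\,\mathrm{d}s$ directly as $\|e_L\|_{\varrho_0}\big(\int_0^2 e^{2\sigma s}\varrho_0^{-1}\big)^{1/2}$ and invoking \eqref{eq:02:lemma:03:CleCur} to gain the $(1+|\sigma|)^{-3/4}$ factor from the $\nu^{-1/2}$ decay with $\nu\sim|\sigma|$.

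To obtain the $1/|z|$ branch in \eqref{eq:01:theo:CleCur} and, more importantly, the $|z|^{-2}$ behavior in \eqref{eq:02:theo:CleCur}, the key step is integration by parts in $I_z(e_L)=\int_0^2 e_L(s)\exp(sz)\,\mathrm{d}s$. Writing $\exp(sz)=\frac{1}{z}\frac{\mathrm{d}}{\mathrm{d}s}\exp(sz)$ and integrating by parts once, the boundary terms vanish because $0$ and $2$ are interpolation nodes, so $e_L(0)=e_L(2)=0$; this is precisely the structural feature the introduction emphasizes. We are left with $I_z(e_L)=-\frac{1}{z}\int_0^2 e_L'(s)\exp(sz)\,\mathrm{d}s$. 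Bounding this by $\frac{1}{|z|}\int_0^2|e_L'(s)|e^{\sigma s}\,\mathrm{d}s$ and applying Cauchy--Schwarz with the weight $\varrho_1$, then Lemma \ref{lemma:03:CleCur} estimate \eqref{eq:01:lemma:03:CleCur} (which supplies $\nu^{-3/2}$ decay), together with the bound $\|e_L'\|_{\varrho_1}\le C_m L^{1-m}\|f^{(m)}\|_{\varrho_m}$ from \eqref{eq:est02}, produces the $|z|^{-1}(1+|\sigma|)^{1/2}$ scaling (up to the common $L^{1-m}(1+|\sigma|)^{-3/4}$ prefactor), completing \eqref{eq:01:theo:CleCur}. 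For \eqref{eq:02:theo:CleCur} I would integrate by parts a second time; since $e_L'$ does not vanish at the endpoints, this produces boundary terms $\frac{1}{z^2}[e_L'(2)e^{2z}-e_L'(0)]$ plus $\frac{1}{z^2}\int_0^2 e_L''(s)\exp(sz)\,\mathrm{d}s$. The boundary terms are controlled by $\|e_L'\|_{L^\infty}$ via \eqref{eq:01:lemma:02:CleCur}, and the remaining integral is handled by writing $e_L''(s)=\big(e_L''(s)\varrho_1(s)\big)\varrho_1(s)^{-1}$, pulling out $\|e_L''\varrho_1\|_{L^\infty}$ from \eqref{eq:02:lemma:02:CleCur}, and bounding $\int_0^2\varrho_1(s)^{-1}e^{\sigma s}\,\mathrm{d}s$ by Lemma \ref{lemma:03:CleCur}.

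The main obstacle is bookkeeping the $\sigma$- and $|z|$-dependence so that the two integrations by parts combine into the stated form with the correct powers. Several subtleties demand care: the $L^\infty$ estimates \eqref{eq:01:lemma:02:CleCur} and \eqref{eq:02:lemma:02:CleCur} lose $L^{m_0}$ (hence the threshold $m_0>7/2$ once a derivative is taken inside the second integration by parts, rather than the $5/2$ of Proposition \ref{prop:est}); the weight $\varrho_1$ vanishes like a square root at the endpoints, so $\int_0^2\varrho_1^{-1}e^{\sigma s}\,\mathrm{d}s$ is only integrable thanks to the $(2-s)^{-1/2}$ and $s^{-1/2}$ singularities being integrable, and Lemma \ref{lemma:03:CleCur} must be applied with $\nu=|\sigma|$ (after separating the trivial $\sigma=0$ and $|\sigma|\le 1$ cases that the $(1+|\sigma|)$-shifts encode). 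I would organize the final assembly by treating $|\sigma|\le 1$ and $|\sigma|>1$ separately, using the crude bound $C(\sigma)\le 2$ and $e^{\sigma s}\le 1$ in the former regime and the Lemma \ref{lemma:03:CleCur} decay in the latter, then verifying that in each regime the claimed exponents match. The $\min\{\cdot,\cdot\}$ in \eqref{eq:01:theo:CleCur} is simply the assertion that both the direct $L^1$ bound and the single-integration-by-parts bound are available, so one takes whichever is smaller.
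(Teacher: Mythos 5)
Your proposal is correct and follows essentially the same route as the paper's proof: the dual-weight Cauchy--Schwarz bound $|I_{z}(f)-I_{L,z}(f)|\le \|e_L\|_{\varrho_0}\,\|\exp(\sigma\,\cdot\,)\|_{\varrho_1}$ exploiting $\varrho_0\varrho_1\equiv 1$, a single integration by parts using $e_L(0)=e_L(2)=0$ for the $1/|z|$ branch, and a second integration by parts combined with the $L^\infty$ bounds \eqref{eq:01:lemma:02:CleCur}--\eqref{eq:02:lemma:02:CleCur} and Lemma \ref{lemma:03:CleCur} for \eqref{eq:02:theo:CleCur}, including the correct explanation of why the threshold moves from $5/2$ to $7/2$. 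The one slip is that you swapped the two halves of Lemma \ref{lemma:03:CleCur}: the $\frac{1}{L}$ branch needs \eqref{eq:01:lemma:03:CleCur} (the $\varrho_1$ estimate, decay $\nu^{-3/2}$, whose square root yields the $(1+|\sigma|)^{-3/4}$ factor), while the $\frac{1}{|z|}$ branch needs \eqref{eq:02:lemma:03:CleCur} (the $\varrho_0$ estimate, decay $\nu^{-1/2}$); since the integrals you actually wrote down are the correct ones, the claimed exponents still come out right once the citations are fixed.
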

\begin{proof}
Since $\varrho_0\varrho_1\equiv 1$, by applying the Cauchy-Schwarz inequality, we obtain:
\begin{equation}\label{eq:03:theo:CleCur}
 |I_{L,z}(f)-I_{z}(f)|= \bigg|\int_0^2 e_L(s)\exp(z s)\,{\rm d}s\bigg|\leq  \|e_L\|_{\varrho_0}  \|\exp(\sigma\,\cdot\,)\|_{\rho_1}.
\end{equation}
Alternatively, applying integration by parts and noting that $e_L(0)=e_L(2)=0$, we obtain
\begin{eqnarray}
|I_{L,z}(f)-I_{z}(f)|& = &
\frac{1}{|z|}\bigg|\int_0^2 e'_L(s)\exp(z s)\,{\rm d}s\bigg|
\leq \frac{1}{|z|}
\|e'_L\|_{\varrho_1}  \|\exp(\sigma\,\cdot\,)\|_{\varrho_0}.
\label{eq:04:theo:CleCur}
\end{eqnarray}
Estimate \eqref{eq:01:theo:CleCur} for $\sigma \geq -1/2$ follows directly from \eqref{eq:03:theo:CleCur}--\eqref{eq:04:theo:CleCur} and \eqref{eq:est02} in Proposition \ref{prop:est}, whereas the case $\sigma \leq -1/2$ additionally requires applying Lemma \ref{lemma:03:CleCur} with $\nu = -2\sigma$, which implies
\[
|\sigma^{3/4}|\|\exp(\sigma\,\cdot\,)\|_{\varrho_1} +
|\sigma^{1/4}|\|\exp(\sigma\,\cdot\,)\|_{\varrho_0} \leq C.
\]

To prove \eqref{eq:02:theo:CleCur} we can assume again that $\sigma\le -1/2$, apply an additional step of integration by parts to bound
\begin{eqnarray*}
|I_{L,z}(f)-I_{z}(f)|&\leq& \frac{1}{|z|^2}  \bigg| e_L'(s) \exp(sz)\Big|_{s=0}^{s=2}\bigg|
+\frac{1}{|z|^2}\int_0^2 |e''_L(s)|\exp(\sigma  s)\,{\rm d}s\\
&\leq& \frac{1}{|z|^2}\Big[ 2\|e_L'\|_{L^\infty(0,2)}+\|\varrho_1 e''_{L}\|_{L^\infty(0,2)}
\|\varrho_0 \exp(\sigma \,\cdot\,)\|_{L^1(0,2)} \Big]
\end{eqnarray*}
and use \eqref{eq:01:lemma:02:CleCur} and \eqref{eq:02:lemma:02:CleCur}
from Proposition \ref{prop:est} and \eqref{eq:02:lemma:03:CleCur} from Lemma \ref{lemma:03:CleCur}.

\end{proof}

\section{Implementation}

As discussed in  previous sections, implementing the rule  {$I_{L,z}(f)$ } requires a fast and numerically stable computation of the weights \added[id=R2]{\eqref{eq:thequadformula:b}}.
%\[
% \omega_n(z):= \int_{0}^2 T_{n}(s-1)\exp(z s)\,{\rm d}s, \quad { n=0,\ldots,L.}
%\]
We consider the closely related coefficients
\begin{equation}\label{eq:rhon}
 \rho_n(z) := \int_{0}^2 U_{n}(s-1)\exp(z s)\,{\rm d}s, \qquad U_n(s) =\frac{1}{n+1}T_{n+1}'(s).
\end{equation}
(Thus, $U_n$ is the Chebyshev polynomial of the second kind). By integrating by parts and using the identity  $T_n(-1)=(-1)^n$, we establish the relationship  between the two weight  sequences:
\begin{equation}\label{eq:gamman}
 \omega_{n+1}(z) = \gamma_{n+1}(z) - \frac{n+1}{z} \rho_n(z),\quad \gamma_n(z):=
 \begin{cases}
       \frac{1}{z}(e^{2z}-1),& \text{if $n$ is even },\\
       \frac{1}{z}(e^{2z}+1),& \text{if $n$ is odd }.
 \end{cases}
\end{equation}
Furthermore, since
\[
 U_{n+1}(x) -U_{n-1}(x) = 2T_{n+1}(x),
\]
(consequence of the trigonometric identity $ \sin(n+2)\theta- \sin n\theta = 2\sin\theta \cos(n+1)\theta$)
we obtain
\[
 \rho_{n+1}(z) - \rho_{n-1}(z) = 2 \omega_{n+1}(z) = 2\gamma_{n}(z) - \frac{2(n+1)}{z}\rho_n(z),
\]
which provides the following recurrence relation for the coefficients, which serves as the key element of the algorithm:
\begin{equation}\label{eq:rho:01}
 -\rho_{n-1}(z) + \frac{2n+2}{z} \rho_n(z) + \rho_{n+1}(z) = 2\gamma_{{n}}(z).
\end{equation}

These recurrence relations have been used in the literature to implement this and related quadrature rules, for instance, in Filon-Clenshaw-Curtis rules for smooth integrands \cite{QUADPACK,DoGrSm:2010,Xiang2014} and for integrands exhibiting logarithmic singularities \cite{Dom:2014}. Moreover, \cite{Bakhvalov1968} applies a nearly identical property to Legendre polynomials in the context of Filon-Gaussian rules.

{This yields an initial algorithm for computing the weights $\omega_n(z)$ (and $\rho_n(z)$ as well) for $n=0,\ldots,L_0$, where $L_0$ must be relatively small—below a threshold $n_0 = n_0(z)$ to be specified later (see \eqref{eq:n0} below). Therefore, we set $L_0 = \min\{L, n_0\}$, where $L$ is the number of weights required by the rule. In other words, this initial algorithm may already be sufficient when $L$ is small and/or $z$ is large, as will become clear shortly.}

{However, as will be shown, this may not be sufficient even for moderate values of $L$ if $z$ is large enough, and thus the calculation of the weights becomes more involved. The idea is then to rewrite \eqref{eq:rho:01} as a tridiagonal linear system. This is a classical approach that can be traced back to the works of  Oliver \cite{Oliver1968} and Olver \cite{Olver1967}. This leads to Algorithm~2, or Phase~2 of the full procedure. This reformulation requires the value of $\rho_{m}(z)$ for some (typically large) index $m$. The computation of this extreme coefficient is the subject of the third algorithm, or Phase~3 of the full method.}

{The full algorithm therefore consists of up to three phases. The first, which applies to the initial coefficients, is based on the recurrence relation. The second, which computes the (possibly) remaining weights, involves solving a tridiagonal linear system. The third provides the value of $\rho_{m}(z)$ required in Phase~2. We derive these components below, establishing numerical stability and/or convergence along the way. The complete algorithm is given in Algorithm~\ref{alg:04}, while the individual components are presented in Algorithms~\ref{alg:01}--\ref{alg:03}.}

\subsection{Phase 1: {Computation of $\omega_n(z)$ and $\rho_n(z)$ for $n=0,\ldots,L_0$, with $L_0 \le n_0$}}

Identity \ref{eq:gamman} and recurrence relation \eqref{eq:rho:01} provide a straightforward but na\"ive method for computing the desired weights  $\bm{\omega}_{{L_0}} (z) := (\omega_0(z), \ldots, \omega_{L_0}(z))$ from $\bm{\rho}_{L_0}(z) := (\rho_0(z), \ldots, \rho_{{L_0}}(z))$. This results in  Algorithm \ref{alg:01}.
\begin{algorithm}
\begin{algorithmic}[1]
       \Require $L_0$, $z$, and $\gamma_n(z)$ as defined in \eqref{eq:gamman}.
       \State $\displaystyle \rho_0(z) = \frac{\exp(2z)-1}{z}$
       \State $\displaystyle \rho_1(z) =  \frac{2(z + \exp(2z)(z - 1) + 1)}{z^2}$
       \State $\displaystyle \omega_0(z) = \rho_0(z)$
       \State $\displaystyle \omega_1(z) = \tfrac12\rho_1(z)$
        \For{$n = 1:L_0-1$}
            \State $\displaystyle\omega_{n+1}(z) =  - \frac{n+1}{z} \rho_n(z)+ \gamma_{n+1}(z)$
            \State $\displaystyle\rho_{n+1}(z) = \rho_{n-1}(z) - \frac{2n+2}{z} \rho_n(z) + 2\gamma_{n}(z)$
         \EndFor
        \State \Return $\bm{\omega}_{L_0}(z) = (\omega_0(z),\ldots, \omega_{L_0}(z)),\ \bm{\rho}_{L_0}(z)=(\rho_0(z),\ldots, \rho_{L_0}(z))$
\end{algorithmic}
\caption{\label{alg:01}Computation of the weight vectors $\bm{\omega}_{L_0}(z)$ and $\bm{\rho}_{L_0}(z)$.}
\end{algorithm}

Unfortunately, it is well known (cf. \cite{DoGrSm:2010,Pi:2000}) that this recurrence relation is stable for $\sigma = \Re z=0$ only when $n\lesssim |z|$. This was rigorously proven in that paper by analyzing the recurrence
\begin{equation}\label{eq:recurrenceSolutions}
\begin{array}{l}
 \delta_{m-1}^{(m)} =0,\quad
 \delta_{m}^{(m)}   =1,\\
 \delta_{j+1}^{(m)} = \delta_{j-1}^{(m)} - \frac{2j}{z} \delta_j^{(m)}, \quad j = m, \ m+1, \ldots
 \end{array}
\end{equation}
which illustrates how errors in evaluating of \eqref{eq:rho:01} propagate along this sequence. The solution is given by
\[
 \delta_j(z)= -\frac{1}{2} \pi  z (-i)^{m+j+1} \left[Y_{m-1}(i z) J_j(i z)-J_{m-1}(i z) Y_j(i z)\right],
\]
where $J_j$ and $Y_j$ denote the Bessel functions of the first and second kind, respectively. A detailed {analysis} involving accurate bounds for the Bessel functions on the real line established the result for $z \in \mathrm{i}\mathbb{R}$, concluding that Algorithm \ref{alg:01} is effectively stable in this particular case, when $|z| = |\Im z|\lesssim n$. However, extending this {analysis} to general complex numbers $z$, i.e., when $\Re z\neq 0$, is challenging due to the significantly more intricate behavior of the Bessel functions. Nonetheless, numerical experiments have demonstrated that instability in the recurrence relations arises even earlier when $\sigma =\Re z \neq 0$, with the worst-case scenario occurring for real $z < 0$, where stability clearly breaks down as $n\approx |z|^{1/2}$. This result is proven in the next proposition.

The extent to which the stability range of the recurrence relation extends as the ratio $|\Im z|/|z|$ increases, and consequently how the usability of Algorithm \ref{alg:01} is affected as the imaginary part of $z$ gains weight, remains an open problem. We refer the reader to next section for some numerical experiments on this topic (see Figure \ref{fig:02}).

\begin{proposition}\label{prop:stab}
Let $(\rho_n(z))_{n\geq 0}$ \added[id=R2]{be} the sequence of weights \eqref{eq:rhon} and $(\widetilde{\rho}_n(z))_{n\geq 0}$ \added[id=R2]{be} that given by a perturbed version of Algorithm \ref{alg:01}
\begin{equation}\label{eq:01:prop:stab}
\begin{aligned}
\widetilde{\rho}_0(z)\ &=\ \frac{\exp(2z)-1}{z} +\varepsilon_0\\
\widetilde{\rho}_1(z)\ &=\ \frac{2(z + \exp(2z)(z - 1) + 1)}{z^2} +\varepsilon_1\\
\widetilde{\rho}_{j+1}(z) \ &=\ \widetilde{\rho}_{j-1}(z) - \frac{2j+2}{z} \widetilde{\rho}_j(z) +\gamma_j(z)+ \varepsilon_{j+1},\quad j = 2,3,\ldots
\end{aligned}
\end{equation}
with
\[
 |\varepsilon_j|\leq   \varepsilon.
\]
Then for any $|z|$ sufficiently large and for $n+1<|z|$,
\[
|{\rho}_n(z)-\widetilde{\rho}_n(z)| <e^{(n+1)^2/|z|} \left(\left\lceil n/2\right\rceil +1\right)\varepsilon.
\]

\end{proposition}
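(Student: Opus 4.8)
The plan is to track the propagated error $E_n := \rho_n(z)-\widetilde{\rho}_n(z)$ directly. Subtracting the exact recurrence \eqref{eq:rho:01}, in the form used in Algorithm~\ref{alg:01}, from its perturbed counterpart \eqref{eq:01:prop:stab}, the common inhomogeneous $\gamma$-terms cancel and one is left with the purely homogeneous error recurrence
\[
 E_{n+1} = E_{n-1} - \frac{2n+2}{z}E_n - \varepsilon_{n+1}, \qquad |E_0|\le\varepsilon,\quad |E_1|\le\varepsilon.
\]
Taking absolute values yields the scalar majorant inequality $|E_{n+1}| \le |E_{n-1}| + \frac{2(n+1)}{|z|}|E_n| + \varepsilon$. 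I would then establish the claimed bound by two-step induction. Writing $c_n := \lceil n/2\rceil + 1$ and introducing the rescaled quantities $A_n := e^{-(n+1)^2/|z|}|E_n|/\varepsilon$, the target becomes $A_n \le c_n$, which is exactly the asserted estimate; the base cases $A_0,A_1\le 1$ follow from $e^{-(n+1)^2/|z|}\le 1$.

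The inductive step is where the exponential weight pays for the homogeneous amplification. Multiplying the majorant inequality by $e^{-(n+2)^2/|z|}$ and recording the elementary exponent identities $n^2-(n+2)^2=-4(n+1)$ and $(n+1)^2-(n+2)^2=-(2n+3)$, one obtains
\[
 A_{n+1} \le A_{n-1}\,e^{-4(n+1)/|z|} + \frac{2(n+1)}{|z|}A_n\,e^{-(2n+3)/|z|} + e^{-(n+2)^2/|z|}.
\]
Inserting the induction hypotheses $A_{n-1}\le c_{n-1}$, $A_n\le c_n$, and using the arithmetic fact $c_{n+1}=c_{n-1}+1$ together with $e^{-(n+2)^2/|z|}\le 1$, the desired bound $A_{n+1}\le c_{n+1}$ reduces to the single inequality
\[
 \frac{2(n+1)}{|z|}\,c_n\, e^{-(2n+3)/|z|} \;\le\; c_{n-1}\Big(1-e^{-4(n+1)/|z|}\Big),
\]
to be verified for $n\ge 1$ under $n+1<|z|$. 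Checking this scalar inequality is the crux of the argument.

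To close it I would exploit two structural facts. First, $c_n\le 2c_{n-1}$ for all $n\ge 1$ (immediate from the definition, with equality only at $n=1$), which bounds the left-hand side by $c_{n-1}\,x\,e^{-\mu x}$, where $x:=4(n+1)/|z|\in(0,4)$ and $\mu:=(2n+3)/(4n+4)$. Second, the observation that $\mu>1/2$ (equivalently $4n+6>4n+4$), so that $x e^{-\mu x}\le x e^{-x/2}$ and the whole inequality collapses to the single-variable statement $x e^{-x/2}\le 1-e^{-x}$. This last inequality I would prove by the substitution $u=e^{-x/2}\in(0,1)$, under which it reads $-2u\ln u \le 1-u^2$: setting $\psi(u):=1-u^2+2u\ln u$, one has $\psi(1)=0$ and $\psi'(u)=2(1-u+\ln u)<0$ on $(0,1)$, so $\psi>0$ there, giving the bound (and its strict form). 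The main obstacle is precisely this bookkeeping: the three exponents $4(n+1)$, $2n+3$ and $(n+2)^2$ are genuinely distinct, so no single substitution diagonalizes them, and the inequality is tight at small $n$ and small $1/|z|$; this forces one to retain the exact combinatorics $c_{n+1}=c_{n-1}+1$, $c_n\le 2c_{n-1}$ and the sharp threshold $\mu>1/2$ rather than cruder polynomial bounds on $c_n$. The hypothesis that $|z|$ be sufficiently large enters only to ensure that the base cases and the regime $n+1<|z|$ are the governing ones.
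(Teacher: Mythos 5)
Your proof is correct, and it takes a genuinely different route from the paper's. The paper argues by superposition: for each perturbation it introduces a majorant sequence $\varepsilon_j^{(m)}$ propagated through the homogeneous recurrence, bounds each term by the product $\prod_j\bigl(1+\tfrac{2j}{|z|}\bigr)\le \exp\bigl((\alpha_r-\alpha_{2m+1})/|z|\bigr)$ with $\alpha_k=k(k+1)$, and then sums over all perturbations, handling even and odd indices separately. You instead run a two-step induction on the rescaled error $A_n=e^{-(n+1)^2/|z|}|E_n|/\varepsilon$, and the entire inductive step collapses to the scalar inequality $xe^{-x/2}\le 1-e^{-x}$, which you verify correctly via $\psi(u)=1-u^2+2u\ln u>0$ on $(0,1)$. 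Your approach buys two real improvements. First, it delivers exactly the stated exponent $(n+1)^2/|z|$ together with the strict inequality, whereas the paper's summation only reaches exponents of the form $4\left(\left\lceil n/2\right\rceil+1\right)^2/|z|$, i.e.\ essentially $(n+2)^2/|z|$, so the paper's own proof is (harmlessly) looser than its statement. Second, your scalar inequality holds for all $x>0$, so you need neither the hypothesis that $|z|$ be sufficiently large nor $n+1<|z|$: the bound is valid for every $n$ and every $z\neq 0$. Your closing remark that these hypotheses "enter to ensure the base cases are the governing ones" is therefore overly cautious—they are not needed anywhere in your argument. One bookkeeping point common to both proofs: the perturbed recurrence in \eqref{eq:01:prop:stab} carries $\gamma_j(z)$ where \eqref{eq:rho:01} and Algorithm \ref{alg:01} have $2\gamma_j(z)$, and it is stated for $j\ge 2$, which leaves $\widetilde{\rho}_2$ undefined; like the paper, you implicitly read the perturbed recurrence as mirroring the exact one from $j=1$ onward, which is the only interpretation under which the inhomogeneous terms cancel and the claim can hold.
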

\begin{proof}Define for $m = 0, 1, \ldots,$
\begin{equation}\label{eq:recurrenceSolutions:02}
\begin{array}{l}
 \varepsilon_{2m}^{(m)} = \varepsilon,\quad
 \varepsilon_{2m+1}^{(m)} = \varepsilon,\\
 \displaystyle \varepsilon_{j+1}^{(m)} = \varepsilon_{j-1}^{(m)} + \frac{2(j+1)}{|z|}\varepsilon_j^{(m)}, \quad j = 2m+1, \ 2m+2, \ldots.
 \end{array}
\end{equation}
Clearly, $(\varepsilon^{(m)})_{n\geq 2m}$ is an increasing sequence that can be used to derive the following bounds for the difference between the original and the perturbed sequence:
\begin{equation}\label{eq:wagner:parsifal:01}
|{\rho}_{2n}(z) - \widetilde{\rho}_{2n}(z)| \leq \varepsilon + \sum_{m=0}^{n-1} \varepsilon_{2n}^{(m)}, \quad
|{\rho}_{2n+1}(z) - \widetilde{\rho}_{2n+1}(z)| \leq 2\varepsilon + \sum_{m=0}^{n-1} \varepsilon_{2n+1}^{(m)}.
\end{equation}
% \[
% |{\rho}_{2n}(z) - \widetilde{\rho}_{2n}(z)| \leq \varepsilon + \sum_{m=0}^{n-1} \varepsilon_{2n}^{(m)}, \quad
% |{\rho}_{2n+1}(z) - \widetilde{\rho}_{2n+1}(z)| \leq 2\varepsilon + \sum_{m=0}^{n-1} \varepsilon_{2n+1}^{(m)}.
% \]
Let us now focus on bounding
\begin{equation}\label{eq:E2nE2n+1}
 E_{2n} = \sum_{m=0}^{n-1} \varepsilon_{2n}^{(m)},\quad E_{2n+1} = \sum_{m=0}^{n-1} \varepsilon_{2n+1}^{(m)},
\end{equation}
from which the result will follow.

It is straightforward to show by induction that
\[
|\varepsilon_{r}^{(m)}| \leq \prod_{j=2m+2}^{r}\left(1+\frac{2j}{|z|}\right)\varepsilon
\leq \exp\left(\frac{\alpha_r-\alpha_{2m+1}}{|z|}\right),\quad \alpha_k := 2\sum_{\ell=1}^{k} \ell = k(k+1).
\]
This implies
\begin{eqnarray*}
E_{2n} &\leq& e^{4n(n+1)/|z|} \left[\sum_{m=0}^{n-1} e^{-2m(2m+1)/|z|}\right] \varepsilon
 \leq  e^{4(n+1)^2/|z|} n  \varepsilon .
\end{eqnarray*}
%with
% \[
%  C_{n-1}(z):= \left[\sum_{m=0}^{n-1} e^{-4m^2/|z|}\right].
% \]
% Furthermore,
% \[
%  C_{n-1}(z) \leq   1 + \int_{0}^{n-1} e^{-4x^2/|z|} \,{\rm d}x
% \leq  1 + \frac{1}{4}\sqrt{\pi |z|} \mathop{\rm erf}\left(\frac{2(n-1)}{\sqrt{|z|}}\right) .
% \]
% Using the inequality
% \[
% \frac{1}{2}\sqrt{\pi} \mathop{\rm erf}(x) \leq x, \quad x \ge 0,
% \]
% we obtain
% \[
%  C_{n-1}(z) \leq  n.
% \]
% Thus, we have proved
% \[
% E_{2n} \leq e^{4(n+1)^2/|z|} n\varepsilon,
% \]
Substituting this into the first inequality in \eqref{eq:wagner:parsifal:01}, we get
\[
|{\rho}_{2n}(z) - \widetilde{\rho}_{2n}(z)| < \varepsilon + \sum_{m=0}^{n-1} \varepsilon_{2n}^{(m)}
\leq (1 + e^{4(n+1)^2/|z|}n)\varepsilon
< e^{4(n+1)^2/|z|}(n+1)\varepsilon.
\]
Similarly, we can prove
\[
E_{2n+1} \leq   e^{(n+2)^2/|z|} n \varepsilon %`\le  e^{4(n+2)^2/|z|}(n+2)\varepsilon,
\]
and the second inequality in \eqref{eq:wagner:parsifal:01} yields
\[
|{\rho}_{2n+1}(z) - \widetilde{\rho}_{2n+1}(z)| \le 2\varepsilon+ E_{2n+1}\le   2\varepsilon+e^{4(n+2)^2/|z|} n \varepsilon  < e^{4(n+2)^2/|z|}(n+2)\varepsilon.
\]
The result follows now readily.
\end{proof}

\begin{remark}

This proposition implies the stability of the direct recurrence-relation algorithm (Algorithm~\ref{alg:01}) for large values of $|z|$ and/or moderate values of $L$.
Numerical experiments indicate that beyond $\big\lceil 2|z|^{1/2} \big\rceil + 1$  the computation can become effectively unstable. In particular, for $z = \sigma < 0$ (purely exponential-decay integrals) and $n \approx 4|\sigma|^{1/2}$, the instability significantly contaminates the numerical evaluation, rendering the results meaningless.
For $z = \eta{\rm i} \in {\rm i}\mathbb{R}$, as commented above, we can safely set $n_0 = \lceil |z| \rceil + 1$ following the results established in \cite{DoGrSm:2010}, which extends the range of applicability of this algorithm in the purely oscillatory case. {Defining a sharper value of $n_0$, which should depend on the real and imaginary parts of $z$ rather than on $|z|$, and which provides a reliable and precise stability threshold for Algorithm~\ref{alg:01}, remains an open problem.}

In any case, a transition region exists—for $L \gtrsim |z|^{1/2}$ in the general case, or $L \gtrsim |z|$ when $z = \eta{\rm i}$—in which, even for moderate values of $L$, the quadrature is not applicable if one relies solely on Algorithm~\ref{alg:01}. The next subsection discusses how to perform these calculations in this range of $z$.

\end{remark}

%
%
%    rho(2*j-1) = -2*(2*j-2)/z*rho(2*j-2)+rho(2*j-3)   +2*(exp(2*z) -1)/z;
%    rho(2*j)   = -2*(2*j-1)/z*rho(2*j-1)+rho(2*j-2)   +2*(exp(2*z) +1)/z;

%\subsection{Phase 2: $|n|\ge n_0$}

\subsection{Phase 2: {Computation of $\omega_n(z)$ and $\rho_n(z)$ for $n=n_0+1,\ldots..., L$}}
This subsection focuses on the evaluation of the weights $\rho_n(z)$ and $\omega_n(z)$ when $|n| \gtrsim |z|^{1/2}$. As previously mentioned, in our implementation, we have set the threshold at
\begin{equation}\label{eq:n0}
n_0 = n_0(z) := \begin{cases}
\left\lceil 2|z|^{1/2}\right\rceil+1, &\text{if } \sigma \ne 0,\\
\left\lceil  |z|\right\rceil+1, &\text{if } \sigma= 0,
                \end{cases}\quad  \text{with }\sigma =\Re z.
\end{equation}
Thus, after Phase 1, we have already computed $(\rho_0(z), \rho_1(z), \ldots, \rho_{n_0}(z))$.

{For the time being take $n_1\ge L$. Using again \eqref{eq:rho:01}, we conclude that
\[
 \bm{\rho}_{\bm{n}}(z) := (\rho_{n_0+1}(z), \rho_{n_0+2}(z), \ldots, \rho_{n_1}(z)) \in \mathbb{C}^{n}
\]
where
\begin{equation}\label{eq:n}
\bm{n} := (n_0, n_1), \quad n := n_1 - n_0.
\end{equation}
solves  a  tridiagonal linear system
\begin{equation}\label{eq:linear:system:phase2}
\mathrm{A}_{\bm{n}}(z)\bm{\rho}_{\bm{n}}(z) =  {\bf b}_{\bm{n}}(z),
\end{equation}
with   the right-hand side  given by
\begin{equation}\label{eq:bs}
{\bf b}_{\bm{n}}(z) := (b_1(z), b_2(z), \ldots, b_n(z))^\top, \quad
{b}_j(z) :=
\begin{cases}
        2\gamma_{n_0+2}(z) + \rho_{n_0}(z), & \text{if $j=1$}, \\
       2\gamma_{n_0+j+1}(z), & \text{if $j=2, 3, \ldots, n-1$}, \\
        2\gamma_{n_1+1}(z) - \rho_{n_1+1}(z), & \text{if $j=n$}.
\end{cases}
\end{equation}
Furthermore,  the matrix of this system can be written
\begin{equation}\label{eq:An}
\mathrm{A}_{\bm{n}}(z) :=
 \frac{2}{z} \mathrm{D}^{1/2}_{\bm{n}}
 \left(
 \mathrm{I}_n + \tfrac{1}{2} z \mathrm{M}_{\bm{n}}
 \right) \mathrm{D}^{1/2}_{\bm{n}},
\end{equation}
with $\mathrm{I}_n$ denoting the identity matrix of order $n$, and
\begin{eqnarray}
\mathrm{D}_{\bm{n}} &:=&
  \begin{bmatrix}
   n_0+2 & \\
   & n_0+3 & \\
   && \ddots & \\
   &&& n_1+1
 \end{bmatrix}, \label{eq:Dn}\\
 \mathrm{M}_{\bm{n}} &:=&
  \begin{bmatrix}
    & \frac{1}{\sqrt{(n_0+2)(n_0+3)}} \\
    -\frac{1}{\sqrt{(n_0+2)(n_0+3)}} & & \frac{1}{\sqrt{(n_0+3)(n_0+4)}} \\
    & -\frac{1}{\sqrt{(n_0+3)(n_0+4)}} & & \ddots \\
    && \ddots & \frac{1}{\sqrt{n_1(n_1+1)}} \\
    && -\frac{1}{\sqrt{n_1(n_1+1)}}&
\end{bmatrix}. \label{eq:Mn}
\end{eqnarray}
}

Observe that $\mathrm{M}_{\bm{n}}$ is a skew-symmetric matrix. Consequently, the spectrum of $\mathrm{M}_{\bm{n}}$, denoted by $\sigma(\mathrm{M}_{\bm{n}})$, consists of purely imaginary numbers.

The tridiagonal system, cf. \eqref{eq:tridiagonalsystem}, can be solved in ${\cal O}(n)$ operations using the Thomas algorithm. The resulting algorithm, presented in Algorithm \ref{alg:02}, requires
%$\rho_{n_0+1}(z)$ and $\rho_{m+1}(z)$
{$\rho_{n_0}(z)$ and $\rho_{n_1+1}(z)$} {(see \eqref{eq:bs})}
and computes $n_1 - n_0$ coefficients, although only $L$ coefficients are returned. These requirements are interconnected. First, $\rho_{n_0+1}(z)$, along with all coefficients $\rho_n(z)$ and $\omega_n(z)$ for $n$ up to $n_0+1$, can be computed in the first algorithm, i.e., Phase 1 of the full procedure. The other term, $\rho_{n_1+1}(z)$, is a more delicate matter, and its calculation will be discussed in next subsection. {In few words, we will show that $\rho_{m}(z)$  be computed for $m$ {\em sufficiently large}, which will ultimately provide the coefficient $\rho_{n_1+1}(z)$ (see Algorithm \ref{alg:03}). Consequently, it is quite possible—and indeed almost necessary (as will be clarified in the next subsection)—to work with $n_1 \ge L$, which entails computing additional coefficients beyond those strictly required.
}

\begin{algorithm}
\begin{algorithmic}[1]
    \Require $L$, $z$, $\rho_{n_0}(z)$, and $\rho_{n_1+1}(z)$ for $n_0\le L-1$ and some  $n_1 \ge L$
    \State Define ${\bf b}_{\bm{n}}(z)$, $\mathrm{D}_{\bm{n}}$, and $\mathrm{M}_{\bm{n}}$ according to \eqref{eq:bs}, \eqref{eq:Dn}, and \eqref{eq:Mn}.
    \State Set ${\bf c}_{\bm{n}} = \frac{z}{2} \mathrm{D}_{\bm{n}}^{-1/2} {\bf b}_{\bm{n}}(z)$.
    \State Solve $(\mathrm{I}_n + \frac{z}2 \mathrm{M}_{\bm{n}}) {\bm y}_{\bm{n}} = {\bf c}_{\bm{n}}$  \label{eq:tridiagonalsystem}.
    \State Set $\bm{\rho}_{\bm{n}}(z) = \mathrm{D}_{\bm{n}}^{-1/2} {\bm y}_{\bm{n}}$.
    \For{$n = n_0:L-1$}
        \State $\displaystyle \omega_{n+1}(z) = - \frac{n+1}{z} \rho_{n}(z) + \gamma_{n+1}(z)$.
    \EndFor
    \State \Return $(\rho_{n_0+1}(z), \ldots, \rho_{L}(z))$, $(\omega_{n_0+1}(z), \ldots, \omega_{L}(z))$.
\end{algorithmic}
\caption{\label{alg:02}Computation of the weights $(\rho_{n_0+1}(z), \ldots, \rho_{L}(z))$ and $(\omega_{{n_0+1}}(z), \ldots, \omega_{L}(z))$.}
\end{algorithm}

We now focus on the stability of the algorithm. In particular, we will show the unique solvability of system \eqref{eq:linear:system:phase2} and provide estimates for the condition number of the matrix $\mathrm{A}_{\bm{n}}(z)$. Notice that in the next result, $\|{\rm B}\|_r$ denotes the $r$-norm of the matrix ${\rm B}$.
\begin{theorem}\label{theo:stability:phase02}
 Let $\sigma = \Re z\ne 0$ and $\eta = \Im z$. Then, the following holds:
\begin{align}
   \|(\mathrm{I}_n+\tfrac12 z \mathrm{M}_{\bm{n}})^{-1}\|_2 & \ \leq\  \begin{cases}
                                                                |z||\sigma|^{-1}, & \text{if $|\eta|\leq \frac{|z|^2}{n_0+2}$}, \\
                                                                \left(1- \frac{|\eta|}{n_0+2} \right)^{-1}, & \text{if $|\eta|\geq \frac{|z|^2}{n_0+2}$}.
     \end{cases}\label{eq:01:theo:stability:phase02}
\\
    \|(\mathrm{I}_n+\tfrac12 z \mathrm{M}_{\bm{n}})\|_r & \ \leq\ 1 +\tfrac12 |z| \|\mathrm{M}_{\bm{n}}\|_r<\left(1+\frac{|z|}{n_0+2}\right), \quad r\in [1,\infty].
    \label{eq:02:theo:stability:phase02}
\end{align}
In particular, the condition number of the matrix
$\mathrm{A}_{\bm{n}}(z)$ in Algorithm \ref{alg:02} when $n_0=n_0(z)$ is taken as in \eqref{eq:n0}   can be estimated by
\begin{equation}\label{eq:03:theo:stability:phase02}
 \kappa_2(\mathrm{A}_{\bm{n}}(z))\leq \begin{cases}
                                                 2 |\sigma^{-1}z|                & \text{if $\sigma\ne 0$}, \\
                                                    2 & \text{if $\sigma =0$}.
     \end{cases}
\end{equation}
\end{theorem}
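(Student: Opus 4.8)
The plan is to reduce both estimates to a spectral computation, exploiting that $\mathrm{M}_{\bm{n}}$ is real and skew-symmetric, hence a normal matrix that is unitarily diagonalizable with purely imaginary spectrum, say $\mathrm{i}\lambda_1,\ldots,\mathrm{i}\lambda_n$ with $\lambda_j\in\mathbb{R}$. The matrix $\mathrm{I}_n+\tfrac12 z\mathrm{M}_{\bm{n}}$ is then itself normal, since it commutes with its conjugate transpose $\mathrm{I}_n-\tfrac12\bar z\mathrm{M}_{\bm{n}}$ (both products equal $\mathrm{I}_n+\tfrac12(z-\bar z)\mathrm{M}_{\bm{n}}-\tfrac14|z|^2\mathrm{M}_{\bm{n}}^2$). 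Consequently its $2$-norm and the $2$-norm of its inverse are governed exactly by its eigenvalues $\mu_j:=1+\tfrac12\mathrm{i}z\lambda_j$; in particular $\|(\mathrm{I}_n+\tfrac12 z\mathrm{M}_{\bm{n}})^{-1}\|_2=(\min_j|\mu_j|)^{-1}$. Writing $z=\sigma+\mathrm{i}\eta$ gives $\mu_j=(1-\tfrac12\eta\lambda_j)+\tfrac{\mathrm{i}}{2}\sigma\lambda_j$, so that
\[
|\mu_j|^2=\Big(1-\tfrac12\eta\lambda_j\Big)^2+\tfrac14\sigma^2\lambda_j^2=1-\eta\lambda_j+\tfrac14|z|^2\lambda_j^2=:\phi(\lambda_j).
\]

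For \eqref{eq:01:theo:stability:phase02} I would bound $\min_j|\mu_j|$ from below in two complementary ways. First, $\phi$ is an upward parabola in its argument whose unconstrained minimum, attained at $t^\ast=2\eta/|z|^2$, equals $\phi(t^\ast)=1-\eta^2/|z|^2=\sigma^2/|z|^2$; since this lower bound is valid for every real argument it yields $\min_j|\mu_j|\ge|\sigma|/|z|$, hence the first branch $\|(\,\cdots)^{-1}\|_2\le|z|/|\sigma|$. Second, retaining only the real part, $|\mu_j|\ge|1-\tfrac12\eta\lambda_j|\ge 1-\tfrac12|\eta|\,|\lambda_j|$; combined with the spectral bound $\max_j|\lambda_j|=\|\mathrm{M}_{\bm{n}}\|_2<2/(n_0+2)$ established below, this gives $|\mu_j|>1-|\eta|/(n_0+2)$ and hence the second branch $\|(\,\cdots)^{-1}\|_2\le(1-|\eta|/(n_0+2))^{-1}$. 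The case split on $|\eta|$ versus $|z|^2/(n_0+2)$ is precisely the condition $|t^\ast|\lessgtr 2/(n_0+2)$ that decides which of the two bounds is the sharper applicable one.

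For \eqref{eq:02:theo:stability:phase02} I would use the triangle inequality $\|\mathrm{I}_n+\tfrac12 z\mathrm{M}_{\bm{n}}\|_r\le 1+\tfrac12|z|\,\|\mathrm{M}_{\bm{n}}\|_r$ and estimate $\|\mathrm{M}_{\bm{n}}\|_r$ directly from the entries in \eqref{eq:Mn}. Because $\mathrm{M}_{\bm{n}}$ is tridiagonal, both $\|\mathrm{M}_{\bm{n}}\|_1$ (largest absolute column sum) and $\|\mathrm{M}_{\bm{n}}\|_\infty$ (largest absolute row sum) are bounded by $\frac{1}{\sqrt{(n_0+2)(n_0+3)}}+\frac{1}{\sqrt{(n_0+3)(n_0+4)}}<\frac{2}{n_0+2}$, using that each factor $\sqrt{(n_0+k)(n_0+k+1)}>n_0+2$; the bound for general $r\in[1,\infty]$ then follows by Riesz–Thorin interpolation, $\|\mathrm{M}_{\bm{n}}\|_r\le\|\mathrm{M}_{\bm{n}}\|_1^{1/r}\|\mathrm{M}_{\bm{n}}\|_\infty^{1-1/r}<2/(n_0+2)$. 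This also supplies the spectral bound $\|\mathrm{M}_{\bm{n}}\|_2<2/(n_0+2)$ needed above. Finally, for \eqref{eq:03:theo:stability:phase02} I would observe that, up to the scalar $2/z$ and the diagonal congruence by $\mathrm{D}_{\bm{n}}^{1/2}$ in \eqref{eq:An}, the conditioning of $\mathrm{A}_{\bm{n}}(z)$ is controlled by the two estimates just obtained for $\mathrm{I}_n+\tfrac12 z\mathrm{M}_{\bm{n}}$; substituting the threshold $n_0=n_0(z)$ of \eqref{eq:n0} controls the factor $1+|z|/(n_0+2)$, and pairing it with the appropriate branch of \eqref{eq:01:theo:stability:phase02} (the parabola branch when $\sigma\ne 0$, the real-part branch when $\sigma=0$) yields the stated constants.

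The main obstacle is \eqref{eq:01:theo:stability:phase02}: the entire reduction hinges on recognizing the normality of $\mathrm{I}_n+\tfrac12 z\mathrm{M}_{\bm{n}}$, so that the inverse norm is exactly $1/\min_j|\mu_j|$, and on the observation that $\phi(t)=1-\eta t+\tfrac14|z|^2t^2$ has global minimum $\sigma^2/|z|^2$, which is what converts the skew-symmetric structure into the clean, $\bm{n}$-independent decay $|z|/|\sigma|$. The remaining points requiring care are verifying that the spectrum indeed satisfies $\max_j|\lambda_j|<2/(n_0+2)$ so that the real-part branch is non-vacuous, and checking the transition at $|\eta|=|z|^2/(n_0+2)$ between the two regimes.
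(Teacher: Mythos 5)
Your proposal is correct and follows essentially the same route as the paper: both reduce the $2$-norm estimates to the purely imaginary spectrum of the skew-symmetric matrix $\mathrm{M}_{\bm{n}}$ and then minimize the quadratic $1-\eta t+\tfrac14|z|^2t^2$, whose global minimum $\sigma^2/|z|^2$ yields the first branch of \eqref{eq:01:theo:stability:phase02} and whose endpoint/real-part estimate yields the second, while \eqref{eq:02:theo:stability:phase02} comes in both cases from the triangle inequality, the row/column sums of $\mathrm{M}_{\bm{n}}$, and Riesz--Thorin interpolation. The only cosmetic differences are that you invoke normality of $\mathrm{I}_n+\tfrac12 z\mathrm{M}_{\bm{n}}$ to identify singular values with moduli of eigenvalues (the paper instead computes $(\mathrm{I}_n+\tfrac12 z\mathrm{M}_{\bm{n}})^*(\mathrm{I}_n+\tfrac12 z\mathrm{M}_{\bm{n}})$ and applies spectral mapping) and you bound the spectral radius by $\|\mathrm{M}_{\bm{n}}\|_2$ rather than by Gershgorin's theorem; and, like the paper, you treat \eqref{eq:03:theo:stability:phase02} as a quick consequence of the two norm bounds.
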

\begin{proof}
First, observe that
\[
 \|{\rm M}_{\bm{n}}\|_{1}= \|{\rm M}_{\bm{n}}\|_{\infty}\leq \frac{2}{n_0+2}.
\]
This estimate, along with the Riesz-Thorin theorem, implies \eqref{eq:02:theo:stability:phase02}.

Let $S_{\bm{n}}(z)$ denote the set of singular values of the matrix
\[
 \mathrm{I} + \frac{1}{2} z \mathrm{M}_{\bm n}.
\]
By definition, the elements of $S_{\bm{n}}(z)$ are the positive square roots of the eigenvalues of the matrix
 \[
 \left(\mathrm{I}_n+\tfrac12 z \mathrm{M}_{\bm{n}}\right)^*
 \left(\mathrm{I}_n+\tfrac12 z \mathrm{M}_{\bm{n}}\right) = \mathrm{I}_n +\eta {\rm i}\mathrm{M}_{\bm{n}}-\frac{1}{4}|z|^2 \mathrm{M}_{\bm{n}}^2.
 \]
(Note that $\mathrm{M}_{\bm{n}}^\top =-\mathrm{M}_{\bm{n}}$.) Therefore,
\[
S_{\bm{n}}(z) = \{ ( 1-2\eta\alpha +|z|^2 \alpha^2)^{1/2} \ : \ \pm 2\alpha {\rm i}\in \rho(\mathrm{M}_{\bm{n}})\}\subset (0,\infty),
\]
where $\rho(\mathrm{M}_{\bm{n}})\subset{\rm i}\mathbb{R}$ denotes the spectrum of $\mathrm{M}_{\bm{n}}$.

By the Gershgorin Circle theorem, we have
\[
\alpha\in  \left(-\frac{1}{n_0+2},\frac{1}{n_0+2}\right),
\]
so that the constants $C(n_0)>c(n_0)$, given by
\[
\begin{aligned}
 C^2(n_0) &\ := \  \max_{x\in [-\frac{1}{n_0+2},\frac{1}{n_0+2}]} (1-\eta x) ^2+(\sigma x)^2,\\
 c^2(n_0) &\ := \  \min_{x\in [-\frac{1}{n_0+2},\frac{1}{n_0+2}]} (1-\eta x) ^2+(\sigma x)^2,
 \end{aligned}
\]
are as upper and lower bounds for $S_{\bm{n}}(z)$.

Clearly,
\[
 C^2(n_0)\leq 1+\frac{2|\eta|}{n_0+2}+\frac{|z|^2}{(n_0+2)^2}\leq \left(1+\frac{|z|}{n_0+2}\right)^2,
\]
which, in turn, provides an alternative proof of \eqref{eq:02:theo:stability:phase02} for $2-$norm.

We now focus on proving \eqref{eq:01:theo:stability:phase02}. Without loss of generality we can assume  $\eta\ge 0$. Observe that, with
\[
  f(x): =  1-2\eta x +|z|^2 x^2,
\]
it follows that
\[
 \argmin_{x\in\mathbb{R}} f(x) = \frac{\eta}{\sigma^2+\eta^2} = \frac{\eta}{|z|^2}=:x_0, \quad \min_{x\in\mathbb{R}}f(x) = f(x_0)=\frac{\sigma^2}{\sigma^2+\eta^2}=\frac{\sigma^2}{|z|^2}.
 \]
Thus,
 \[
 c^2(n_0)\geq \begin{cases}
\frac{\sigma^2}{|z|^2}, & \text{if }x_0\leq \frac{1}{n_0+2}, \\
f\left(\frac{1}{n_0+2}\right), & \text{if }x_0> \frac{1}{n_0+2}.
             \end{cases}
 \]
Furthermore,
\[
 f\left(\frac{1}{n_0+2}\right) = 1-\frac{2\eta}{n_0+2}+\frac{\sigma^2+\eta^2}{(n_0+2)^2}= \left(1-\frac{\eta}{ n_0+2}\right)^2+\frac{\sigma^2}{(n_0+2)^2},
\]
which concludes the proof.

Finally, \eqref{eq:03:theo:stability:phase02} is a straightforward consequence of  \eqref{eq:01:theo:stability:phase02} and  \eqref{eq:02:theo:stability:phase02}.
 \end{proof}
\begin{remark} Theorem \ref{theo:stability:phase02} implies the stability of the algorithm for any $z$.
Certainly, one might be concerned about the case where $0 < |\sigma| \ll |z|$ due to the factor $|z|||\sigma^{-1}|$. Several key observations can be made in this situation. First, if $\sigma$ is very small, we can incorporate this term into the function $f$ and shift all calculations to the purely oscillatory integral case. On the other hand, if the second-phase algorithm is applied in its current form, we note that the estimate of the norm $|(\mathrm{I}_n+\tfrac12 z \mathrm{M}_{\bm{n}})^{-1}|2$ depends on the distance of $z$ from the discrete set of eigenvalues of ${\mathrm{M}_{\bm n}}$. In practice, this distance is often larger than the pessimistic estimate derived in the proof. Therefore, the condition number of the matrix is, in practice, better than the bound established in the theorem for this case. Furthermore, a small increase or decrease in $n_0$ by a few units can resolve any potential ill-conditioning issues.

Finally, we emphasize that our computations never exhibited numerical instability, even in cases where the theorem suggests potential issues.
\end{remark}

\subsection{Computation of $\rho_{m }(z)$ for some $m\ge L$}

As pointed out, Algorithm \ref{alg:02} requires computing $\rho_{m}(z)$ for some $m \ge L$. Here, we describe how this computation can be performed.

Set   $ m_1 > m_0 > |z| $,
\[
\bm{m}:=(m_0,m_1),\quad m:=m_1-m_0.
\]
We assume $ m $ to be odd  so that $ m_{\rm c}: = (m_1 + m_0+1)/2 $ is an integer corresponding to the middle entry of any vector of length $m$ with indices from $m_0+1$ to $m_1$.

Following the notation of the preceding section (see \eqref{eq:linear:system:phase2}), we have
\begin{equation}\label{eq:leftovers:01}
 {\rm A}_{\bm{m}}(z){\bm \rho}_{\bm m}(z) = {\bf b}_{\bm m}(z).
\end{equation}
(Note that ${\rm A}_{\bm{m}}(z)$ is now a $m\times m$ diagonally dominant matrix and therefore invertible.)

Instead, we solve:
\begin{equation}\label{eq:46}
  {\rm A}_{\bm{m}}(z)\widetilde{\bm \rho}_{\bm m}(z) = \widetilde{\bf b}_{\bm m}(z), \quad {\rm A}_{\bm{m}}(z)=\frac{2}{z} \mathrm{D}^{1/2}_{\bm{m}}
 \left(
 \mathrm{I}_m + \tfrac{1}{2} z \mathrm{M}_{\bm{m}}
 \right) \mathrm{D}^{1/2}_{\bm{m}},
 \end{equation}
 where
 \begin{equation}
 \label{eq:47}
\widetilde{\bf b}_{\bm{m}}(z) := (2\gamma_{m_0+2}(z), 2\gamma_{m_0+3}(z), \ldots, 2\gamma_{m_1+1}(z))^\top.
\end{equation}
Thus, the contributions of $\rho_{m_0}(z)$ and $\rho_{m_1+1}(z)$ in the first and last entries of the right-hand side vector ${\bf b}_{\bm m}(z)$ have been removed.

The entry $ \widetilde{\rho}_{m_{\rm c}}(z) $ of the vector
\[
 \widetilde{\bm \rho}_{\bm m}(z) = \big( \widetilde{\rho}_{m_0+1}(z),\widetilde{\rho}_{m_0+2}(z),\dots,\widetilde{\rho}_{m_1}(z) \big)^\top
\]
which solves the linear system \eqref{eq:46}, is the quantity of interest in the third algorithm and corresponds to the middle entry of this vector.
The key idea is that $ \widetilde{\rho}_{m_{\rm c}} \approx \rho_{m_{\rm c}} $ within machine precision, provided that $ \bm{m} $ is appropriately chosen. For this {analysis}, we require the following lemma, which provides an estimate of the magnitude of the coefficient(s) being computed.
\begin{lemma}
For $\sigma =\Re z \leq 0$, the following bound holds:
 \begin{equation}\label{eq:bmtilde}
  |\rho_{n}(z)|\leq { \min\left\{2 , \left[\frac{1}{2|\sigma|}\left(\log(n+1)+2\right)^{1/2}\right]^{1/2}  \right\}}.
 \end{equation}
\end{lemma}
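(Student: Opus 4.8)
The plan is to establish the two entries of the minimum separately. I first pass to a real trigonometric integral by the substitution $s=1+\cos\theta$, which, since $U_n(\cos\theta)=\sin((n+1)\theta)/\sin\theta$ and $\mathrm ds=-\sin\theta\,\mathrm d\theta$, turns \eqref{eq:rhon} into
\[
\rho_n(z)=\int_0^\pi \sin\big((n+1)\theta\big)\,e^{z(1+\cos\theta)}\,\mathrm d\theta .
\]
Because $\sigma=\Re z\le 0$ and $1+\cos\theta\ge 0$, the exponential factor has modulus $e^{\sigma(1+\cos\theta)}\le 1$, so that $|\rho_n(z)|\le\int_0^\pi|\sin((n+1)\theta)|\,\mathrm d\theta=2$; the last equality is exact for every integer $n+1\ge1$. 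This already gives the first competitor in the minimum, uniformly in $z$.

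For the decay estimate I would work in the original variable and write $\rho_n(z)=\int_0^2 U_n(s-1)\,e^{-|\sigma| s}e^{\mathrm i\eta s}\,\mathrm ds$ with $\eta=\Im z$. Splitting the exponential symmetrically and applying Cauchy--Schwarz gives
\[
|\rho_n(z)|^2\le\Big(\int_0^2 e^{-|\sigma| s}\,\mathrm ds\Big)\Big(\int_0^2 |U_n(s-1)|^2\,e^{-|\sigma| s}\,\mathrm ds\Big)\le\frac{1}{|\sigma|}\int_0^2 |U_n(s-1)|^2\,e^{-|\sigma| s}\,\mathrm ds .
\]
The first factor, bounded by $|\sigma|^{-1}$, is the source of the $|\sigma|^{-1/2}$ rate. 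For the second factor I would use the two elementary pointwise bounds for the Chebyshev polynomial of the second kind, $|U_n(s-1)|\le n+1$ and $|U_n(s-1)|\le\varrho_0(s)$ with $\varrho_0$ as in \eqref{eq:21} (the latter because $|\sin((n+1)\theta)/\sin\theta|\le1/\sin\theta$), so that $|U_n(s-1)|^2\le\min\{(n+1)^2,(s(2-s))^{-1}\}$. Since $(s(2-s))^{-1}\sim\tfrac12 s^{-1}$ near the left endpoint, the integrand is of size $\tfrac12 s^{-1}$ away from $s=0$ and is capped at $(n+1)^2$ for $s$ below the crossover scale $s_\ast\sim(n+1)^{-2}$ where the two bounds meet; integrating $\tfrac12 s^{-1}$ from $s_\ast$ thus produces the logarithmic term $\tfrac12\log((n+1)^{2})=\log(n+1)$. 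The contribution from the right endpoint $s=2$ is harmless because there $e^{-|\sigma| s}\le e^{-2|\sigma|}$, and it can in any case be absorbed through Lemma~\ref{lemma:03:CleCur}.

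Carrying out this estimate and keeping track of the exponential cut-off at scale $s\sim|\sigma|^{-1}$ yields a bound of the form $\int_0^2|U_n(s-1)|^2e^{-|\sigma| s}\,\mathrm ds\le\tfrac12(\log(n+1)+2)$, hence $|\rho_n(z)|\le\big[\tfrac1{2|\sigma|}(\log(n+1)+2)\big]^{1/2}$; together with $|\rho_n(z)|\le2$ this gives the stated minimum. I expect the main obstacle to be precisely this constant bookkeeping in the second factor: the transition region around $s_\ast\sim(n+1)^{-2}$ must be balanced against the exponential scale $s\sim|\sigma|^{-1}$, and the two regimes $|\sigma|\le(n+1)^2$ and $|\sigma|>(n+1)^2$ behave differently, so pinning down the clean coefficient $\tfrac1{2|\sigma|}$ and the exact logarithmic power --- rather than a cruder $C|\sigma|^{-1/2}$ estimate that also follows directly from $|U_n(s-1)|\le\varrho_0(s)$ and Lemma~\ref{lemma:03:CleCur} --- is the delicate point. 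The trigonometric reduction, the bound $2$, and the Cauchy--Schwarz split are otherwise routine.
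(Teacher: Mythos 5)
Your first entry of the minimum is proved exactly as in the paper (bound $|e^{zs}|\le 1$, substitute $s=1+\cos\theta$, integrate $|\sin((n+1)\theta)|$ exactly to get $2$), so that part is fine. The second entry is where the proposal genuinely breaks down, and in two distinct places, both of which you flagged as ``constant bookkeeping'' but which are in fact fatal to the stated bound. First, your Cauchy--Schwarz split puts half of the exponential weight into each factor, so the exponential factor can only give $\int_0^2 e^{-|\sigma|s}\,{\rm d}s\le |\sigma|^{-1}$. The paper instead splits the integrand as $|U_n(s-1)|\cdot|e^{zs}|$, so the \emph{whole} exponential gets squared inside one factor: $\int_0^2|e^{zs}|^2\,{\rm d}s=\int_0^2 e^{2\sigma s}\,{\rm d}s\le \tfrac{1}{2|\sigma|}$, which is precisely the source of the coefficient $\tfrac1{2|\sigma|}$. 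Your split could only recover this factor of $2$ if the weighted polynomial factor were at most half of the unweighted one; but as $\sigma\to 0^-$ the weight tends to $1$, so no such gain exists. Concretely, your intermediate claim
\[
\int_0^2|U_n(s-1)|^2e^{-|\sigma|s}\,{\rm d}s\le\tfrac12\big(\log(n+1)+2\big)
\]
is false: for $n=1$ and $\sigma\to0^-$ the left side tends to $\int_{-1}^1(2x)^2\,{\rm d}x=8/3$, while the right side is $\tfrac12(\log 2+2)\approx 1.35$.

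Second, the pointwise bound $|U_n(s-1)|^2\le\min\{(n+1)^2,(s(2-s))^{-1}\}$ is intrinsically too lossy to produce coefficient $1$ in front of $\log(n+1)$: it discards the fact that $\sin^2((n+1)\theta)$ oscillates with mean $\tfrac12$, and it also makes you pay the same logarithm a second time at the right endpoint $s=2$, where your dismissal via $e^{-|\sigma|s}\le e^{-2|\sigma|}$ gives nothing when $|\sigma|$ is small (then $e^{-2|\sigma|}\approx 1$, yet the bound must hold uniformly in $\sigma<0$). Carried out honestly, your route yields at best $|\rho_n(z)|\le\big[\tfrac{1}{|\sigma|}(2\log(n+1)+C)\big]^{1/2}$ --- the right shape, but roughly a factor $4$ worse inside the bracket than the statement. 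The paper avoids both losses: after the clean Cauchy--Schwarz split it evaluates the polynomial factor \emph{exactly}, using the expansion $\big[\tfrac{\sin((n+1)\theta)}{\sin\theta}\big]^2=(n+1)+2\sum_{j=1}^n(n+1-j)\cos(2j\theta)$ to obtain $\int_0^\pi\tfrac{\sin^2((n+1)\theta)}{\sin\theta}\,{\rm d}\theta=2\sum_{j=0}^n\tfrac{1}{2j+1}$, a harmonic-type sum bounded by $\log(n+1)+2$ (checked directly for $n\le 7$). The averaging of $\sin^2$ and the correct treatment of both endpoints are captured exactly in that identity; this is the step that no pointwise bound on $|U_n|$ can reproduce, so the exact trigonometric evaluation is the missing idea rather than an optional refinement.
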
\begin{proof}
{
We begin by noting that
\[
\begin{aligned}
 |\rho_n(z)| \ \leq\ & \frac{1}{n+1}\int_{0}^{2} |T'_{n+1}(s-1)|\,{\rm d}s
  = \int_{0}^\pi |\sin((n+1)\theta)|\,{\rm d}\theta\\
 =\ & \sum_{j=0}^{n}\int_{j\pi/(n+1)}^{(j+1)\pi/(n+1)} |\sin((n+1)\theta)|\,{\rm d}\theta = 2.
\end{aligned}
\]
On the other hand,
\begin{equation}\label{eq:LUX:byRosalia:02}
\begin{aligned}
 |\rho_n(z)| \ \leq\ &
 \frac{1}{n+1}\left[\int_{-1}^{1} |T'_{n+1}(x)|^{2}\,{\rm d}x\right]^{1/2}
 \left[\int_{0}^{2} |e^{2zs}|\,{\rm d}s\right]^{1/2} \\
 &\leq |2\sigma|^{-1/2}
 \left[\int_{0}^\pi \left|\frac{\sin((n+1)\theta)}{\sin \theta}\right|^{2}\sin\theta\,{\rm d}\theta\right].
\end{aligned}
\end{equation}
The identity (see \eqref{exp:Tj})
\[
 \left[\frac{\sin((n+1)\theta)}{\sin \theta}\right]^{2}
   = \left[\sum_{j=0}^{n} e^{{\rm i}(2j-n)\theta}\right]^{2}
   = (n+1) + 2\sum_{j=1}^{n} (n+1-j)\cos(2j\theta),
\]
yields
\[\label{eq:LUX:byRosalia}
\begin{aligned}
\int_{0}^\pi \left|\frac{\sin((n+1)\theta)}{\sin \theta}\right|^{2}\sin\theta\,{\rm d}\theta
 &= 2(n+1) + 4\sum_{j=1}^{n}\frac{n+1-j}{1-4j^{2}} \\
 &= 2\sum_{j=0}^{n}\frac{1}{2j+1}
  = 2\sum_{j=1}^{2n+1}\frac{1}{j} - \sum_{j=1}^{n}\frac{1}{j}\\
 &\leq \log\left(\frac{(2n+1)^{2}}{n}\right) + \gamma + \frac{1}{2n+2},
\end{aligned}
\]
where we have used \cite[Eq. (2.10.8)]{NIST:DLMF}, and where $\gamma \approx 0.5772$ denotes the Euler–Mascheroni constant.}

{As   a consequence, for $n\ge 8$, we obtain the following simpler bound, which is only slightly weaker:
\begin{equation}\label{eq:bound:Tn}
\int_{0}^\pi \left|\frac{\sin((n+1)\theta)}{\sin \theta}\right|^{2}\sin\theta\,{\rm d}\theta
 \leq \log(n+1)+2.
\end{equation}
For $n\le 7$, this bound also holds and can be verified directly by explicit computation.}

{Finally, combining \eqref{eq:LUX:byRosalia:02} and  \eqref{eq:bound:Tn} yields the last part of the proof.}
\end{proof}

\begin{proposition}
{Assume that $|z|\ge 1$} and let $r>0$.
Take $m_0, m_1 \in \mathbb{N}$ with $m_1 > m_0 \ge (1+r)|z| - 2$, and suppose that
$m = m_1 - m_0$ is odd.  Set $\bm{m}=(m_0,m_1)$ and let $\widetilde{\rho}_{\bm{m}}(z)$ be given by \eqref{eq:46}. Then,
\[
| \widetilde{\rho}_{m_{\rm c}}(z)- {\rho}_{m_{\rm c}}(z)|\leq
{3} r^{-1}(1+r)^{-(m+1)/2},
\]
where $m_{\rm c} :=\frac{m_1+m_0+1}{2}$.
\end{proposition}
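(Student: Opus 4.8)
The plan is to recognise that the quantity $\rho_{m_{\rm c}}(z)-\widetilde{\rho}_{m_{\rm c}}(z)$ is the \emph{error committed by discarding the two boundary data} $\rho_{m_0}(z)$ and $\rho_{m_1+1}(z)$, and that this error is transported to the central index only after travelling a distance $(m-1)/2$ through the inverse of a strongly diagonally dominant tridiagonal matrix, hence it is exponentially small. Concretely, subtracting \eqref{eq:46} from the exact system \eqref{eq:leftovers:01} and comparing the right-hand side \eqref{eq:bs} (with $\bm{n}$ replaced by $\bm{m}$) with \eqref{eq:47}, I would first observe that the error vector $\mathbf{e} := \bm{\rho}_{\bm{m}}(z) - \widetilde{\bm{\rho}}_{\bm{m}}(z)$ solves
\[
 \mathrm{A}_{\bm{m}}(z)\,\mathbf{e} = \mathbf{g}, \qquad \mathbf{g} := \big(\rho_{m_0}(z),\,0,\ldots,0,\,-\rho_{m_1+1}(z)\big)^\top,
\]
since $\widetilde{\mathbf{b}}_{\bm{m}}$ differs from $\mathbf{b}_{\bm{m}}$ in precisely the first and last entries. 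Because $m$ is odd, the sought coefficient sits at position $(m+1)/2$ of the solution vector, i.e.\ at distance exactly $(m-1)/2$ from both the first and the last component of $\mathbf{g}$, so that
\[
 \rho_{m_{\rm c}}(z)-\widetilde{\rho}_{m_{\rm c}}(z)
 = \big(\mathrm{A}_{\bm{m}}(z)^{-1}\big)_{\frac{m+1}{2},\,1}\,\rho_{m_0}(z)
 - \big(\mathrm{A}_{\bm{m}}(z)^{-1}\big)_{\frac{m+1}{2},\,m}\,\rho_{m_1+1}(z),
\]
and everything reduces to bounding the two far-off-diagonal inverse entries and the boundary data.

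For the boundary data I would invoke the preceding lemma, using only its crude uniform bound in \eqref{eq:bmtilde}, namely $|\rho_{m_0}(z)|\le 2$ and $|\rho_{m_1+1}(z)|\le 2$. The decisive ingredient is the exponential decay of the off-diagonal entries of $\mathrm{A}_{\bm{m}}(z)^{-1}$. I would exploit the factorisation in \eqref{eq:46}, $\mathrm{A}_{\bm{m}}(z)^{-1} = \tfrac{z}{2}\,\mathrm{D}_{\bm{m}}^{-1/2}\,(\mathrm{I}_m + \tfrac{z}{2}\mathrm{M}_{\bm{m}})^{-1}\,\mathrm{D}_{\bm{m}}^{-1/2}$, together with the key smallness estimate
\[
 \big\|\tfrac{z}{2}\mathrm{M}_{\bm{m}}\big\|_2 \le \frac{|z|}{m_0+2} \le \frac{1}{1+r},
\]
which is exactly where the hypothesis $m_0+2\ge (1+r)|z|$ enters, and which relies on the bound $\|\mathrm{M}_{\bm{m}}\|_2\le 2/(m_0+2)$ already established in the proof of Theorem~\ref{theo:stability:phase02}. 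Since this spectral norm is strictly below $1$, the Neumann series $(\mathrm{I}_m+\tfrac{z}{2}\mathrm{M}_{\bm{m}})^{-1}=\sum_{k\ge 0}(-\tfrac{z}{2}\mathrm{M}_{\bm{m}})^k$ converges.

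The geometric decay then follows from a bandwidth/parity argument: as $\mathrm{M}_{\bm{m}}$ is tridiagonal with zero diagonal, $(\mathrm{M}_{\bm{m}}^k)_{ij}=0$ whenever $k<|i-j|$ or $k\not\equiv|i-j|\pmod 2$, so only terms with $k\ge|i-j|$ of matching parity survive and
\[
 \big|\big((\mathrm{I}_m+\tfrac{z}{2}\mathrm{M}_{\bm{m}})^{-1}\big)_{ij}\big|
 \le \sum_{\substack{k\ge|i-j|\\ k\equiv|i-j|\,(2)}} (1+r)^{-k}
 = \frac{(1+r)^{-|i-j|}}{1-(1+r)^{-2}}.
\]
Inserting $|i-j|=(m-1)/2$, using $(\mathrm{D}_{\bm{m}}^{-1/2})_{kk}(\mathrm{D}_{\bm{m}}^{-1/2})_{ll}\le (m_0+2)^{-1}$ and $\tfrac{|z|}{2}(m_0+2)^{-1}\le \tfrac{1}{2(1+r)}$, and combining with $|\rho_{m_0}|,|\rho_{m_1+1}|\le 2$, I obtain a bound of the form $C(r)\,(1+r)^{-(m+1)/2}$ with $C(r)=O(r^{-1})$ as $r\downarrow 0$.

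I expect the main obstacle to be not the exponential rate---which is robustly $(1+r)^{-1}$---but the tracking of constants needed to collapse $C(r)$ into the clean prefactor $3\,r^{-1}$ claimed in the statement. The crude norm bound $|(\mathrm{M}_{\bm{m}}^k)_{ij}|\le\|\mathrm{M}_{\bm{m}}\|_2^k$ is slightly wasteful: for a monotone path the leading term $k=|i-j|$ is a \emph{telescoping product} of the off-diagonal weights $\{((m_0+1+l)(m_0+2+l))^{-1/2}\}$, which are strictly smaller than $(m_0+2)^{-1}$ in the interior, and the decaying factor $(\mathrm{D}_{\bm{m}}^{-1/2})_{m_{\rm c},m_{\rm c}}=(m_{\rm c})^{-1/2}$ near the middle index provides further room. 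Combining the parity-restricted geometric tail with these sharpenings is what I would use to absorb the residual $r$-dependence and reach the stated constant; this bookkeeping, rather than any conceptual difficulty, is the crux.
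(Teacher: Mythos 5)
Your proposal follows, in essence, the same route as the paper's own proof: the factorization $\mathrm{A}_{\bm m}(z)^{-1}=\tfrac z2\mathrm{D}_{\bm m}^{-1/2}(\mathrm{I}_m+\tfrac z2\mathrm{M}_{\bm m})^{-1}\mathrm{D}_{\bm m}^{-1/2}$ from \eqref{eq:46}, the Neumann series made convergent by $\|\tfrac z2\mathrm{M}_{\bm m}\|\le|z|/(m_0+2)\le(1+r)^{-1}$, the tridiagonal bandwidth argument that localizes the influence of the discarded boundary data, and the lemma's crude bound $|\rho_n(z)|\le2$. Your formulation is in fact cleaner than the paper's: subtracting \eqref{eq:46} from \eqref{eq:leftovers:01} so that the error solves $\mathrm{A}_{\bm m}(z)\mathbf{e}=\mathbf{g}$ with $\mathbf{g}$ supported on the first and last entries makes the localization statement exact, whereas the paper truncates the series applied to ${\bf b}_{\bm m}(z)$ and asserts $\check\rho_{m_{\rm c}}(z)=\widetilde\rho_{m_{\rm c}}(z)$, which, read literally, neglects the tail of the series acting on $\widetilde{\bf b}_{\bm m}(z)$. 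The parity restriction in your geometric sum is also a correct refinement the paper does not use.

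The genuine gap is the prefactor, and—contrary to your closing assessment—it is not bookkeeping. Carried out, your estimate gives the constant $C(r)=\tfrac{2(1+r)^2}{r(r+2)}$, and $C(r)\le 3r^{-1}$ fails once $r>\tfrac{\sqrt{33}-1}{4}\approx1.19$, with a deficit growing like $\tfrac23r$. The sharpenings you propose (telescoping path products, extra diagonal factors) all act on the matrix side and cannot help where the bound bites: for the smallest admissible case $m=1$ (odd, $m_1>m_0$) the matrix $\mathrm{M}_{\bm m}$ of \eqref{eq:Mn} is the $1\times1$ zero matrix, $(\mathrm{I}+\tfrac z2\mathrm{M}_{\bm m})^{-1}=1$ exactly, and the error equals $\bigl|\tfrac{z}{2(m_0+2)}\bigl(\rho_{m_0}(z)-\rho_{m_1+1}(z)\bigr)\bigr|$; with only $|\rho_n|\le2$ available the best possible bound is $\tfrac{2}{1+r}$, which exceeds the claimed $3r^{-1}(1+r)^{-1}$ for every $r>3/2$. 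The missing factor $r^{-1}$ must therefore come from the data, not from the inverse: for $n\ge m_0\ge(1+r)|z|-2$, relation \eqref{eq:gamman} gives $\rho_n(z)=\tfrac{z}{n+1}\bigl(\gamma_{n+1}(z)-\omega_{n+1}(z)\bigr)$, whence $|\rho_n(z)|\le\tfrac{2+2|z|}{n+1}\le\tfrac4r$ for $|z|\ge1$, i.e.\ $\|\mathbf{g}\|_\infty\le\min\{2,4/r\}$; feeding this in (and bounding the relevant row jointly, $|[\mathrm{M}_{\bm m}^j\mathbf{g}]_{m_{\rm c}}|\le\|\mathrm{M}_{\bm m}^j\|_\infty\|\mathbf{g}\|_\infty$, rather than the two inverse entries separately, which double counts) is what yields an $O(r^{-1})$ prefactor uniformly in $r$. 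To be fair, the paper's write-up is loose at exactly the same spot—it silently uses $\|{\bf b}_{\bm m}(z)\|_\infty\le5/|z|$, which the quoted bound $|\rho_n|\le2$ does not deliver—so your argument is at rough parity with the published one; but as a self-contained proof of the stated inequality it does not reach the claimed constant, and the obstruction is a missing decay property of the weights $\rho_n$, not constant-chasing.
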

\begin{proof}
{According to the most pessimistic bound given in the previous lemma and the definition~\eqref{eq:gamman}}, we can bound
\[
\| {\bf b}(z)\|_\infty\leq  \max\{|\rho_{m_0}(z)|,|\rho_{m_1+1}(z)|\} + \frac{1}{|z|}< {3}.
\]
By Proposition \ref{theo:stability:phase02},
\[
\left\|\frac{z}2\mathrm{M}_{\bm{m}}\right\|_{\infty}\leq \frac{|z|}{m_0+2}\le \frac{1}{1+r}<1,
\]
it follows that
\[
{\bm{\rho}}_m(z)= \frac{z}2 \mathrm{D}_{\bm {m}}^{-1/2}(z) \left[\sum_{j=0}^\infty \left(-\tfrac12 z \mathrm{M}_{\bm{m}}\right)^j\right]\mathrm{D}_{\bm {m}}^{-1/2}(z){\bf b}_m(z).
\]
We now define the finite-sum related vector:
\[
\check{\bm{\rho}}_m(z) = \frac{z}2
\mathrm{D}_{\bm {m}}^{-1/2}(z) \left[\sum_{j=0}^{(m-3)/2}\left(-\tfrac12 z \mathrm{M}_{\bm{m}}\right)^j\right]\mathrm{D}_{\bm {m}}^{-1/2}(z){\bf b}_m(z).
\]
Noting that
\[
{\bm{\rho}}_m(z)-\check{\bm{\rho}}_m (z)= \frac{z}2 \mathrm{D}_{\bm {m}}^{-1/2}(z) \left[\sum_{j=(m-1)/2}^\infty \left(-\tfrac12 z \mathrm{M}_{\bm{m}}\right)^j\right]\mathrm{D}_{\bm {m}}^{-1/2}(z){\bf b}_m(z)
\]
we obtain
\begin{eqnarray*}
 \| {\bm{\rho}}_m(z)-\check{\bm{\rho}}_m (z)\|_{\infty} &\leq&
 \frac{|z|}2 \|\mathrm{D}_{\bm {m}}^{-1/2}(z)\|_{\infty} \left[\sum_{j=(m-1)/2}^\infty \frac{|z|^{j}}{2} \|\mathrm{M}_{\bm{m}}\|_{\infty}^j\right] \|\mathrm{D}_{\bm {m}}^{-1/2}(z)\|_{\infty} \|{\bf b}_m(z)\|_{\infty}\\
 &\leq&
 \frac{1}2\left[\frac{|z|}{ m_0+2 }\right]^{ (m+1)/2} \left[1-\frac{|z|}{m_0+2}\right]^{-1} \frac{5}{|z|}\\
  &\leq & \frac{5}{2|z|} r^{-1}(1+r)^{-(m+1)/2}.
\end{eqnarray*}
where the following bounds have been used:
\[
\|\mathrm{D}_{\bm {m}}^{-1/2}(z)\|_{\infty}\le \frac{1}{\sqrt{m_0+2}},\quad \frac{|z|}{m_0+2}\leq \frac{1}{1+r}.
\]
Finally, since $ \mathrm{M}_{\bm{m}}(z) $ is tridiagonal, the bandwidth of $ \mathrm{M}_{\bm{m}}^j $ is just $ j+1 $ , which implies that all entries of
\[
 \mathrm{M}_{\bm{m}}^j(\widetilde{\bf b}_{\bm m}(z)-{\bf b}_{\bm m}(z))
\]
are zero except for the first and last $ j+1 $ entries. This ensures that
\[
 \check{\rho}_{m_{\rm c}}(z) = \widetilde{\rho}_{m_{\rm c}}(z).
\]
The result is then proven.
\end{proof}

\begin{remark}
Notice that in view of the previous result,  for $\varepsilon>0$ small, it suffices to take
\begin{equation}\label{eq:m}
 m= {2\left\lfloor\frac{\log(3/(\varepsilon  r))}{\log(1+r)} \right\rfloor+1}
\end{equation}
to ensure that
 \[
| \widetilde{\rho}_{m_{\rm c}}(z)- {\rho}_{m_{\rm c}}(z)|\leq \varepsilon.
\]
We can summarize these calculations in Algorithm \ref{alg:03}.
\end{remark}

\begin{algorithm}
\begin{algorithmic}[1]
    \Require $z$, $L$, $r>0$, $\varepsilon>0$.
    \State Set  $m_0= \max\{\lfloor (1+r)|z|\rfloor-2,L\}$.
    \State Choose an odd integer $m$ satisfying \eqref{eq:m}, and define $m_1 := m_0 + m$.
    \State Set $ \bm{m} := (m_0, m_1) $.
    \State Compute $ \widetilde{\bf b}_{\bm{m}}(z)$, $\mathrm{D}_{\bm{m}}$, and $\mathrm{M}_{\bm{m}}$ according to \eqref{eq:47}, \eqref{eq:Dn}, and \eqref{eq:Mn}.
    \State Set ${\bf c}_{\bm{m}} = \frac{z}{2} \mathrm{D}_{\bm{m}}^{-1/2} \widetilde {\bf b}_{\bm{m}}(z)$.
    \State Solve $(\mathrm{I}_m + \frac{z}2 \mathrm{M}_{\bm{m}}) {\bm y}_{\bm{m}} = {\bf c}_{\bm{m}}$.
 %\label{eq:tridiagonalsystem}.
    \State Compute $\widetilde{\bm{\rho}}_{\bm{m}}(z) =   \mathrm{D}_{\bm{m}}^{-1/2} {\bm y}_{\bm{m}}$.
    \State \Return  $n_1=(m_1+m_0-1)/2$, $ \widetilde{\rho}_{n_1+1}(z)$
\end{algorithmic}
\caption{\label{alg:03}Computation of $n_1$ and $\widetilde{\rho}_{n_1+1}(z)\approx \rho_{n_1+1}(z)$.}
\end{algorithm}

\subsection{Summary: Fast and stable computation of the weights for all $L$ and $z$}
We summarize the three algorithms introduced in Algorithm \ref{alg:04}:  Given inputs $L$ and $z$, the algorithm returns the vector of weights $\bm{\omega}_L(z)=(\omega_{0}(z), \ldots, \omega_{L}(z))^\top$  (see \eqref{eq:thequadformula}) as well as $\bm{\rho}_L(z)=(\rho_{0}(z), \ldots, \rho_{L}(z))^\top$ which may also be of interest.

\begin{algorithm}
\begin{algorithmic}[1]
    \Require $z$, $L$
    \If{$\Re z=0$}
    \State Set $n_0=\lceil |z|\rceil +1$
    \Else
    \State Set $n_0=2\lceil \ |z|^{1/2} \rceil +1$
    \EndIf
    \State Set $L_0=\min\{n_0,L\}$
    \State{Compute $(\rho_{0}(z), \ldots, \rho_{L_0}(z))$ and $(\omega_{0}(z), \ldots, \omega_{L_0}(z))$
    \Statex using Algorithm \ref{alg:01} with inputs $L_0$ and $z$.}
    \If{$L_0<L$}
    \State Compute $n_1$ and ${\rho}_{n_1+1}(z)$ using Algorithm \ref{alg:03} with inputs $L$, $z$, $r{>0}$, and $\varepsilon$.
    \State Compute $(\rho_{n_0+1}(z), \ldots, \rho_{L}(z))$ and $(\omega_{n_0+2}(z), \ldots, \omega_{L}(z))$
    \Statex\hspace{\algorithmicindent}using Algorithm \ref{alg:02} with $L$, $z$, $n_0$, and ${\rho}_{n_0+1}(z)$,   $n_1$ and ${\rho}_{n_1+1}(z)$ as inputs.
    \EndIf
    \State \Return ${\bm{\rho}_L(z):=}(\rho_{0}(z), \ldots, \rho_{L}(z))$ and ${\bm{\omega}_L(z):=}(\omega_{0}(z), \ldots, \omega_{L}(z))$.
\end{algorithmic}
\caption{\label{alg:04}Fast and stable computation of the weight vectors $\bm{\omega}_{L}(z)$ and $\bm{\rho}_{L}(z)$.}
\end{algorithm}

\section{Numerical experiments}

We illustrate the main results of this paper with selected numerical experiments. Hence, we first explore the stability of the evaluation of the weights $\{\bm{\omega}_L(z),\bm{\rho}_L(z)\}$ on which the efficient implementation of the quadrature method hinges.

Next, we test the convergence of the quadrature rule in several cases, for both smooth and non-smooth functions, in terms of the number of quadrature points and $z$, the (complex) phase in the exponential.

Finally, we introduce an application of these quadrature rules for solving fractional partial evolutionary problems.
%{All experiments were carried out using MATLAB on a Linux system equipped with an Intel Xeon E5-2630 v3 processor.}

\subsection{Implementation of the rule}

\subsubsection{Stability in the computation of the weights}

We have implemented the algorithm for evaluating the weights, which consists of Phase 1 and Phase 2 as outlined in Algorithms \ref{alg:01} and \ref{alg:02}, using MATLAB. Additionally, a quadruple-precision implementation was developed using MATLAB’s symbolic toolbox. While this version is more robust, it is significantly less efficient and will be used only for testing purposes. The code can be found in \cite{CleCurRules}.

Let us then denote
\[
 \bm{\rho}^{\rm unstab}_n(z), \
 \bm{\rho}^{\rm unstab,qp}_n(z),\
 \bm{\rho}_n(z),\
 \bm{\rho}^{\rm qp}_n(z),
\]
where $
 \bm{\rho}^{\rm unstab}_n(z)$ and $ \bm{\rho}^{\rm unstab,qp}_n(z)$ are the results of applying only Phase 1 of Algorithm \ref{fig:01} in double and quadruple precision (in other words, $n_0=\infty$ in Algorithm \ref{alg:04}). On the other hand, $\bm{\rho}_n(z)$ and $\bm{\rho}^{\rm qp}_n(z)$ are the results obtained from the stable algorithm (such as presented in Algorithm \ref{alg:04}).

 Figure \ref{fig:01} shows the modulus of $\bm{\rho}^{\rm unstab}_{256}(z)$ and $\bm{\rho}^{\rm unstab,qp}_{256}(z)$ for $z=-40\pi e^{{\rm i}\theta}$ with $\theta = 0, \pi/6, \pi/3$, and $\pi/2$.
We observe that instabilities clearly affect the computations, with the worst case occurring at $z = -40\pi \approx -125$, as conjectured. Indeed, for purely imaginary numbers, the recurrence relation in Algorithm 1 remains stable up to $n_0\approx |z|$, a fact that, as mentioned in Section 3, was proven analytically in \cite{DoGrSm:2010}. We conclude also that quadruple-precision arithmetic only mitigates the problem.

In Figure \ref{fig:02}, we compare $
 \bm{\rho}^{\rm unstab,qp}_{256}(z)$ and
 $\bm{\rho}_{256}(z)$. We point out that the switch between Algorithms 1 and 2 occurs in this case at $n_0=26$, but this does not affect the quality of the approximation provided by Algorithm \ref{alg:04}. The error that appears far beyond this switching point is attributed entirely to the instabilities in the first phase of the algorithm, which ultimately also affect $
 \bm{\rho}^{\rm unstab,qp}_n(z)$.

% \pgfplotsset{
%     every axis plot/.append style={thick},
%     legend style={font=\scriptsize},
%     tick label style={font=\small}, % Corrected tick label style
%     title style={font=\small},
%     scale=0.5,
%     yminorticks=true,
%     yticklabel pos=left, % Ensure only one yticklabel position is used
%     yticklabel style={anchor=east}, % Aligns y-tick labels
%     label style={font=\small}, % Makes axis labels smaller
%     legend pos=north west, % Uncommented to enable legend positioning
% }
 \begin{figure}
%\centering{\input{fig01.tex}}
\centering{\includegraphics[width=0.66\textwidth]{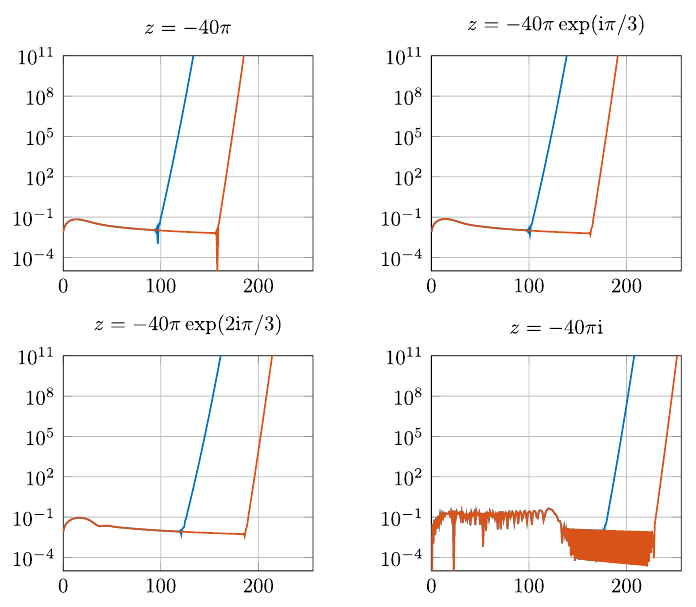}}

  \caption{\label{fig:01}  Modulus of the weights $ \bm{\rho}^{\rm unstab}_n(z),$ (blue) and
 $\bm{\rho}^{\rm unstab,qp}_n(z),$ (orange) for $n=0,\ldots,250$ and different values of $z$.  We observe that instabilities affect the computation, the worst scenario occurs when $z =-40\pi\approx 126$. Quadruple precision arithmetic only  mitigates the problem }

\centering{\includegraphics[width=0.66\textwidth]{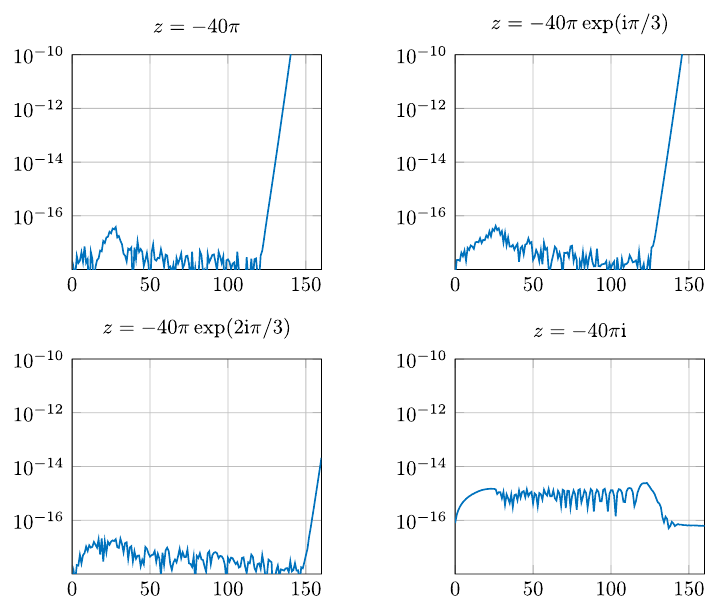}}
 %\centering{\input{fig04.tex}}
  \caption{\label{fig:02} Error $|\bm{\rho}_n(z)- \bm{\rho}^{\rm unstab,qp}_n(z)|$ for several values of $z$. We observe that $\bm{\rho}_n(z)$ is computed stably, even when Algorithm 02 is applied from $n_0=26$ onward. The error for larger values of $n$ in the three first cases can be attributed to the instabilities of Algorithm 01, which ultimately also affect the values computed in quadruple precision, $\bm{\rho}^{\rm unstab,qp}_n(z)$.}
\end{figure}
Finally, in Figure \ref{fig:03}, we display the modulus of the entries of vector $ \bm{\rho}_{256}(z)- \bm{\rho}^{\rm qp}_{256}(z)$. We point out that the computation of $\rho_{n_1}$ for some $n_1(z)$ was carried out using Algorithm 3 in both cases. However, for the quadruple-precision implementation, $n_1$ was chosen accordingly to improve the quality of the approximation, taking advantage of the extra precision in the calculations. The error remains within machine precision in double precision calculations.

\begin{figure}
%\centering{\input{fig03.tex}}%
\centering{\includegraphics[width=0.66\textwidth]{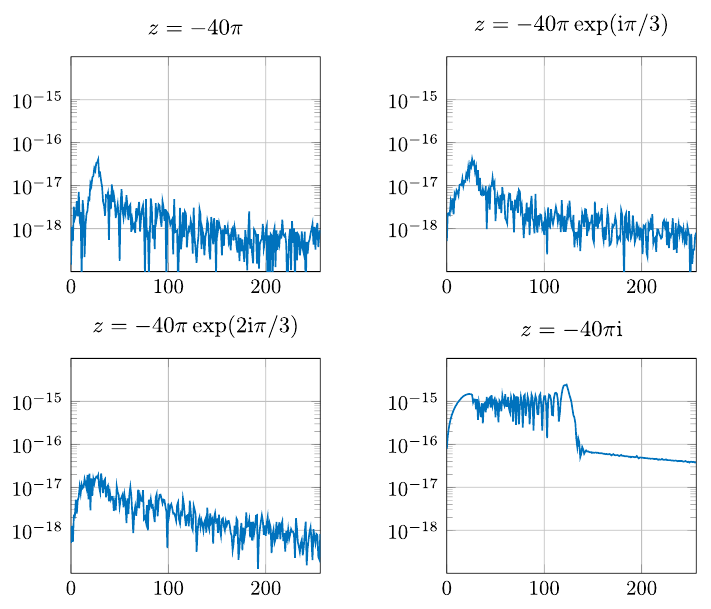}}
  \caption{\label{fig:03} Error between $|\bm{\rho}_n(z)-
 \bm{\rho}^{\rm qp}_n(z)|$ for several values of $z$. The calculations remains stable in all the range}
\end{figure}

\subsubsection{Error of the quadrature rule}

Consider the integral in this experiment:

\begin{equation}\label{eq:Inz}
I(n,z)= \int_{0}^{2} P_n(x-1) \exp(s z)\,{\rm d}s = \sqrt{\frac{2 \pi}{-{\rm i}z}} (-{\rm i})^n e^z J_{n+\frac{1}{2}}(-{\rm i}z), \quad z \in \mathbb{C} \setminus (-\infty,0]{\rm i}.
\end{equation}
Here, $P_n$ denotes the Lagrange polynomial of degree $n$.

This identity is given in equation (7.243) of \cite{GrRy:2007} (see also \cite{Bakhvalov1968}) for $z$ in the range
$[0,\infty){\rm i}$. The extension to other values of $z$ follows by analyticity.  We note that this formula has been previously considered in the literature for computing oscillatory integrals, particularly in the pioneering work of \cite{Bakhvalov1968}.

Computing such integrals allows us to test both the stability in the computation of the weights $\bm{\omega}_L(z)$ and the interpolation procedure behind the rule. We have applied our rule with $ L = n $. In exact arithmetic, this would yield the exact value.
As shown in Figure~\ref{fig:04}, the error remains on the order of the round-off unit.  However, the result is significantly worse when analyzing the relative errors, as illustrated in Figure~\ref{fig:05}. This discrepancy can be easily explained by the magnitude of the involved coefficients.

% \pgfplotsset{
%     every axis plot/.append style={thick},
%     legend style={font=\scriptsize},
%     tick label style={font=\small}, % Corrected tick label style
%     title style={font=\small},
%     scale=0.5,
%     scaled y ticks=false,
%     yminorticks=true,
%     yticklabel pos=left, % Ensure only one yticklabel position is used
%     yticklabel style={anchor=east}, % Aligns y-tick labels
%     label style={font=\small}, % Makes axis labels smaller
%     legend pos=north west, % Uncommented to enable legend positioning
% }
\begin{figure}
%\centering{ \input{figExp2_1.tex}}
\centering{\includegraphics[width=0.98\textwidth]{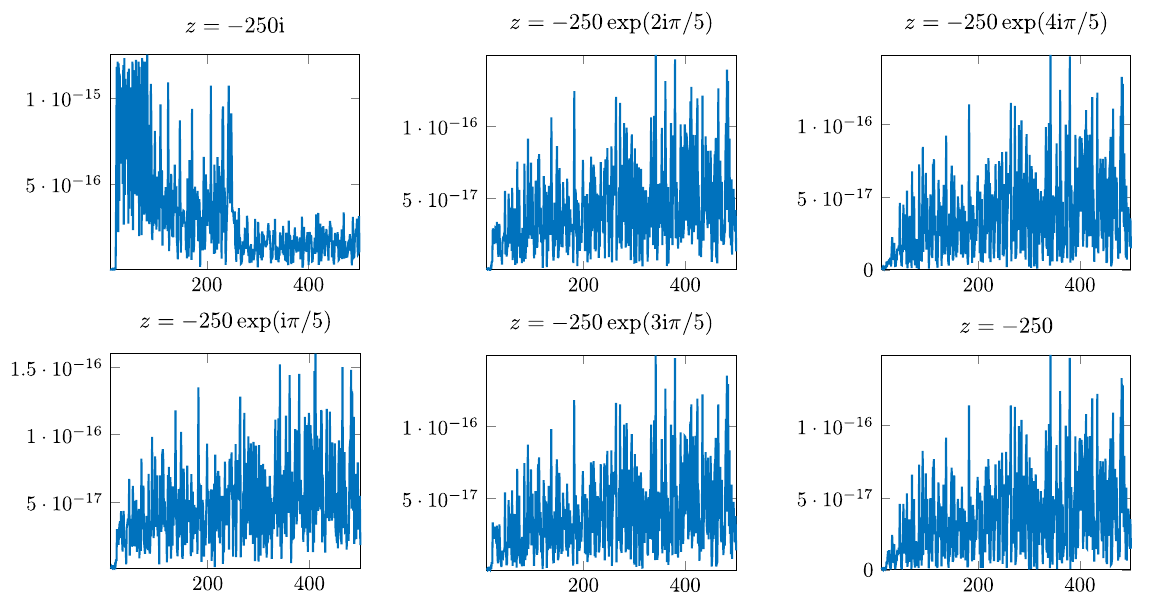}}
\caption{\label{fig:04}Absolute error in the computation of $I(n,z)$ cf. \eqref{eq:Inz} for $|z|=250$. Notice that the error in exact arithmetic should be zero }
\end{figure}

For instance, consider the following illustration:
\[
 I(128,-250)\approx 3.37\times 10^{-18}
\]
whereas
\[
\|\bm{\omega}_{129}(-250)\|_\infty = 4\times 10^{-3},
\]
which corresponds to a difference of 15 orders of magnitude.

\begin{figure}[h]
%\centering{ \input{figExp2_2.tex}}
\centering{\includegraphics[width=1\textwidth]{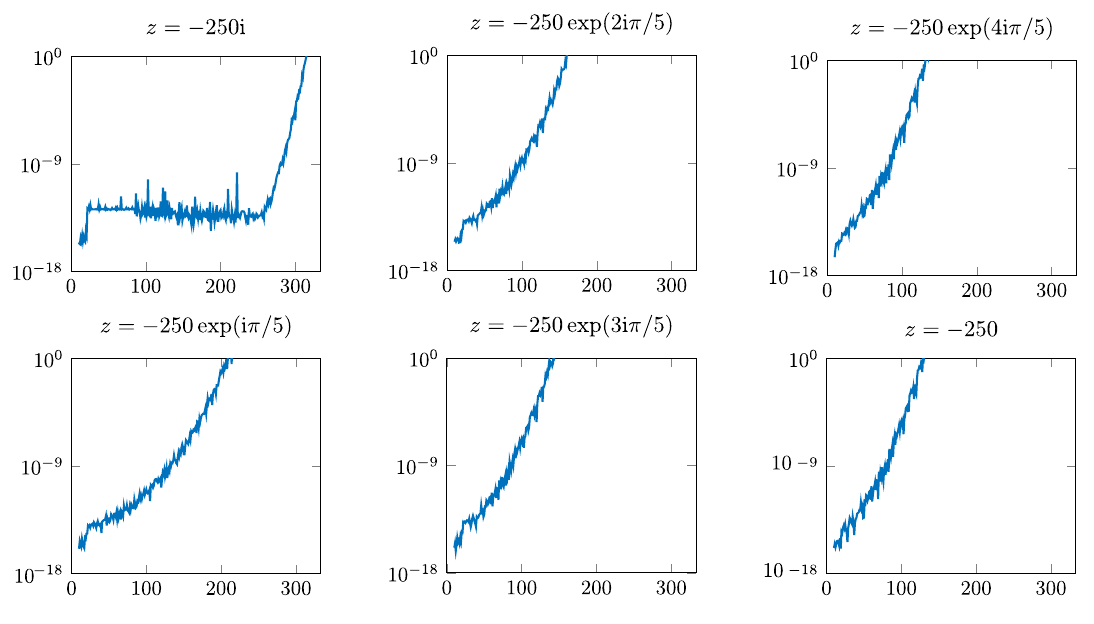}}
\caption{\label{fig:05}Relative error in the computation of  $I(n,z)$ cf. \eqref{eq:Inz}  for $|z|=250$. In this case the exact integral decay exponentially to zero which affects the computations due to cancellation error in the computation of the interpolating polynomial in the Chebyshev basis. }
\end{figure}
%
%
% \begin{figure}
%  \input{figExp2_3.tex}
% \caption{\label{fig:exp23} Even elements of the vector $\bm{\omega}_{128}(-250)$ used to compute $I_{128}(-250) \approx 3.37 \times 10^{-18}$. This calculation results in significant cancellation errors, which explain why the relative error of the quadrature rule is large, despite the relative error remaining within the round-off unit.}
% \end{figure}

Next we consider an academical example:
\begin{equation}\label{eq:LUX:byRosalia:03}
 J(z) = \int_{0}^2 \frac{\cos(5\pi s)}{4+\sin(4\pi s)}\exp({zs})\,{\rm d}s
\end{equation}
The error of the formula is expected to decay superalgebraically with respect $L$ in this case since the slow part of the integrand is smooth, actually analytic.

In Table \ref{tab:04}, we present the estimated errors for $ z = z_0\times 4^r $, with $ r=0,1,\ldots,5 $ and $ z_0=-20\exp(\pi \ell {\rm i}/6) $, with $ \ell =0,1,2,3 $. For reference, we have taken as the integral's exact value   the one computed using our quadrature rule with $L=1,280$ nodes.
We clearly observe superalgebraic convergence in $ L $ until machine precision is reached.
Additionally, when reading the tables row-wise, we observe that in the first rows, the error follows an approximate decay rate of $|z|^{-2}$, as predicted by Theorem \ref{theo:CleCur}.

\begin{table}[p]
\small
\[
\begin{array}{|r|c|c|c|c|c|c|}
\hline
L  \backslash r & 0 & 1 &2 &3 & 4 &5\\ \hline
 10   & 1.66{\rm e-}{04}  &   1.91{\rm e-}{04}  &   2.31{\rm e-}{05}  &   1.68{\rm e-}{06}  &   1.09{\rm e-}{07}  &   6.90{\rm e-}{09} \\
 20   & 1.88{\rm e-}{07}  &   1.39{\rm e-}{07}  &   1.76{\rm e-}{07}  &   2.12{\rm e-}{08}  &   1.54{\rm e-}{09}  &   1.00{\rm e-}{10} \\
 40   & 4.27{\rm e-}{08}  &   6.08{\rm e-}{08}  &   1.25{\rm e-}{08}  &   2.56{\rm e-}{08}  &   3.12{\rm e-}{09}  &   2.28{\rm e-}{10} \\
 80   & 2.97{\rm e-}{14}  &   3.17{\rm e-}{14}  &   4.60{\rm e-}{14}  &   7.39{\rm e-}{15}  &   1.85{\rm e-}{14}  &   2.25{\rm e-}{15} \\
160   & 2.60{\rm e-}{18}  &   4.34{\rm e-}{19}  &   6.51{\rm e-}{19}  &   1.08{\rm e-}{18}  &   2.03{\rm e-}{19}  &   4.34{\rm e-}{19} \\
320   & 1.73{\rm e-}{18}  &   0.00E+00  &   1.08{\rm e-}{19}  &   2.71{\rm e-}{20}  &   1.36{\rm e-}{20}  &   3.39{\rm e-}{21} \\
640   & 1.73{\rm e-}{18}  &   0.00E+00  &   1.08{\rm e-}{19}  &   2.71{\rm e-}{20}  &   0.00E+00  &   1.69{\rm e-}{21} \\
\hline \end{array}\]
%\caption{\label{tab:01}Estimated error of the quadrature rule {for integral \eqref{eq:LUX:byRosalia:02}}  with $L+1$ points  for $z =-20\times 4^r$}
\[
\begin{array}{|r|c|c|c|c|c|c|}
\hline
L  \backslash r & 0 & 1 &2 &3 & 4 &5\\ \hline
 10   & 6.73{\rm e-}{04}  &   2.21{\rm e-}{04}  &   2.38{\rm e-}{05}  &   1.70{\rm e-}{06}  &   1.10{\rm e-}{07}  &   6.90{\rm e-}{09} \\
 20   & 1.91{\rm e-}{06}  &   6.29{\rm e-}{07}  &   2.03{\rm e-}{07}  &   2.18{\rm e-}{08}  &   1.55{\rm e-}{09}  &   1.00{\rm e-}{10} \\
 40   & 4.18{\rm e-}{08}  &   5.07{\rm e-}{08}  &   8.20{\rm e-}{08}  &   2.94{\rm e-}{08}  &   3.21{\rm e-}{09}  &   2.30{\rm e-}{10} \\
 80   & 2.96{\rm e-}{14}  &   3.12{\rm e-}{14}  &   4.01{\rm e-}{14}  &   5.71{\rm e-}{14}  &   2.12{\rm e-}{14}  &   2.32{\rm e-}{15} \\
160   & 1.79{\rm e-}{18}  &   1.37{\rm e-}{18}  &   7.67{\rm e-}{19}  &   8.31{\rm e-}{19}  &   1.37{\rm e-}{18}  &   4.94{\rm e-}{19} \\
320   & 8.67{\rm e-}{19}  &   6.13{\rm e-}{19}  &   5.42{\rm e-}{20}  &   3.83{\rm e-}{20}  &   3.39{\rm e-}{21}  &   2.54{\rm e-}{21} \\
640   & 4.34{\rm e-}{19}  &   8.94{\rm e-}{19}  &   0.00E+00  &   2.71{\rm e-}{20}  &   6.78{\rm e-}{21}  &   1.89{\rm e-}{21} \\
\hline \end{array}\]
% \caption{\label{tab:02}Estimated error of the quadrature rule {for integral \eqref{eq:LUX:byRosalia:02}} with $L+1$ points  for $z =-20\times 4^r\exp(\pi{\rm i}/6)$}
 \[
\begin{array}{|r|c|c|c|c|c|c|}
\hline
L  \backslash r & 0 & 1 &2 &3 & 4 &5\\ \hline
 10   & 2.20{\rm e-}{03}  &   3.14{\rm e-}{04}  &   2.57{\rm e-}{05}  &   1.73{\rm e-}{06}  &   1.10{\rm e-}{07}  &   6.91{\rm e-}{09} \\
 20   & 8.32{\rm e-}{05}  &   2.57{\rm e-}{06}  &   2.89{\rm e-}{07}  &   2.36{\rm e-}{08}  &   1.58{\rm e-}{09}  &   1.01{\rm e-}{10} \\
 40   & 2.56{\rm e-}{08}  &   2.37{\rm e-}{07}  &   3.30{\rm e-}{07}  &   4.16{\rm e-}{08}  &   3.47{\rm e-}{09}  &   2.34{\rm e-}{10} \\
 80   & 1.95{\rm e-}{14}  &   3.01{\rm e-}{14}  &   1.18{\rm e-}{13}  &   2.25{\rm e-}{13}  &   3.00{\rm e-}{14}  &   2.51{\rm e-}{15} \\
160   & 3.13{\rm e-}{18}  &   6.13{\rm e-}{19}  &   6.79{\rm e-}{19}  &   4.52{\rm e-}{18}  &   5.52{\rm e-}{18}  &   7.00{\rm e-}{19} \\
320   & 3.47{\rm e-}{18}  &   8.94{\rm e-}{19}  &   3.64{\rm e-}{19}  &   3.03{\rm e-}{20}  &   7.58{\rm e-}{21}  &   4.24{\rm e-}{21} \\
640   & 2.45{\rm e-}{18}  &   8.94{\rm e-}{19}  &   2.24{\rm e-}{19}  &   4.89{\rm e-}{20}  &   6.78{\rm e-}{21}  &   4.24{\rm e-}{21} \\
\hline \end{array}\]
% \caption{\label{tab:03}Estimated error of the quadrature rule {for integral \eqref{eq:LUX:byRosalia:02}} with $L+1$ points  for $z =-20\times 4^r\exp(2\pi{\rm i}/6)$}
 \[
 \begin{array}{|r|c|c|c|c|c|c|}
\hline
L  \backslash r & 0 & 1 &2 &3 & 4 &5\\ \hline
 10   & 8.00{\rm e-}{03}  &   7.37{
m e-}{04}  &   6.43{\rm e-}{05}  &   8.33{\rm e-}{07}  &   1.84{\rm e-}{07}  &   1.61{\rm e-}{08} \\
 20   & 1.30{\rm e-}{02}  &   4.39{\rm e-}{05}  &   1.02{\rm e-}{06}  &   1.05{\rm e-}{07}  &   5.03{\rm e-}{09}  &   2.05{\rm e-}{10} \\
 40   & 2.77{\rm e-}{05}  &   1.18{\rm e-}{04}  &   2.68{\rm e-}{06}  &   8.20{\rm e-}{08}  &   7.01{\rm e-}{09}  &   5.24{\rm e-}{10} \\
 80   & 9.38{\rm e-}{11}  &   9.80{\rm e-}{07}  &   3.25{\rm e-}{09}  &   9.48{\rm e-}{11}  &   4.41{\rm e-}{12}  &   2.67{\rm e-}{13} \\
160   & 3.49{\rm e-}{17}  &   2.43{\rm e-}{17}  &   4.25{\rm e-}{14}  &   1.01{\rm e-}{15}  &   2.32{\rm e-}{17}  &   9.77{\rm e-}{19} \\
320   & 3.10{\rm e-}{17}  &   8.83{\rm e-}{17}  &   2.29{\rm e-}{17}  &   1.20{\rm e-}{17}  &   2.49{\rm e-}{18}  &   2.45{\rm e-}{19} \\
640   & 1.25{\rm e-}{17}  &   3.98{\rm e-}{17}  &   1.69{\rm e-}{17}  &   5.85{\rm e-}{18}  &   2.78{\rm e-}{18}  &   2.57{\rm e-}{19} \\
\hline \end{array}\]

  \caption{{ Estimated error of the quadrature rule for  \eqref{eq:LUX:byRosalia:03} using $L+1$ points.
The first table corresponds to $z = -40 \times 4^r$,
the second to $z = -40 \times 4^r \exp(\pi {\rm i}/6)$,
the third to $z = -40 \times 4^r \exp(\pi {\rm i}/3)$,
and the last one to $z = -40 \times 4^r {\rm i}$.}\label{tab:04}}
\end{table}

\begin{table}
\small
\[
\begin{array}{|r|c|c|c|c|c| }
\hline
L  \backslash r & 0 & 1 &2 &3 & 4  \\ \hline
        80 & 8.66{\rm e- }07 & 9.22{\rm e- }07 & 1.31{\rm e- }06 & 1.69{\rm e- }06 & 5.90{\rm e- }07 \\
       160 & 1.07{\rm e- }07 & 1.08{\rm e- }07 & 1.15{\rm e- }07 & 1.63{\rm e- }07 & 2.12{\rm e- }07 \\
       320 & 1.33{\rm e- }08 & 1.33{\rm e- }08 & 1.35{\rm e- }08 & 1.44{\rm e- }08 & 2.04{\rm e- }08 \\
       640 & 1.66{\rm e- }09 & 1.66{\rm e- }09 & 1.67{\rm e- }09 & 1.69{\rm e- }09 & 1.80{\rm e- }09 \\
      1280 & 2.07{\rm e- }10 & 2.07{\rm e- }10 & 2.07{\rm e- }10 & 2.08{\rm e- }10 & 2.11{\rm e- }10 \\
      2560 & 2.59{\rm e- }11 & 2.59{\rm e- }11 & 2.59{\rm e- }11 & 2.59{\rm e- }11 & 2.60{\rm e- }11 \\
      5120 & 3.24{\rm e- }12 & 3.24{\rm e- }12 & 3.24{\rm e- }12 & 3.24{\rm e- }12 & 3.24{\rm e- }12 \\
\hline \end{array}\]
%\caption{\label{tab:b:01}Estimated error of the quadrature rule for \eqref{eq:Ialpha}  with $\alpha=1/2$ with $L+1$ points  for $z =-40\times 4^r$}
\[
\begin{array}{|r|c|c|c|c|c| }
\hline
L  \backslash r & 0 & 1 &2 &3 & 4  \\ \hline
        80 & 8.65{\rm e- }07 & 9.15{\rm e- }07 & 1.28{\rm e- }06 & 1.80{\rm e- }06 & 6.04{\rm e- }07 \\
       160 & 1.07{\rm e- }07 & 1.08{\rm e- }07 & 1.14{\rm e- }07 & 1.60{\rm e- }07 & 2.25{\rm e- }07 \\
       320 & 1.33{\rm e- }08 & 1.33{\rm e- }08 & 1.35{\rm e- }08 & 1.43{\rm e- }08 & 2.00{\rm e- }08 \\
       640 & 1.66{\rm e- }09 & 1.66{\rm e- }09 & 1.67{\rm e- }09 & 1.69{\rm e- }09 & 1.79{\rm e- }09 \\
      1280 & 2.07{\rm e- }10 & 2.07{\rm e- }10 & 2.07{\rm e- }10 & 2.08{\rm e- }10 & 2.11{\rm e- }10 \\
      2560 & 2.59{\rm e- }11 & 2.59{\rm e- }11 & 2.59{\rm e- }11 & 2.59{\rm e- }11 & 2.60{\rm e- }11 \\
      5120 & 3.24{\rm e- }12 & 3.24{\rm e- }12 & 3.24{\rm e- }12 & 3.24{\rm e- }12 & 3.24{\rm e- }12 \\
\hline \end{array}\]
 %\caption{\label{tab:b:02}Estimated error of the quadrature rule for \eqref{eq:Ialpha}  with $\alpha=1/2$ with $L+1$ points   for $z =-40\times 4^r\exp(\pi{\rm i}/6)$}
 \[
\begin{array}{|r|c|c|c|c|c| }
\hline
L  \backslash r & 0 & 1 &2 &3 & 4  \\ \hline
        80 & 8.61{\rm e- }07 & 8.96{\rm e- }07 & 1.15{\rm e- }06 & 2.16{\rm e- }06 & 6.46{\rm e- }07 \\
       160 & 1.06{\rm e- }07 & 1.08{\rm e- }07 & 1.12{\rm e- }07 & 1.44{\rm e- }07 & 2.70{\rm e- }07 \\
       320 & 1.33{\rm e- }08 & 1.33{\rm e- }08 & 1.34{\rm e- }08 & 1.40{\rm e- }08 & 1.80{\rm e- }08 \\
       640 & 1.66{\rm e- }09 & 1.66{\rm e- }09 & 1.66{\rm e- }09 & 1.68{\rm e- }09 & 1.75{\rm e- }09 \\
      1280 & 2.07{\rm e- }10 & 2.07{\rm e- }10 & 2.07{\rm e- }10 & 2.08{\rm e- }10 & 2.10{\rm e- }10 \\
      2560 & 2.59{\rm e- }11 & 2.59{\rm e- }11 & 2.59{\rm e- }11 & 2.59{\rm e- }11 & 2.60{\rm e- }11 \\
      5120 & 3.24{\rm e- }12 & 3.24{\rm e- }12 & 3.24{\rm e- }12 & 3.24{\rm e- }12 & 3.24{\rm e- }12 \\
\hline \end{array}\]
 %\caption{\label{tab:b:03}Estimated error of the quadrature rule for \eqref{eq:Ialpha}  with $\alpha=1/2$ with $L+1$ points for $z =-40\times 4^r\exp(2\pi{\rm i}/6)$}
 \[
 \begin{array}{|r|c|c|c|c|c| }
\hline
L  \backslash r & 0 & 1 &2 &3 & 4 \\ \hline
         80 & 1.16{\rm e- }06 & 2.71{\rm e- }05 & 3.39{\rm e- }06 & 2.46{\rm e- }06 & 1.59{\rm e- }06 \\
        160 & 1.42{\rm e- }07 & 2.87{\rm e- }06 & 2.25{\rm e- }06 & 1.65{\rm e- }06 & 8.65{\rm e- }07 \\
        320 & 1.77{\rm e- }08 & 2.59{\rm e- }08 & 1.40{\rm e- }06 & 2.02{\rm e- }08 & 1.85{\rm e- }07 \\
        640 & 2.21{\rm e- }09 & 3.24{\rm e- }09 & 6.43{\rm e- }08 & 1.18{\rm e- }07 & 8.87{\rm e- }08 \\
       1280 & 2.76{\rm e- }10 & 4.04{\rm e- }10 & 2.63{\rm e- }10 & 2.73{\rm e- }08 & 1.72{\rm e- }08 \\
       2560 & 3.46{\rm e- }11 & 5.05{\rm e- }11 & 3.28{\rm e- }11 & 1.70{\rm e- }09 & 5.26{\rm e- }09 \\
       5120 & 4.32{\rm e- }12 & 6.32{\rm e- }12 & 4.10{\rm e- }12 & 5.97{\rm e- }12 & 4.79{\rm e- }10 \\
\hline \end{array}\]
\caption{Estimated error of the quadrature rule for \eqref{eq:Ialpha} with $\alpha=1/2$ using $L+1$ points.
The first table corresponds to $z =-40\times 4^r{\rm i}$,
the second to $z =-40\times 4^r \exp(\pi {\rm i}/6)$,
the third to $z =-40\times 4^r \exp(\pi {\rm i}/3)$,
and the last one to $z =-40\times 4^r {\rm i}$.}
\label{tab:b:04}
\end{table}

Next, we analyze how the convergence of the quadrature rule is affected by functions with singularities. As a first example, we consider the integral (cf. \cite[Eq. 3.387]{GrRy:2007}):
\begin{equation}\label{eq:Ialpha}
 \int_0^2 (2(2-s))^\alpha \exp(z s)\,{\rm d}s = (2\alpha)!! \pi \exp(z) z ^{-1/2-\alpha}I_{\alpha+1/2}(z),
\end{equation}
for $\alpha = -1/2, 1/2, 3/2, \dots$. In our {analysis}, we focus on the cases $\alpha = 1/2$ and $\alpha = 3/2$.

We anticipated that the convergence of the method, both in terms of $L$ and the magnitude of $z$, will exhibit some deterioration because the limited regularity of the integrand at the end points of the integration interval. However, we expect this deterioration in $L$ to be less severe than predicted by theory (Theorem \ref{theo:CleCur}), since singularities located at the endpoints of the integration interval tend to be less problematic than those in the interior when Chebyshev polynomial techniques are used.
\added[id=R2]{To gain insight into this phenomenon, we note that $f_{\alpha}(s)=\left(2(2-s)\right)^\alpha$, for positive non-integer values of $\alpha$, belongs to $H^{\alpha+1/2-\varepsilon}(0,2)$ for any $\varepsilon>0$. However, $(f_{\alpha})_{\#}(\theta)=f_\alpha(1+\cos(\theta))$ belongs to $H_{\#}^{2\alpha+1/2-\varepsilon}$.}
Indeed, in the results we obtained, which are collected in Tables \ref{tab:b:04} (for $\alpha=1/2$) and \ref{tab:c:04} (for $\alpha=3/2$), the estimated order of convergence in $L$ appears to be $3$ for $\alpha=1/2$ and $5$ for $\alpha=3/2$
\added[id=R2]{which should be compared with the theoretical orders (almost) $3/2-\varepsilon$ and $7/2-\varepsilon$ predicted by our analysis (cf.\ Theorem~\ref{theo:CleCur}), indicating that the theoretical bounds are not sharp in these cases.}
However, we do not observe a decay in $z$, which can be explained by the fact that the negative powers of $z$ obtained via integration by parts cannot be freely applied here due to the singularities of the integrand at the endpoints.
\added[id=R2]{We can therefore conclude that the regularity assumptions made in Theorem~\ref{theo:CleCur} in order to obtain negative powers of $|z|$ in the convergence estimates are essentially sharp, or at least significantly sharper than those required to describe the convergence with respect to $L$.}
Observe also that this collection of experiments demonstrates the effectiveness of our algorithm in efficiently handling a large number of quadrature points.

\begin{table}[H]
\small
\[
\begin{array}{|r|c|c|c|c|c| }
\hline
L  \backslash r & 0 & 1 &2 &3 & 4  \\ \hline
        80 & 1.41{\rm e- }10 & 1.27{\rm e- }10 & 5.51{\rm e- }11 & 4.35{\rm e- }10 & 1.22{\rm e- }10 \\
       160 & 4.50{\rm e- }12 & 4.41{\rm e- }12 & 3.97{\rm e- }12 & 1.71{\rm e- }12 & 1.36{\rm e- }11 \\
       320 & 1.41{\rm e- }13 & 1.41{\rm e- }13 & 1.38{\rm e- }13 & 1.24{\rm e- }13 & 5.33{\rm e- }14 \\
       640 & 4.42{\rm e- }15 & 4.41{\rm e- }15 & 4.39{\rm e- }15 & 4.31{\rm e- }15 & 3.88{\rm e- }15 \\
      1280 & 1.37{\rm e- }16 & 1.38{\rm e- }16 & 1.38{\rm e- }16 & 1.37{\rm e- }16 & 1.35{\rm e- }16 \\
      2560 & 3.09{\rm e- }18 & 4.37{\rm e- }18 & 4.36{\rm e- }18 & 4.32{\rm e- }18 & 4.28{\rm e- }18 \\
      5120 & 1.30{\rm e- }18 & 2.17{\rm e- }19 & 1.44{\rm e- }19 & 1.38{\rm e- }19 & 1.33{\rm e- }19 \\
\hline \end{array}\]
%\caption{\label{tab:b:01}Estimated error of the quadrature rule for \eqref{eq:Ialpha}  with $\alpha=1/2$ with $L+1$ points  for $z =-40\times 4^r$}
\[
\begin{array}{|r|c|c|c|c|c| }
\hline
L  \backslash r & 0 & 1 &2 &3 & 4  \\ \hline
        80 & 1.42{\rm e- }10 & 1.32{\rm e- }10 & 2.18{\rm e- }10 & 5.17{\rm e- }10 & 1.29{\rm e- }10 \\
       160 & 4.50{\rm e- }12 & 4.43{\rm e- }12 & 4.12{\rm e- }12 & 6.82{\rm e- }12 & 1.62{\rm e- }11 \\
       320 & 1.41{\rm e- }13 & 1.41{\rm e- }13 & 1.38{\rm e- }13 & 1.29{\rm e- }13 & 2.13{\rm e- }13 \\
       640 & 4.42{\rm e- }15 & 4.41{\rm e- }15 & 4.40{\rm e- }15 & 4.33{\rm e- }15 & 4.03{\rm e- }15 \\
      1280 & 1.37{\rm e- }16 & 1.38{\rm e- }16 & 1.38{\rm e- }16 & 1.37{\rm e- }16 & 1.35{\rm e- }16 \\
      2560 & 3.58{\rm e- }18 & 4.15{\rm e- }18 & 4.37{\rm e- }18 & 4.31{\rm e- }18 & 4.29{\rm e- }18 \\
      5120 & 8.68{\rm e- }19 & 3.83{\rm e- }20 & 2.04{\rm e- }19 & 1.42{\rm e- }19 & 1.38{\rm e- }19 \\
\hline \end{array}\]
 %\caption{\label{tab:b:02}Estimated error of the quadrature rule for \eqref{eq:Ialpha}  with $\alpha=1/2$ with $L+1$ points   for $z =-40\times 4^r\exp(\pi{\rm i}/6)$}
 \[
\begin{array}{|r|c|c|c|c|c| }
\hline
L  \backslash r & 0 & 1 &2 &3 & 4  \\ \hline
        80 & 1.43{\rm e- }10 & 1.42{\rm e- }10 & 8.83{\rm e- }10 & 8.36{\rm e- }10 & 1.52{\rm e- }10 \\
       160 & 4.51{\rm e- }12 & 4.48{\rm e- }12 & 4.44{\rm e- }12 & 2.77{\rm e- }11 & 2.61{\rm e- }11 \\
       320 & 1.41{\rm e- }13 & 1.41{\rm e- }13 & 1.40{\rm e- }13 & 1.39{\rm e- }13 & 8.67{\rm e- }13 \\
       640 & 4.42{\rm e- }15 & 4.42{\rm e- }15 & 4.41{\rm e- }15 & 4.37{\rm e- }15 & 4.33{\rm e- }15 \\
      1280 & 1.38{\rm e- }16 & 1.38{\rm e- }16 & 1.38{\rm e- }16 & 1.38{\rm e- }16 & 1.37{\rm e- }16 \\
      2560 & 4.45{\rm e- }18 & 4.26{\rm e- }18 & 4.34{\rm e- }18 & 4.30{\rm e- }18 & 4.30{\rm e- }18 \\
      5120 & 8.13{\rm e- }19 & 2.61{\rm e- }19 & 1.75{\rm e- }19 & 1.26{\rm e- }19 & 1.39{\rm e- }19 \\
\hline \end{array}\]
 %\caption{\label{tab:b:03}Estimated error of the quadrature rule for \eqref{eq:Ialpha}  with $\alpha=1/2$ with $L+1$ points for $z =-40\times 4^r\exp(2\pi{\rm i}/6)$}
 \[
 \begin{array}{|r|c|c|c|c|c| }
\hline
L  \backslash r & 0 & 1 &2 &3 & 4 \\ \hline
        80 & 1.88{\rm e- }10 & 1.92{\rm e- }08 & 3.73{\rm e- }09 & 2.85{\rm e- }10 & 2.82{\rm e- }10 \\
       160 & 6.00{\rm e- }12 & 5.05{\rm e- }10 & 4.62{\rm e- }10 & 3.35{\rm e- }10 & 9.27{\rm e- }11 \\
       320 & 1.88{\rm e- }13 & 2.76{\rm e- }13 & 6.83{\rm e- }11 & 4.66{\rm e- }13 & 7.49{\rm e- }12 \\
       640 & 5.89{\rm e- }15 & 8.62{\rm e- }15 & 8.06{\rm e- }13 & 1.46{\rm e- }12 & 1.06{\rm e- }12 \\
      1280 & 1.69{\rm e- }16 & 2.76{\rm e- }16 & 1.93{\rm e- }16 & 8.37{\rm e- }14 & 5.17{\rm e- }14 \\
      2560 & 1.52{\rm e- }17 & 1.60{\rm e- }17 & 1.23{\rm e- }17 & 1.20{\rm e- }15 & 3.96{\rm e- }15 \\
      5120 & 1.80{\rm e- }17 & 1.02{\rm e- }17 & 5.40{\rm e- }18 & 3.39{\rm e- }18 & 8.92{\rm e- }17 \\
\hline \end{array}\]
\caption{Estimated error of the quadrature rule for \eqref{eq:Ialpha} with $\alpha=3/2$ using $L+1$ points.
The first table corresponds to $z =-40\times 4^r{\rm i}$,
the second to $z =-40\times 4^r \exp(\pi {\rm i}/6)$,
the third to $z =-40\times 4^r \exp(\pi {\rm i}/3)$,
and the last one to $z =-40\times 4^r {\rm i}$.}
\label{tab:c:04}
\end{table}

 \subsection{Applications to the solutions of fractional order evolution equations}

 In this section, we present an application of the product Clenshaw-Curtis rule analyzed in this paper to the solution of fractional-time evolution partial differential equations via Laplace transform techniques. These techniques have been widely applied in various contexts. For an introduction to this topic, as well as the foundation of the approach taken in this section, we refer the reader to \cite{MR2068831,MR2207270,MR2607556} and references therein.

As a model problem, we consider the following: let $\Omega$ be a polygonal domain in $\mathbb{R}^d$, and consider for $t\in[0,T]$ the following evolutionary problem:
\begin{equation}
 \label{eq:PDEequation}
 \partial_t u -\partial_t^{-\alpha}\Delta u=f(t),\quad u(0)=u_0
\end{equation}
Here, $\Delta: D(\Delta)\subset L^2(\Omega) \to L^2(\Omega)$ is the Laplace operator with homogeneous Dirichlet boundary conditions on $\partial \Omega$. Clearly, it is an unbounded operator with a well-defined compact inverse $\Delta^{-1}:L^2(\Omega)\to H^{1}_0(\Omega) \subset L^2(\Omega)$.

The fractional time derivative $\partial_t^{-\alpha}$ with
$\alpha\in(-1,1)$ is the {Riemann--Liouville} fractional operator,
defined as follows: for $\alpha=0$ we simply set $\partial_t^0u=u$,
while for $\alpha\neq0$ we define
\[
(\partial_t^{-\alpha}u)(t)=
\begin{cases}
 \displaystyle\partial_t\!\int_0^t \frac{(t-s)^\alpha}{\Gamma(1+\alpha)}u(s)\,{\rm d}s,
 & \alpha\in(-1,0),\\[2ex]
 \displaystyle\int_0^t \frac{(t-s)^{\alpha-1}}{\Gamma(\alpha)}u(s)\,{\rm d}s,
 & \alpha\in(0,1).
\end{cases}
\]
{Unlike the Caputo fractional derivative, the
Riemann--Liouville fractional derivative  satisfies the identity
$\widehat{\partial_t^{-\alpha}u}(z)=z^{-\alpha}\,\widehat{u}(z)$
for all $\alpha\in(-1,1)$, where $\widehat{u}$ denotes here, and in what follows, the Laplace
transform of $u$.}
Note also that for $\alpha=0$ we recover the classical heat equation.

Under these assumptions, we can apply the Laplace Transform and   write
\[
 \widehat{u}(z)= \widehat{\cal E}(z) (u_0+\widehat{f}(z))
\]
where
\[
\widehat{g}(z) := \int_0^\infty g(t)\exp(-z t)\,{\rm d}t,\qquad
 \widehat{{\cal E}}(z):=z^\alpha (z^{1+\alpha}\mathrm{I}-\Delta)^{-1}:L^2(\Omega)\to H_0^1(\Omega)\subset L^2(\Omega).
\]
Denote the subset in the complex plane
\[
\Sigma_{\varepsilon,\beta}:= \{w\in\mathbb{C} : |\mathop{\rm Arg} z|>\beta \pi,\ |z|>\varepsilon \}
\]
and notice that for any $\beta<\max\{(1+\alpha)^{-1},1\}$ and $\varepsilon$ sufficiently small  there exists $C_{\alpha,\varepsilon}$ such that
\begin{equation}\label{eq:Laplace:bound}
 \|{\cal E}(z)\|_{L^2(\Omega)  \to L^2(\Omega)}\leq \frac{C_\alpha}{1+|z|}, \quad \forall z\in \Sigma_{\varepsilon,\beta}.
\end{equation}
We can then invoke Bromwich integral first and deform the contour $\Re \:z=\sigma_0>0$ so that we obtain the representation formula
\begin{equation}\label{eq:LaplaceRepresentationFormula}
 u(t)=\frac{1}{2\pi{\rm i}}
 \int_{\Gamma} \exp(zt)\widehat{\cal E}(z) (u_0+\widehat{f}(z))\,{\rm d}z,\quad \forall t>0
\end{equation}
where $\Gamma\subset \Sigma_{\beta}\cup\{z\in\mathbb{C}\ : \ |z|>\varepsilon\}$  homotopic with the curve $\Re \:z=z_0>0$ oriented in the direction of the increasing imaginary part. Several choices for $\Gamma$ have been proposed in the literature. We follow the approach suggested by
L{\'o}pez-Fern{\'a}ndez and  Palencia in \cite{MR2091405}, the hyperbola parameterized:
\begin{equation}
 \begin{aligned}
 z(\xi)&:=\lambda(1-\sin(\delta-{\rm i}\xi))
 =\lambda\big[(1-\sin\delta\cosh\xi)+{\rm i}\cos\delta \sinh\xi\big].
 \\
 &=\lambda\big(1-\sin\delta \cosh\xi +{\rm i}\cos \delta \sinh \xi)
 , \quad \xi\in\mathbb{R},
 \end{aligned}
\end{equation}
where $\lambda$, $\delta$ are parameters taken to make \eqref{eq:Laplace:bound} hold for $z\in\Lambda$.

However,
the main drawback when following this approach is that the Laplace transform of $f$ is needed which very
often is a very restrictive assumption. However, the use of $\widehat{f}(z)$ can be circumvented proceeding in a different way: define
 \[
  g(z,t)= \exp(zt)u_0+\int_0^t \exp(z(t-s))f(s)\,{\rm d}s.
 \]
 so that
\begin{eqnarray}
 \label{eq:laplace:02}
u(t)&=& \frac{1}{2\pi{\rm i}}\int_\Gamma    \widehat{\cal E}(z) \widehat{g}(z,t) {\rm d}z, \quad t\in[0,T].
\end{eqnarray}

For  suitable $\delta\in(0,\pi/2)$ and $\lambda>0$,   $\Gamma$ is the left branch of hyperbola in the complex
plane which cuts the real axis at $\lambda(1-\sin\delta)$
with asymptotes $y = \pm (x-\lambda)\cot \delta $.

For simplicity, we denote
\[
 z_j = z(jk), \quad z_{j}' = z'(jk), \quad k=k(T)>0
\]
and define
\begin{eqnarray}\label{eq:UN}
 U_N(t)\!&:=&\!  \frac{k}{2\pi{\rm i}  }\sum_{j=-N}^N
 \widehat{\cal E}(z_j)g(z_j,t)z_j'\approx \frac{1}{2\pi}\int_{-\infty}^\infty  {\cal E}(z(\xi))\widehat{g}(z(\xi),t)z'(\xi)\,{\rm d}\xi \label{eq:defUn}
\end{eqnarray}
as an approximation of the true solution $u$. We point out that the contour integral in  \eqref{eq:laplace:02} is being approximated with the rectangular rule over the interval $[-kN ,kN]$. Since the parameterized integral decays exponentially to zero, this ensures rapid convergence of the rule, as proven in \cite{MR2091405}.  Observe also that, from an implementation point of view,  the terms in \eqref{eq:defUn} can be computed independently, which makes the method naturally parallelizable.

On the other hand, we still need to evaluate
\[
  g(z,t)
  = \exp(zt)u_0+\frac{t}{2} \int_0^2 \exp\left(\frac{z t}{2} s\right) \tilde{f}(s)\,{\rm d}s,\quad
\tilde{f}_t(s) := f(t(1-s/2)).
\]
It is precisely in this integral that we apply our quadrature rule, yielding the full discrete scheme
\begin{eqnarray}\label{eq:UN:2}
 U_{N,L,h} (t)\!&:=&\!  \frac{k}{2\pi{\rm i}  }\sum_{j=-N}^N
 \widehat{\cal E}_h(z_j)g_L(z_j,t)z_j', \quad g_L(z,t):= \exp(zt)u_0+\frac{t}{2} I_{L, (t/2)z}(\tilde{f}_t).
\end{eqnarray}
{Notice that $U_{N,L,h}$ relies, among other aspects, on the quadrature rule with complex exponential parameters $(t/2)z_j$ (cf.\ $I_{L,(t/2)z}(\tilde{f}_t)$ in the definition of $g_L(z,t)$). This involves working with a wide range of exponential parameter values that must be taken into account in the computations.}
This issue becomes even more pronounced when multiple snapshots of the solution are required, i.e., when computing the solution for different values of $t$.

The final component of this full discretization, and for which the suffix $h$ stands for, involves replacing $\widehat{\cal E}(z)$ with $\widehat{\cal E}_h(z)$, a finite element solver for the equation so that
\[
v_h=\widehat{\cal E}_h(z)f
\]
is the solution of
\[
 \left\{
 \begin{array}{ll}
 v_h\in P_h \cap H_0^{1}(\Omega)\\
 \displaystyle z^{1+\alpha}\int_\Omega {v_h} w_h +\int_{\Omega }\nabla v_h\nabla w_h = z^{-\alpha} \int_{\Omega}fw_h, & \forall w_h\in P_h \cap H_0^{1}(\Omega) \\
 \end{array}
 \right.
\]
with $P_h$ a classical finite element space of continuous piecewise polynomials.

We do not develop a detailed study of the convergence of the fully numerical method. Instead, we focus on implementing the method and demonstrating its convergence.
In the numerical experiment we consider we have taken $\alpha=1/2$. The domain, and subdomains, and the initial quadratic mesh for our FEM solver (consisting of 552 elements and 1,165 nodes) as well as initial condition $u_0$ and $f(x,y,z)$
\begin{equation}\label{eq:fpe:data}
 u_0(x,y) = \begin{cases}
               1, &(x,y)\in\Omega_1\cup\Omega_2\cup\Omega_3\\
              0 , &(x,y)\ \text{otherwise},
              \end{cases}\qquad
f(x,y,t)= \begin{cases}
              \sin(t) &(x,y)\in\Omega_1\\
              3.5\cos(3t) &(x,y)\in\Omega_2\\
              1.25\cos(2t) &(x,y)\in\Omega_3\\
              0 , &(x,y)\ \text{otherwise},
              \end{cases}
% @(x,y,dom) x*0.*(dom==1)+(1+x*0).*(dom==25)+(1+x*0).*(dom==35)+(1+x*0).*(dom==45);
%@(t,x,y,dom)  x*0.*(dom==1)+(sin(t)+x*0).*(dom==25)+... (3.5*cos(3*t)+x*0).*(dom==35)+(1.25*sin(t*2)+x*0).*(dom==45);
\end{equation}
 on different subdomains are shown in Figure \ref{fig:FEMmesh}. Quadratic Lagrange finite elements were used in our experiment.  The initial mesh was obtained with GMSH package \cite{gmsh}. Finer meshes were constructed using successively uniform (i.e. RGB refinements) of the initial grid.
\begin{figure}[h]
 \[
   \includegraphics[width=0.5\textwidth]{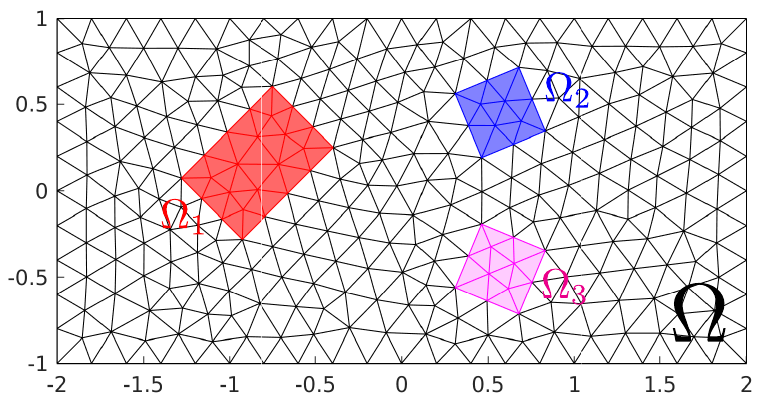}
 \]
\caption{\label{fig:FEMmesh}Domain of the problem}
\end{figure}
For $N \in \mathbb{N}$ we have taken
\[
 k = c_k \sqrt{N}, \quad c_k := \sqrt{\frac{4\pi r}{\gamma}} \approx 0.924,
\]
where, recall that $\alpha =0.5$,
\[
 r = \beta - 0.9(\delta + \pi/2),\quad  \beta = \frac{\pi}{4} + \frac{\pi}{2(1+\alpha)},\quad \delta = -\frac{\pi}{4} + \frac{\pi}{2(1+\alpha)},\quad \gamma = 1.8(1+\alpha).
\]
To tune the hyperbola used in our computations, we have set
\[
 \lambda = \frac{2\gamma}{\kappa T} \approx 5.86T, \quad \kappa = 1 - \sin(\delta - r) \approx 0.922.
\]
These values correspond to a parameter setting suggested in \cite{MR2607556},  to improve convergence in terms of $N$ and the order of the fractional derivative in the equation, a topic that we do not discuss further in this paper.

 Table \ref{tab:fpe} shows the estimated $L^2$ error for several time steps $N$ and different numbers of nodes $L$ in the quadrature rule. The spatial discretization used a mesh consisting of 35,328 triangles and 71,137 nodes. To analyze convergence in all the discrete parameters involved—namely $L$ and $N$—as well as the spatial  finite element discretization, we used as the {\em exact} solution the one obtained from the next refined mesh, which consists of 283,585 nodes and 141,312 triangles, with $N = 108$ and $L = 128$.

Although a detailed study of this equation is beyond the scope of this paper, several observations can be made at this first glance. First, for small values of $t$, even very low values of $L$ are sufficient to obtain highly accurate solutions. This can be attributed to the fact that the integral in $g_L(z_j,t)$ (see \eqref{eq:UN:2}) contributes negligibly or remains very small. However, as $t$ increases, the accuracy of the integral approximation becomes more critical. Nevertheless, even in the worst-case considered scenario, values as low as $L = 32$ yield sufficiently precise integral computations.

To gain insight into the values of the parameter $z$ involved in computing $g(z,t)$, we note that for $N = 52$ and the more demanding scenario $t = 4\pi$ in \eqref{eq:UN:2}, we have $(t/2)z_{\pm N} \approx -330 \pm 1250\mathrm{i}$ and $(t/2)z_0 \approx 2.17$. It is important to emphasize that the 105 integrals used in $U_{N,L,h}$ (cf. \eqref{eq:UN:2}), spanning such a wide range of values for $z$, are computed using our product Clenshaw-Curtis quadrature rules while maintaining the same configuration with $L = 32$ {\em nodes}, where $\tilde{f}_t$ is evaluated. Finally, we point out that the last case, i.e., $(t/2)z_0 \approx 2.17$, could be considered somewhat beyond the scope of the investigation in this paper, since its real part is positive although of moderate size. Nevertheless, the method still demonstrates resilience. We emphasize that 29 of these integrals correspond to $(t/2)z_j$ values with a positive real part

\begin{table}
 \[
\begin{array}{|crc|crc|crc|crc|}
     \hline
 \multicolumn{3}{|c|}{t=\pi/16} & \multicolumn{3}{|c|}{t=\pi/4} & \multicolumn{3}{|c|}{t=\pi} & \multicolumn{3}{|c|}{t=4\pi} \\
 \hline
 N & L & \text{Error} & N & L & \text{Error} & N & L & \text{Error} & N & L & \text{Error} \\
 \hline
 16 &  4 & 7.050{\rm e- }03 & 16 &  4  & 1.178{\rm e- }03 & 16 &  4 & 3.816{\rm e- }02  & 16 &  4 & 4.341{\rm e- }01 \\
    &  8 & 7.050{\rm e- }03 &    &  8  & 1.181{\rm e- }03 &    &  8 & 1.243{\rm e- }03  &    &  8 & 2.738{\rm e- }01 \\
    & 16 & 7.050{\rm e- }03 &    & 16  & 1.181{\rm e- }03 &    & 16 & 1.249{\rm e- }03  &    & 16 & 5.623{\rm e- }03 \\
    & 32 & 7.050{\rm e- }03 &    & 32  & 1.181{\rm e- }03 &    & 32 & 1.249{\rm e- }03  &    & 32 & 8.629{\rm e- }03 \\
    & 64 & 7.050{\rm e- }03 &    & 64  & 1.181{\rm e- }03 &    & 64 & 1.249{\rm e- }03  &    & 64 & 8.629{\rm e- }03 \\
 \hline
 24 &  4 & 7.684{\rm e- }04 & 24 &  4  & 1.575{\rm e- }04 & 24 &  4 & 3.795{\rm e- }02  & 24 &  4 & 4.403{\rm e- }01 \\
    &  8 & 7.684{\rm e- }04 &    &  8  & 1.658{\rm e- }04 &    &  8 & 2.133{\rm e- }04  &    &  8 & 2.791{\rm e- }01 \\
    & 16 & 7.684{\rm e- }04 &    & 16  & 1.658{\rm e- }04 &    & 16 & 1.811{\rm e- }04  &    & 16 & 3.565{\rm e- }03 \\
    & 32 & 7.684{\rm e- }04 &    & 32  & 1.658{\rm e- }04 &    & 32 & 1.811{\rm e- }04  &    & 32 & 1.126{\rm e- }03 \\
    & 64 & 7.684{\rm e- }04 &    & 64  & 1.658{\rm e- }04 &    & 64 & 1.811{\rm e- }04  &    & 64 & 1.126{\rm e- }03 \\
 \hline
 36 & 4  & 3.064{\rm e- }05 & 36 &  4  & 1.960{\rm e- }05 & 36 &  4 & 3.791{\rm e- }02  & 36 &  4 & 4.396{\rm e- }01 \\
    & 8  & 3.062{\rm e- }05 &    &  8  & 2.198{\rm e- }05 &    &  8 & 8.193{\rm e- }05  &    &  8 & 2.785{\rm e- }01 \\
    & 16 & 3.062{\rm e- }05 &    & 16  & 2.198{\rm e- }05 &    & 16 & 2.511{\rm e- }05  &    & 16 & 3.247{\rm e- }03 \\
    & 32 & 3.062{\rm e- }05 &    & 32  & 2.198{\rm e- }05 &    & 32 & 2.511{\rm e- }05  &    & 32 & 1.628{\rm e- }04 \\
    & 64 & 3.062{\rm e- }05 &    & 64  & 2.198{\rm e- }05 &    & 64 & 2.511{\rm e- }05  &    & 64 & 1.628{\rm e- }04 \\
 \hline
 54 & 4  & 5.528{\rm e- }07 & 54 &  4  & 1.679{\rm e- }05 & 54 &  4 & 3.790{\rm e- }02  & 54 &  4 & 4.395{\rm e- }01 \\
    & 8  & 5.581{\rm e- }07 &    &  8  & 3.130{\rm e- }07 &    &  8 & 7.193{\rm e- }05  &    &  8 & 2.784{\rm e- }01 \\
    & 16 & 5.581{\rm e- }07 &    & 16  & 3.130{\rm e- }07 &    & 16 & 2.889{\rm e- }07  &    & 16 & 3.163{\rm e- }03 \\
    & 32 & 5.581{\rm e- }07 &    & 32  & 3.130{\rm e- }07 &    & 32 & 2.889{\rm e- }07  &    & 32 & 9.813{\rm e- }06 \\
    & 64 & 5.581{\rm e- }07 &    & 64  & 3.130{\rm e- }07 &    & 64 & 2.889{\rm e- }07  &    & 64 & 9.813{\rm e- }06 \\
 \hline
\end{array}
\]
\caption{\label{tab:fpe}Estimated $L^2-$error for the solution of  fractional-order evolution equations at different time snapshots. }
\end{table}

\clearpage
\section*{Conclusions}
We have introduced and analyzed a quadrature method designed for integrals exhibiting both oscillatory behavior and exponential decay. Our approach employs Chebyshev-based interpolation to achieve efficient and accurate numerical integration. Additionally, we have developed an efficient algorithm for this rule whose computational cost remains largely independent of both the oscillatory behavior and exponential decay. Finally, we have established the method’s stability against round-off errors.

Future works may extend our {analysis} to broader classes of weight functions and investigate applications in high-dimensional quadrature problems

\paragraph{Acknowledgments}
{The author thanks Mahadevan Ganesh for the insightful discussions that motivated this work and for introducing him to the topic of evolution problems involving fractional derivatives. The author is also grateful to the referees of this manuscript for their careful reading, which helped to correct several errors, and for their suggestions, which simplified some arguments, improved the exposition, and enhanced the overall quality of the work.}

\paragraph{Funding} The author acknowledges the support of the projects ``Adquisición de cono\-ci\-mien\-to y
minería de datos, funciones especiales y métodos numéricos avanzados'' from Universidad Pública de Navarra, Spain, and ``Técnicas innovadoras para la resolución de problemas evolutivos'', ref. PID2022-136441NB-I00, from the Ministerio de Ciencia e Innovación, Gobierno de España.

\appendix
\end{document}